\documentclass[11pt]{amsart}
\usepackage{amssymb}
\usepackage{amsfonts}
\usepackage{amsmath}
\usepackage{graphicx}
\usepackage{xcolor}
\usepackage{amsmath}
\usepackage{mathrsfs}
\usepackage{stmaryrd}
\usepackage{epsfig,color}
\usepackage{blindtext}
\usepackage{enumerate}
\usepackage{hyperref}
\usepackage{url}
\usepackage{bbm}
\usepackage{nicefrac,mathtools}
\usepackage{bm}   
\usepackage{tabularx}
\DeclareGraphicsExtensions{.pdf,.jpeg,.png}
\usepackage{epstopdf}
\usepackage{cancel} 
\usepackage[normalem]{ulem} 
\usepackage{verbatim} 
\usepackage{scalerel}
\usepackage{enumitem}
\newcounter{dummy}
\makeatletter
\newcommand\myitem[1][]{\item[#1]\refstepcounter{dummy}\def\@currentlabel{#1}}
\makeatother

\usepackage{subcaption}
\usepackage{soul}

\usepackage{color}
\usepackage[msc-links, lite]{amsrefs}
\usepackage{geometry}
\usepackage{stackengine,scalerel}
\usepackage{tikz-cd}

\newtheorem{theorem}{Theorem}[section]

\newtheorem{conjecture}[theorem]{Conjecture}
\newtheorem{proposition}[theorem]{Proposition}
\newtheorem{lemma}[theorem]{Lemma}
\newtheorem{corollary}[theorem]{Corollary}
\newtheorem{question}[theorem]{Question}

\newtheorem{claim}[]{Claim}
\newtheorem*{acknowledgements}{Acknowledgements}
\theoremstyle{definition}
\newtheorem{definition}[theorem]{Definition}
\theoremstyle{remark}
\newtheorem{remark}[theorem]{Remark}
\newtheorem{example}[]{Example}

\numberwithin{equation}{section}

\newcommand{\mf}{\mathbf}
\newcommand{\mb}{\mathbb}
\newcommand{\mc}{\mathcal}

\newcommand{\mk}{\mathfrak}

\newcommand{\wti}{\widetilde}

\newcommand{\Vol}{\mathrm{Vol}}
\newcommand{\Area}{\mathrm{Area}}
\newcommand{\Id}{\mathrm{Id}}

\newcommand{\inj}{\operatorname{inj}}
\newcommand{\codim}{\operatorname{codim}}
\newcommand{\bd}{\partial}

\newcommand{\rom}[1]{\expandafter\romannumeral #1}
\newcommand{\Rom}[1]{\uppercase\expandafter{\romannumeral #1}}

\DeclareMathOperator{\Ric}{Ric}

\DeclareMathOperator{\spt}{spt}
\DeclareMathOperator{\interior}{int}
\DeclareMathOperator{\closure}{Clos}
\DeclareMathOperator{\Clos}{Clos}

\DeclareMathOperator{\divergence}{div}

\newcommand{\red}{\color{red}}
\newcommand{\blue}{\color{blue}}

\title{Infinite existence of equivariant minimal hypersurfaces}

\author{Xingzhe Li}
\address{Cornell University, Department of Mathematics, Ithaca, New York 14850}
\email{xl833@cornell.edu}

\author{Tongrui Wang}
\address{School of Mathematical Sciences, Shanghai Jiao Tong University, 800 Dongchuan RD, Minhang District, Shanghai, 200240, China}
\email{wangtongrui@sjtu.edu.cn}

\begin{document}
\maketitle

\begin{abstract}
    For a closed Riemannian manifold $M$ with a compact Lie group $G$ acting by isometries, we show that there are infinitely many $G$-invariant minimal hypersurfaces. Under the assumption that $M$ contains at most a finite number of minimal $G$-hypersurfaces admitting no $G$-invariant unit normal, we further show that each $G$-homology class of $M$ admits infinitely many distinct realizations by embedded minimal $G$-hypersurfaces. The proof relies on a new algorithm that employs multi-stage maximal cuttings. As part of this work, we also established an equivariant min-max theory in manifolds with cylindrical ends.
\end{abstract}

\section{Introduction}

\subsection{Background and motivations.}


Minimal hypersurfaces arise as critical points of the area functional and enjoy nice variational properties. Their existence has been extensively studied for centuries, yet remains central to the field. A fundamental advancement comes from the works of Almgren \cite{almgren1962homotopy, almgren1965theory} in the 60s, where he initiated the min-max theory for the area functional. With subsequent refinements made by Pitts \cite{pitts2014existence} and the regularity theory of Schoen-Simon \cite{schoen1981regularity}, this min-max framework yields the existence of a smoothly embedded closed minimal hypersurface in a compact manifold $(M^{n + 1}, g)$ with $3 \leq n + 1 \leq 7$ (see \cite{smith1982minmax, colding2002min-max, de2013existence} for other variants of min-max theory). Inspired by the Morse theoretic heuristics, Yau proposed the following conjecture in his famous 1982 Problem Section \cite[Problem 88]{yau1982problem}.  



\begin{conjecture}[S.-T. Yau \cite{yau1982problem}]\label{Conj: Yau's Conjecture}
    Every closed three-dimensional manifold contains infinitely many (immersed) minimal surfaces.
\end{conjecture} 

For generic metrics, a stronger result establishing the dense distribution of minimal hypersurfaces
was obtained by Irie-Marques-Neves \cite{irie2018density} via a perturbation argument. Later, an alternative proof of Conjecture \ref{Conj: Yau's Conjecture} was provided by X. Zhou in \cite{zhou2020multiplicity}, where he resolved the \emph{Multiplicity One Conjecture} raised by Marques-Neves \cite{marques2021morse} in the Almgren-Pitts setting; see also the proof by Chodosh-Mantoulidis \cite{chodosh2020ACmultiplicity} using the Allen-Cahn approach in dimension three. For subsequent progress related to Conjecture \ref{Conj: Yau's Conjecture} in the generic case, we refer to \cite{marques2019equidistribution, liyy2023infinite, Song19, Song-Zhou21, LS23, li2024generic}, etc. 

In parallel with these developments, the conjecture for non-generic metrics was addressed with different strategies. When the manifold satisfies the ``Frankel property'', it was solved in dimension $3 \leq n + 1 \leq 7$ by Marques-Neves \cite{marques2017existence} using the asymptotic behavior of the volume spectrum from Gromov \cite{gromov1988width, gromov2003waists} and Guth \cite{guth2009minimax}. Built upon the work of Marques-Neves, A. Song \cite{song2023infinite} settled the conjecture in full generality by localizing the min-max constructions that appeared in \cite{marques2017existence} to some compact manifold with stable minimal boundary. In the non-generic case, some further works inspired by Song's approach include \cite{Song19, wangzc2020existence, sunao24}, etc.  

From Song's resolution of Conjecture \ref{Conj: Yau's Conjecture}, we see that there exists at least one homology class $\alpha\in H_n(M;\mb Z_2)$ that admits infinitely many distinct realizations by embedded minimal hypersurfaces. Throughout this paper, we use the term `homology' to refer to the homology with $\mb Z_2$-coefficients for simplicity. In particular, it follows from Zhou's resolution of the multiplicity one conjecture \cite{zhou2020multiplicity} that $\alpha=[0]$ is such a class under bumpy metrics or metrics of $\Ric_M>0$. When $H_n(M; \mathbb{Z}_2)$ has a nontrivial class, it is interesting to know whether the number of embedded minimal hypersurfaces that are not $2$-sided separating is infinite. We phrase this as the following conjecture.  

\begin{conjecture}\label{Conj: Infinitely many homologically nontrivial} 
Suppose $3 \leq n + 1 \leq 7$ and $H_n(M^{n+1};\mb Z_2)$ is nontrivial. Then there are infinitely many homologically nontrivial closed embedded minimal hypersurfaces. 
\end{conjecture}

Now we turn to the equivariant setting and consider analogous questions. The ambient space of interest will be a closed Riemannian manifold $M$ equipped with a compact Lie group $G$ acting by isometries satisfying that 
\begin{align*}
    3 \leq \codim(G\cdot x) \leq 7 \qquad \forall x\in M. \tag{$\star$}
\end{align*}
With this setup, we propose below an equivariant analogue of Yau’s conjecture.

\begin{conjecture}\label{Conj: Infinitely many G-invariant} 
    $M$ contains infinitely many closed embedded minimal $G$-hypersurfaces. 
\end{conjecture}

Here we summarize known results to Conjecture \ref{Conj: Infinitely many G-invariant}. Under the assumption that $\Ric_{M} > 0$, this is confirmed by the second author in \cite{wang2022min} 
using the \emph{equivariant Almgren-Pitts min-max theory} developed by him \cite{wang2022min, wang2023free, wang2023G-index}. For more results on equivariant min-max in various settings, we refer to  
\cite{pitts1987applications, pitts1988equivariant, ketover2016equivariant, liu2021existence, franz2021equivariant, wang2025density}. 
\medskip 

By further prescribing the $G$-homology class, we are led to the following stronger conjecture. Here and below, the prefix `$G$-' represents the $G$-invariance for short.  

\begin{conjecture}\label{Conj: Equivariant Conjecture} 
    $M$ contains infinitely many closed embedded minimal $G$-hypersurfaces $\Sigma$ in a given $G$-homology class, i.e. $\Sigma = \Sigma_0 + \partial \Omega$ as mod 2 cycles for a fixed $G$-hypersurface $\Sigma_0 \subset M$ and some open $G$-set $\Omega \subset M$.
\end{conjecture}

\begin{remark}
    Note that when $G = \{e\}$, the above conjecture reduces to the following: There are infinitely many closed embedded minimal hypersurfaces in a given homology class $[\Sigma_0] \in H_n(M^{n+1}; \mathbb{Z}_2)$.
\end{remark}



Recall that a $G$-invariant metric is called $G$-bumpy if every finite cover of a smooth embedded minimal $G$-hypersurface is non-degenerate. Recently, the second author \cite{wang2026homology} proved Conjecture \ref{Conj: Equivariant Conjecture} for $G$-bumpy metrics and for local $G$-boundaries $\Sigma_0$ by verifying a multiplicity one result in the equivariant min-max setting. The notion of local $G$-boundary is a natural generalization of $G$-boundaries and is defined in Section \ref{section: statement of the main results}. This, for instance, includes non-boundary-type hypersurfaces that appeared in \cite{lwy24} where X. Yao and the authors showed a multiple existence result for $1$-sided minimal surfaces with low genus in lens spaces. We also refer to \cite{wangzc2023FourSpheres} for the multiplicity one theorem in Simon-Smith min-max theory and its application to another Yau’s conjecture on minimal $2$-spheres in $S^3$.    

The main objective of the present work is to further investigate Conjectures \ref{Conj: Infinitely many homologically nontrivial},  \ref{Conj: Infinitely many G-invariant}, and  \ref{Conj: Equivariant Conjecture} in the general case where the metric may not be generic or with positive Ricci curvature.

\subsection{Statement of the main results.}\label{section: statement of the main results}  

In this paper, we will investigate how the intrinsic nature of the space of local $G$-boundaries $\mc {LB}^G(M;\mb Z_2)$ ensures the infinite existence of $G$-invariant minimal hypersurfaces. Recall that a closed $G$-invariant hypersurface $\Sigma_0$ is a \emph{local $G$-boundary} if for any $p \in M$, there is a small $G$-invariant neighborhood $U$ of $G \cdot p$ so that $\Sigma_0$ is the boundary of an open $G$-set $\Omega$ restricted in $U$. Two local $G$-boundaries $\Sigma_0$ and $\Sigma_0'$ will stay in the same $G$-homology class if they differ by an open $G$-set. Then we denote by $\mathcal{H}^G(M; \mathbb{Z}_2)$ the collection of all $G$-homology classes in $\mc {LB}^G(M;\mb Z_2)$, which carries a natural structure of a finite group. Given $\alpha\in \mc H^G(M;\mb Z_2)$, an embedded minimal $G$-hypersurface is said to be a {\em minimal realization} of $\alpha$ if it represents $\alpha$ as a local $G$-boundary. 

Given $\Sigma \subset M$ a closed $G$-connected $G$-hypersurface, we say $\Sigma$ is {\em $(G, 1)$-sided} if $\Sigma$ admits no $G$-invariant unit normal. 
This is a natural generalized notion since for trivial $G = \{e\}$, $G$-connected $(G, 1)$-sided hypersurfaces coincide with connected $1$-sided hypersurfaces. Our main result confirms Conjecture \ref{Conj: Equivariant Conjecture} provided that $M$ contains at most a finite number of $(G, 1)$-sided minimal hypersurfaces: 

\begin{theorem}\label{Thm: main}  
    Let $(M^{n+1},g_{_M})$ be a closed Riemannian manifold with a compact Lie group $G$ acting by isometries so that 
    \[3 \leq \codim(G \cdot p) \leq 7, \quad \forall p \in M. \tag{$\star$}\]
	Then 
	either
	\begin{itemize}
		\item[(i)] there are infinitely many closed embedded, $G$-connected, $(G,1)$-sided minimal hypersurfaces; or
		\item[(ii)] for any $\alpha\in \mc H^G(M;\mb Z_2)$, there are infinitely many minimal realizations of $\alpha$. 
	\end{itemize}
\end{theorem}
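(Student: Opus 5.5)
We argue by contradiction, adapting Song's localization strategy to the equivariant setting. Assume (i) fails, so there are only finitely many $G$-connected $(G,1)$-sided minimal hypersurfaces, and suppose some class $\alpha\in\mc H^G(M;\mb Z_2)$ has only finitely many minimal realizations. First we show that in fact only finitely many minimal $G$-hypersurfaces are relevant. Each $(G,2)$-sided $G$-connected minimal hypersurface $\Sigma$ can be added to a realization $\Sigma'$ of $\alpha$ whenever $\Sigma$ is a $G$-boundary, or more generally when $\Sigma$ together with $\Sigma'$ stays in class $\alpha$. Since $\mc H^G$ is finite, a pigeonhole argument then shows that finitely many realizations of $\alpha$ force finitely many minimal $G$-hypersurfaces whose classes lie in the subgroup generated by such sums. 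We still need a realization of $\alpha$ to exist at all. For this we minimize area in the $G$-homology class $\alpha$ among local $G$-boundaries, using equivariant geometric measure theory and the regularity guaranteed by $(\star)$.

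Next we perform the cutting. Take the finitely many minimal $G$-hypersurfaces, and in particular the area-minimizing and stable ones. Following Song, we cut $M$ along a maximal disjoint family of $G$-invariant stable minimal hypersurfaces that are $(G,2)$-sided or arise as double covers of $(G,1)$-sided ones. This produces a compact $G$-manifold $N$ with stable minimal boundary in which every interior minimal $G$-hypersurface is unstable or otherwise controlled. This is where the "multi-stage" cutting is needed. After one cut, new stable $G$-hypersurfaces can appear in the pieces, for instance lifts or boundaries of tubular neighborhoods of $(G,1)$-sided surfaces. We therefore iterate the cutting, using the finiteness assumptions to guarantee that the process terminates. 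We then glue cylindrical ends $\partial N\times[0,\infty)$ to obtain a $G$-manifold $\mc C(N)$.

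In $\mc C(N)$ we apply the equivariant min-max theory in manifolds with cylindrical ends announced in the abstract. The equivariant Gromov–Guth growth gives widths $\omega_p^G(\mc C(N))\sim p^{1/(\ell+1)}$, where $\ell=\max\codim(G\cdot p)-1$, with the upper bound sublinear. Following Song, the widths must then be strictly increasing infinitely often. The min-max theory yields minimal $G$-hypersurfaces lying in the compact part of $N$, so infinitely many distinct areas occur, which contradicts finiteness. To return to the class $\alpha$, we run the sweepouts inside the space of local $G$-boundaries in class $\alpha$ relative to the fixed realization. This means using $\alpha$-twisted cycles of the form $\Sigma_0+\partial\Omega$, so that every min-max limit is a realization of $\alpha$ after adding boundary pieces. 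Finally, minimal pieces produced with multiplicity or $(G,1)$-sided components are absorbed: only finitely many $(G,1)$-sided components exist, and even multiplicities do not change the class.

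The main obstacle is the last point, together with the cutting. We must ensure that the min-max hypersurfaces actually realize $\alpha$ and are genuinely new, rather than multiples of boundary components or one of the finitely many $(G,1)$-sided surfaces. For this we would first try a Frankel-type property in the cut manifold $N$: every minimal $G$-hypersurface in the interior intersects every other one. Such a property would force the supports to be connected, and combined with the finiteness from the first step it would give the contradiction.
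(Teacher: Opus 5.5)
There are genuine gaps. Your proposal is essentially Song's argument under the hypothesis ``finitely many minimal hypersurfaces in total'', and the new difficulties of this theorem sit exactly in the steps you leave unjustified.

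\textbf{The opening reduction and the termination of the cutting.} Your opening reduction fails. A $(G,2)$-sided minimal $\Sigma$ can be added to a realization $\Sigma'$ of $\alpha$ only if $\Sigma\cap\Sigma'=\emptyset$, since otherwise $\Sigma\cup\Sigma'$ is not embedded. Moreover, ``staying in class $\alpha$'' just means $[\Sigma]^G=[0]^G$, so your ``more generally'' clause adds nothing. Nothing controls $(G,2)$-sided non-$G$-separating hypersurfaces in nonzero classes, which may all intersect each other and every realization of $\alpha$. So you never obtain global finiteness. Consequently your cutting along a ``maximal disjoint family'' has no reason to terminate: the only finiteness hypotheses concern $(G,1)$-sided hypersurfaces and realizations of $\alpha$.

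The paper manufactures disjointness by cutting. After cutting along $\Gamma$, everything in the interior of the new piece misses $\Gamma$. A later hypersurface in the same class $\beta$ therefore combines with $\Gamma$ into a realization of $[0]^G$, or, together with the tracked pieces, into a new realization of $\alpha$. Termination is proved in two ways:
\begin{itemize}
\item by following the finite set $\mc B_i\subset\wti{\mc H}^G(M;\mb Z_2)$ of classes of non-expanding hypersurfaces and showing each class is eliminated after finitely many cuts (Lemma \ref{Lem: intersect G1-sided} handles the $(G,1)$-sided ones);
\item in Step 2, by the strictly growing set $\mc W$ of distinct realizations of $\alpha$.
\end{itemize}
Crucially, the paper first proves the case $[0]^G$ separately (Theorem \ref{Thm: infinite in trivial G-homology class}) and then uses it as a black box. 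Whenever the residual class becomes $[0]^G$, or a surviving realization is $(G,2)$-sided and hence $G$-separating, the sets $\Sigma\triangle\Gamma$ give infinitely many realizations of $\alpha$.

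\textbf{Homology bookkeeping and the final contradiction.} Both are also missing. Cutting along all stable hypersurfaces can cut away every realization of $\alpha$, and ``sweepouts in class $\alpha$ relative to a fixed realization'' is not defined in the cut manifold. The paper proceeds as follows:
\begin{itemize}
\item it starts from the area minimizer of $\alpha$ (Lemma \ref{Lem: minimizer representative});
\item it records $\alpha=\alpha_i+[\Sigma_i]^G$, where $\Sigma_i$ has been cut off, together with the relative class $\wti\alpha_i$ in $N_i$;
\item it proves (Claim \ref{Claim: finite E}) that each closed minimal realization of $\wti\alpha_i$ in $\interior(N_i)$ pairs with $\Sigma_i\triangle\Gamma'$ to give a realization of $\alpha$;
\item it cuts only along non-expanding hypersurfaces disjoint from a tracked realization.
\end{itemize}
Hence the full Frankel property you hope for is never obtained. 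Only the restricted Claim \ref{Claim: G-Frankel} holds, and it is exactly what forces the min-max support to be a single element of the finite set $\mc E_2$.

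Your growth claim is also wrong when $\partial N\neq\emptyset$. On $Cyl(N)$ the widths grow linearly:
\begin{itemize}
\item $p\Area(\Sigma_1)\le\omega_p\le p\Area(\Sigma_1)+Cp^{1/(l+1)}$;
\item $\omega_{p+1}\ge\omega_p+\Area(\Sigma_1)$ (Theorem \ref{Thm: width estimates in cylindrical mfd}, where $l+1$ is the \emph{minimal} orbit codimension, not the maximal one).
\end{itemize}
The sublinear Weyl law enters only when the core is all of $M$, through the Marques--Neves argument. Strict monotonicity of the widths gives nothing by itself, because multiplicities can grow. The actual contradiction is Song's arithmetic Lemma 14, which needs three ingredients:
\begin{itemize}
\item $\omega_p=m_p\Area(\wti\Sigma_p)$ with $\wti\Sigma_p$ drawn from a finite list;
\item the area bounds from Song's Lemma 13, namely $\Area>\Area(\widehat\Sigma_1)$, or $2\Area>\Area(\widehat\Sigma_1)$ for $(G,1)$-sided pieces;
\item parity control.
\end{itemize}
Parity comes from two sources. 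In the separating case, $(G,1)$-sided components have even multiplicity (Theorem \ref{Thm: confined min-max hypersurface}(i)). In Step 5, the paper reduces to the case where every element of $\mc E_2$ is $(G,1)$-sided, so that $\Gamma_2^e$ is non-$G$-separating; Theorem \ref{Thm: confined min-max hypersurface}(ii) then shows the support realizes $\wti\alpha_2$ with odd multiplicity. Your remark that $(G,1)$-sided pieces are ``absorbed'' because there are finitely many of them produces no contradiction on its own.
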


In particular, by forgetting about the $G$-homology class constraint, we readily obtain an affirmative answer to Conjecture \ref{Conj: Infinitely many G-invariant}:   

\begin{theorem}\label{Thm: infinitely many G-invariant} 
    Let $(M,g_{_M})$ and $G$ be as above. Then there are infinitely many closed embedded minimal $G$-hypersurfaces in $M$.  
\end{theorem}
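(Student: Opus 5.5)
This is a direct corollary of Theorem \ref{Thm: main}: I will show that in either alternative of that dichotomy we obtain infinitely many closed embedded minimal $G$-hypersurfaces. The setting $(M,g_{_M})$, $G$ with $(\star)$ is exactly the hypothesis of Theorem \ref{Thm: main}, so we may apply it verbatim.

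\emph{Case (i).} Suppose alternative (i) of Theorem \ref{Thm: main} holds. Then there are infinitely many closed embedded, $G$-connected, $(G,1)$-sided minimal hypersurfaces. Each such hypersurface is in particular $G$-invariant: $(G,1)$-sidedness is only defined for closed $G$-connected $G$-hypersurfaces. Hence each one is a closed embedded minimal $G$-hypersurface, and these are pairwise distinct by assumption. This gives the conclusion in this case.

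\emph{Case (ii).} Suppose alternative (ii) holds. Then every class $\alpha\in\mathcal H^G(M;\mathbb Z_2)$ has infinitely many minimal realizations. It therefore suffices to know that $\mathcal H^G(M;\mathbb Z_2)$ is nonempty. This is clear because it is a group: the trivial class $0$, represented by the empty hypersurface, or equivalently by $\partial\Omega$ for an open $G$-set $\Omega$, always belongs to $\mathcal{LB}^G(M;\mathbb Z_2)$.

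Applying (ii) to $\alpha=0$ yields infinitely many distinct embedded minimal $G$-hypersurfaces representing $0$ as local $G$-boundaries. Only one realization can be empty, since distinct realizations are distinct sets. Discarding it, we are left with infinitely many nonempty ones. Each is closed, because it is a local $G$-boundary, which by definition is a closed $G$-invariant hypersurface.

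The only point needing any care is this last one: confirming that "minimal realization" refers to closed embedded minimal $G$-hypersurfaces as in the definition preceding Theorem \ref{Thm: main}, and that discarding the possible empty realization still leaves infinitely many. There is no genuine obstacle; all the difficulty is contained in Theorem \ref{Thm: main}.
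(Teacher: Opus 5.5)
Your proof is correct and is the paper's argument: the paper obtains this statement from Theorem \ref{Thm: main} by ``forgetting the $G$-homology class constraint.'' Alternative (i) directly gives infinitely many minimal $G$-hypersurfaces, and alternative (ii) applied to $[0]^G$ gives infinitely many distinct closed minimal realizations, which are minimal $G$-hypersurfaces; discarding the possibly empty realization, as you do, is the only bookkeeping needed.
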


The following result is a direct consequence of Theorem \ref{Thm: main}.   

\begin{corollary}\label{main cor: H^G=Z_2}
    Let $(M, g_{_M})$ and $G$ be as above. Suppose that the only nontrivial element of $\mathcal{H}^G(M;\mb Z_2)$ is $\alpha$. Then there are infinitely many minimal realizations of $\alpha$. 
\end{corollary}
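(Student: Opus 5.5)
The plan is to apply Theorem \ref{Thm: main} and rule out alternative (i) in the special situation where $\alpha$ is the unique nontrivial element of $\mathcal{H}^G(M;\mb Z_2)$. The dichotomy says that either there are infinitely many $G$-connected $(G,1)$-sided minimal hypersurfaces, or every class in $\mathcal{H}^G(M;\mb Z_2)$ has infinitely many minimal realizations. In the second case we are done immediately by taking the class $\alpha$. So it suffices to treat the first case, and show that it also produces infinitely many minimal realizations of $\alpha$.

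Suppose (i) holds, and let $\{\Sigma_j\}$ be infinitely many distinct closed embedded, $G$-connected, $(G,1)$-sided minimal hypersurfaces. The key step is to check that each $\Sigma_j$ is a local $G$-boundary that represents a \emph{nontrivial} element of $\mathcal{H}^G(M;\mb Z_2)$.

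\textbf{Local $G$-boundary.} Fix $p\in M$ and a small $G$-tubular neighborhood $U$ of $G\cdot p$. I would show that $\Sigma_j\cap U$ bounds an open $G$-set in $U$. When $G\cdot p\cap\Sigma_j=\emptyset$ this is trivial. Otherwise, by the slice theorem, $U\cong G\times_{G_p} B$ with $B$ a ball in the normal slice. Since $\Sigma_j$ is embedded and $G$-invariant, $\Sigma_j\cap B$ is a $G_p$-invariant hypersurface of $B$ that separates the simply connected ball $B$ locally. The candidate is the $G$-saturation of one side; one must check this side is $G_p$-invariant. I expect this is either part of the paper's setup or follows from the definition of local $G$-boundary being designed to include such surfaces. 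If it is not automatic, the fallback is to argue with the local $G$-boundary notion as the paper defines it.

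\textbf{Nontriviality.} If $\Sigma_j=\partial\Omega$ for an open $G$-set $\Omega$, then the outward unit normal of $\Omega$ would be a $G$-invariant unit normal along $\Sigma_j$. This contradicts $(G,1)$-sidedness. Hence $[\Sigma_j]\neq 0$ in $\mathcal{H}^G(M;\mb Z_2)$, and by hypothesis $[\Sigma_j]=\alpha$. So every $\Sigma_j$ is a minimal realization of $\alpha$, and there are infinitely many of them.

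I expect the only delicate point to be the verification that each $(G,1)$-sided minimal hypersurface is a local $G$-boundary, so that it defines a class in $\mathcal{H}^G(M;\mb Z_2)$ at all. Everything else is a direct case split on Theorem \ref{Thm: main}.
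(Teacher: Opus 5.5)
The paper gives no argument beyond calling this a direct consequence of Theorem \ref{Thm: main}, so your case split is the intended route. Your nontriviality step is also correct: a $(G,1)$-sided $\Sigma$ cannot lie in $[0]^G$. The gap is the step you flagged, and it is not merely unproved but false in general. A $(G,1)$-sided minimal $G$-hypersurface need not be a local $G$-boundary, so it need not represent any element of $\mathcal{H}^G(M;\mathbb{Z}_2)$ at all. Your own slice picture shows why. In $B_r(G\cdot p)\cong G\times_{G_p}B$, the cycle $\Sigma\cap B$ bounds a region $\Omega_B\subset B$ that is unique up to complement, so each $g\in G_p$ either fixes $\Omega_B$ or swaps it with $B\setminus\Omega_B$. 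If $p\notin\Sigma$, $g$ fixes $\Omega_B$, since $g$ fixes $p$. If $p\in\Sigma$, $g$ fixes $\Omega_B$ if and only if $dg_p$ preserves the unit normal of $\Sigma$ at $p$. Hence $\llbracket\Sigma\rrbracket\in\mathcal{LB}^G(M;\mathbb{Z}_2)$ if and only if every isotropy group $G_p$, $p\in\Sigma$, preserves the normal at $p$. $(G,1)$-sidedness gives no control over this; the paper itself treats $\llbracket\Sigma\rrbracket\in\mathcal{LB}^G$ as an extra assumption in the remark following Definition \ref{Def: equivariant separating}.

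Here is a concrete failure. Take $M=S^3\times S^1$ with the product metric and $G=\mathbb{Z}_2$ generated by $\sigma(x,\theta)=((-x_1,-x_2,-x_3,x_4),\theta)$, so all orbits have codimension $4$. For a unit vector $v=(v',0)$, the totally geodesic $\Sigma_v=\{v\cdot x=0\}\times S^1$ is connected, $\sigma$-invariant and $(G,1)$-sided, since $d\sigma(v)=-v$. These give infinitely many hypersurfaces as in alternative (i) of Theorem \ref{Thm: main}. However, $\Sigma_v$ contains the fixed circles $\{(0,0,0,\pm 1)\}\times S^1$, and in every geodesic ball centred at such a fixed point $\sigma$ swaps the two sides of $\Sigma_v$, so $\llbracket\Sigma_v\rrbracket\notin\mathcal{LB}^G(M;\mathbb{Z}_2)$. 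The same ball argument rules out type (II) classes, and one checks that $\mathcal{H}^G(M;\mathbb{Z}_2)=\{[0]^G,\alpha\}$ with $\alpha=[S^3\times\{\theta\}]^G$. So the corollary's hypothesis and alternative (i) both hold, yet no hypersurface supplied by (i) realizes $\alpha$. Here $\alpha$ does have infinitely many realizations, namely the slices $S^3\times\{\theta\}$, but your argument does not produce them.

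To close the gap you need one of two things. The first is a hypothesis forcing isotropy groups to act trivially on normal lines, for example $G$ acting freely or all $G_p$ connected; under it your proof is complete. The second is a version of Theorem \ref{Thm: main} whose alternative (i) yields $(G,1)$-sided hypersurfaces lying in $\mathcal{LB}^G$. That is not a formality. The proof of Theorem \ref{Thm: infinite in trivial G-homology class} uses finiteness of \emph{all} $(G,1)$-sided minimal $G$-hypersurfaces: once to get $\#\mathcal{B}_1<\infty$, and again in part (3) of the claim on the core before invoking Song's arithmetic lemma. Nothing there prevents the confined min-max limits from being supported, with even multiplicity, on $(G,1)$-sided hypersurfaces outside $\mathcal{LB}^G$.
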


The proof of Theorem \ref{Thm: main} builds on an equivariant version of the approach in \cite{song2023infinite}, but there are several challenges caused by our tighter constraints. We will describe those challenges and our new ideas as follows. 

In higher-dimensional closed manifolds ($n \geq 7)$, the minimal hypersurfaces produced by Almgren-Pitts min-max theory might admit a singular set of codimension at least $7$, and Song's cylindrical construction \cite{song2023infinite}*{Section 2.2} cannot be adapted to prove the infinite existence result. Fortunately, for $(M, g_M)$ with a $G$-action satisfying ($\star$), the minimal $G$-hypersurfaces produced by the equivariant Almgren-Pitts min-max theory are smooth. This motivates us to develop an equivariant version of Song's cylindrical construction (Section \ref{section: construction of approximations}) and establish an equivariant min-max theory in manifolds with cylindrical ends (Section 
\ref{section: equivariant min-max theory in manifolds with cylindrical ends}). In particular, we prove in Theorem \ref{Thm: width estimates in cylindrical mfd} the equivariant cylindrical Weyl law and prove in Theorem \ref{Thm: confined min-max hypersurface} the equivariant cylindrical min-max theorem with sweepouts in prescribed homology classes.       

Another challenge comes from the cutting procedure. Recall that in \cite{song2023infinite}, a key ingredient is to cut along all minimal hypersurfaces with non-expanding neighborhoods, thereby reducing the ambient manifold to a core manifold with boundary satisfying the Frankel property. In our setting, one major difference is that we only assume the minimal realizations of $\alpha$ are finite for a contradiction. This raises the following issues: 
\begin{enumerate} 
    \item whether the cutting terminates in finite time; 
    \item whether the cutting relates the homology class in the core to that in the original manifold.  
\end{enumerate}

To overcome these, we introduce a new algorithm that employs multi-stage ``maximal'' cuttings. The homological information is carefully recorded at each stage to ensure both (1) and (2). We refer to Section \ref{section: Overview of proof} for an outline of this procedure and how this leads to a proof of Theorem \ref{Thm: main}. 
\medskip 

We now list several classes of manifolds with isometric Lie group actions satisfying ($\star$) so that our main theorems apply.  


\begin{example}
    Let $N^p$ be a $p$-dimensional closed Riemannian manifold with $3 \leq p \leq 7$ and let $S^q$ be a $q$-dimensional round sphere. Consider $M = N^p \times S^q$ endowed with the product metric and $G = SO(q + 1)$ acting on 
    $N^p$ trivially and on $S^q$ by rotation. By Theorem \ref{Thm: infinitely many G-invariant}, we conclude that there are infinitely many $SO(q + 1)$-invariant minimal hypersurfaces in $N^p \times S^q$.   
\end{example}

\begin{example}
    Let $N^p$ be as above and let $E^{p+q} \rightarrow N^p$ be a nontrivial principal $G$-bundle. Consider $M := E$ endowed with the connection metric and $G$ acting fiberwise. By Theorem \ref{Thm: infinitely many G-invariant}, we conclude that there are infinitely many $G$-invariant minimal hypersurfaces in $E$.     
\end{example}

\begin{example}
    Let $N^p$ be as above and let $E \rightarrow N^p$ be a nontrivial real vector bundle of rank $(q + 1)$ over $N$. Consider $M := S(E)$ the sphere bundle over $N^p$ endowed with the Sasaki metric and $G = SO(q + 1)$ acting fiberwise on $S^q$ by rotation. By Theorem \ref{Thm: infinitely many G-invariant}, we conclude that there are infinitely many $SO(q + 1)$-invariant minimal hypersurfaces in $S(E)$.     
\end{example}


\begin{example}
    Let $S^4$ be parameterized by $\{(x,z_1,z_2)\in \mb R\times \mb C^2: x^2+|z_1|^2+|z_2|^2=1\}$, and $\lambda\in S^1$ act on $S^4$ by the diffeomorphism $\lambda\cdot (x,z_1,z_2):= (x, \lambda^pz_1, \lambda^qz_2)$, where $p,q\in\mb N$ with $(p,q)=1$. 
    Then for any $S^1$-invariant Riemannian metric on $S^4$, there exist infinitely many $S^1$-invariant minimal hypersurfaces. 
    In particular, if $p=q=1$, then $S^4/S^1$ is homeomorphic to an antipodal symmetric $S^3$ with two singular points so that the $S^1$-actions can be extended to the $G=(S^1\rtimes \mb Z_4)/\mb Z_2$ actions on $S^4$ with $S^4/G$ homeomorphic to $RP^3$ (cf. \cite{wwz26}). 
    By Corollary \ref{main cor: H^G=Z_2}, for any $G$-invariant metric on $S^4$, there are infinitely many embedded closed minimal $G$-hypersurfaces $\Sigma$ in $S^4$ so that $\Sigma/G$ is a non-orientable surface homologous to an $RP^2$ in $S^4/G$. 
\end{example}

As a special case of Theorem \ref{Thm: main}
when $G = \{e\}$, we have the following corollary.  

\begin{corollary}\label{Coro: main} 
	Let $(M^{n+1},g_{_M})$ be a closed Riemannian manifold of dimension $3 \leq n + 1 \leq 7$.  
	Then 
	either
	\begin{itemize}
		\item[(i)] there are infinitely many closed embedded, connected, $1$-sided minimal hypersurfaces; or
		\item[(ii)] for any $\alpha\in H_n(M;\mb Z_2)$, there are infinitely many minimal realizations of $\alpha$. 
	\end{itemize}
\end{corollary}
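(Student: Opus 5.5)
Corollary \ref{Coro: main} is obtained by specializing Theorem \ref{Thm: main} to the trivial group $G=\{e\}$. The work is in checking that every equivariant notion in Theorem \ref{Thm: main} reduces to its classical counterpart when $G$ is trivial; after that the dichotomy transfers verbatim.

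\textbf{The dimension hypothesis.} For $G=\{e\}$ every orbit $G\cdot p=\{p\}$ is a point, so $\codim(G\cdot p)=n+1$. Hence ($\star$) is exactly the assumption $3\le n+1\le 7$. Every isometric action of the trivial compact Lie group is admissible.

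\textbf{Alternative (i).} $G$-hypersurfaces are simply hypersurfaces, and $G$-connectedness is ordinary connectedness. A $G$-invariant unit normal is just a unit normal. So $(G,1)$-sided means admitting no global unit normal, which is $1$-sidedness, as already remarked before Theorem \ref{Thm: main}. Thus (i) of Theorem \ref{Thm: main} is (i) of the corollary.

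\textbf{Alternative (ii).} This is the only step needing an actual argument: identifying $\mc H^{\{e\}}(M;\mb Z_2)$ with $H_n(M;\mb Z_2)$, so that minimal realizations of $\alpha$ in the two senses agree.
\begin{enumerate}
\item A closed embedded hypersurface $\Sigma_0$ is always a local boundary. Each $p$ has a small ball neighborhood $U$ in which $\Sigma_0$ is either empty or a single properly embedded disk. That disk separates $U$, so it bounds an open set there.
\item Two hypersurfaces lie in the same class of $\mc H^{\{e\}}$ exactly when they differ by $\partial\Omega$ as mod $2$ cycles. This is precisely $\mb Z_2$-homologousness, since every class in $H_n(M;\mb Z_2)$ of a closed manifold is representable by a closed embedded hypersurface, and two such representatives are homologous iff their sum bounds an open set.
\item Consequently the natural map $\mc H^{\{e\}}(M;\mb Z_2)\to H_n(M;\mb Z_2)$ is a bijection, and a minimal realization in the sense of Theorem \ref{Thm: main} is an embedded minimal hypersurface representing $\alpha$ in the usual sense.
\end{enumerate}
Statement (ii) of Theorem \ref{Thm: main} then becomes (ii) of the corollary. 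The main care point is surjectivity of the map in step 3, i.e. the standard fact that $\mb Z_2$-classes in codimension one are carried by embedded hypersurfaces. If needed, one can take the preimage of a regular value of a map to $\mb{RP}^\infty$ classifying the Poincaré dual class in $H^1(M;\mb Z_2)$.
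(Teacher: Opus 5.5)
Your proposal is correct and follows the paper's route: the paper obtains Corollary \ref{Coro: main} directly as the case $G=\{e\}$ of Theorem \ref{Thm: main}, using the same identifications you spell out (it notes that $\mc{LB}^{\{\Id\}}(M;\mb Z_2)=\mc Z_n(M;\mb Z_2)$, and that $G$-connected $(G,1)$-sided hypersurfaces are exactly connected $1$-sided ones when $G$ is trivial). One small imprecision is in your step 1: the definition of a local boundary uses balls of radius in $(\inj(p)/2,\inj(p))$, not arbitrarily small ones, but the conclusion still holds because such a geodesic ball is simply connected, so every properly embedded hypersurface in it bounds an open subset.
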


In particular, by forgetting about the homology class constraint, we readily obtain an affirmative answer to Conjecture \ref{Conj: Infinitely many homologically nontrivial}.  

\begin{corollary}\label{cor: confirm homologically nontrivial}
    Let $(M^{n+1},g_{_M})$ be a closed Riemannian manifold of dimension $3 \leq n + 1 \leq 7$ with nontrivial $H_{n}(M; \mathbb{Z}_2)$. Then there are infinitely many homologically nontrivial closed embedded minimal hypersurfaces.   
\end{corollary}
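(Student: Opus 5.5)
We deduce this from Corollary \ref{Coro: main}, i.e.\ Theorem \ref{Thm: main} with $G=\{e\}$, by examining its two alternatives. Fix a nontrivial class $\alpha\in H_n(M;\mb Z_2)$; one exists by hypothesis. For trivial $G$, every closed embedded hypersurface is a local boundary: near any point it separates a small ball into two sides. Hence $\mc H^{\{e\}}(M;\mb Z_2)$ is the set of mod $2$ homology classes realized by closed embedded hypersurfaces. Every class in $H_n(M;\mb Z_2)$ has such a representative, since in a manifold mod $2$ codimension-one classes are Poincaré dual to $H^1(M;\mb Z_2)=[M,\mathbb{RP}^\infty]$ and can be realized as transverse preimages of $\mathbb{RP}^{N-1}$. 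So $\alpha$ is a legitimate input to alternative (ii).

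\textbf{Case (ii).} Suppose alternative (ii) of Corollary \ref{Coro: main} holds. Then $\alpha$ has infinitely many distinct minimal realizations. Each of them is a closed embedded minimal hypersurface representing $\alpha\neq 0$, so each is homologically nontrivial, and we are done.

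\textbf{Case (i).} Suppose instead there are infinitely many closed embedded, connected, $1$-sided minimal hypersurfaces $\Sigma$. The task is to show each is homologically nontrivial in $\mb Z_2$-homology. Argue by contradiction: if $[\Sigma]=0$, then $\Sigma=\partial\Omega$ mod $2$ for some region $\Omega$. Then $M\setminus\Sigma$ splits into $\Omega$ and its complement, and the unit normal pointing into $\Omega$ is a globally defined continuous unit normal along $\Sigma$. This contradicts $1$-sidedness.

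Put more concretely, take a small loop $\gamma$ crossing $\Sigma$ exactly once transversally; such a loop exists because the normal bundle of $\Sigma$ is nontrivial, so one can go once around it. Then $[\gamma]\cdot[\Sigma]=1\in\mb Z_2$, so $[\Sigma]\neq0$. Hence infinitely many $1$-sided minimal hypersurfaces already give infinitely many homologically nontrivial ones, and the corollary follows in both cases.

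The only point requiring care is checking that the local-boundary formalism for $G=\{e\}$ matches ordinary $\mb Z_2$-homology, so that Corollary \ref{Coro: main}(ii) applies to an arbitrary nontrivial $\alpha$. Given the paper's remark that for trivial $G$ the conjecture reduces to ordinary homology classes, this is essentially immediate. The intersection-number argument for $1$-sided hypersurfaces is standard.
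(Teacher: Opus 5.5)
Your proposal is correct and matches the paper's argument, which it dismisses in one line: apply Corollary~\ref{Coro: main}. In alternative (ii), every minimal realization of a fixed nontrivial $\alpha$ is homologically nontrivial. In alternative (i), a $1$-sided hypersurface cannot bound, because bounding would give a global unit normal; this is the same reasoning as in Lemma~\ref{Lem: intersect G1-sided}. The identification of $\mc H^{\{e\}}(M;\mb Z_2)$ with $H_n(M;\mb Z_2)$ that you flag is not actually needed, since Corollary~\ref{Coro: main} is already stated for $\alpha\in H_n(M;\mb Z_2)$.
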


\subsection{Further questions}

Let us first restrict to the case when $G = \{e\}$. 
Note that given $\alpha \in H_n(M; \mathbb{Z}_2)$, our Definition \ref{Def: minimal realization} allows the minimal hypersurfaces in $\alpha$ to be disconnected. One may wonder what would happen if the connectedness is further imposed. This gives rise to the following question.  

\begin{question}\label{question: connected minimal surface} 
    Suppose $H_n(M^{n + 1}; \mathbb{Z}_2)$ is nontrivial. 
    Are there infinitely many closed embedded, connected, minimal hypersurfaces in a given homology class $\alpha \in H_n(M; \mathbb{Z}_2)$? 
\end{question} 

Even in dimension three, the question above is already quite interesting. For instance, one may consider the lens space $L(p, q)$ where $p$ is an even integer and $q \in [1, p)$ is an integer coprime to $p$. In the case that $0 \neq \alpha \in H_2(L(p, q); \mathbb{Z}_2) \cong \mathbb{Z}_2$, Question \ref{question: connected minimal surface} reduces to:  

\begin{question}\label{question: connected minimal surface for Lens spaces} 
    Given any Riemannian $L(p, q)$ with $p, q$ above, are there infinitely many connected $1$-sided embedded minimal surfaces?     
\end{question}


Note that Question \ref{question: connected minimal surface for Lens spaces} has been confirmed for metrics with positive Ricci curvature 
in \cite{wang2026homology} (see Theorem 1.4). For general metrics, our result asserts that either $\alpha \in H_2(L(p, q); \mathbb{Z}_2)$ admits infinitely many minimal realizations or there are infinitely many embedded, connected, $1$-sided minimal surfaces.  
\medskip

Likewise, in the case that $G$ is nontrivial, it is natural to ask the following question. 

\begin{question}
    Suppose $\mathcal{H}^{G}(M^{n + 1}; \mathbb{Z}_2)$ is nontrivial. Are there infinitely many closed embedded, $G$-connected, minimal $G$-hypersurfaces in a given $G$-homology class $\alpha \in \mathcal{H}^G(M; \mathbb{Z}_2)$? 
\end{question}

\subsection{Overview of proof.}\label{section: Overview of proof}  
For simplicity, let us focus on the proof of the non-equivariant version, namely the proof of Corollary \ref{Coro: main}. Assuming that (i) does not hold, we aim at showing that $\alpha \in H_n(M; \mathbb{Z}_2)$ admits infinitely many minimal realizations. Suppose by contradiction that there are finitely many such realizations.    

For the easier case $\alpha = [0]$, we 
adapt the cutting procedure by Song, i.e. for $i = 0, 1, \ldots$, we cut along a connected minimal hypersurface in the interior of $N_i\subset M$ with non-expanding neighborhood to get a new domain $N_{i + 1} \subset M$ whose boundary (may be empty) has a contracting neighborhood. A key observation is that by our contradiction assumption and the fact that $H_n(M; \mathbb{Z}_2)$ is finite, the maximal cuttings must terminate in finite times. This ends up giving a core $N_m$ whose interior contains finitely many $1$-sided or $2$-sided separating minimal hypersurfaces, each with an expanding neighborhood, which automatically satisfies the Frankel property for minimal hypersurfaces. If $N_m$ has no boundary i.e. $N_m = M$, we derive a contradiction by combining the proof of  \cite{marques2017existence}*{Theorem 6.1 and 1.1} with the non-linear growth rate of the equivariant volume spectrum (\cite{wang2025density}*{Theorem 1.5}). Otherwise, the contradiction follows from combining Song's arguments \cite{song2023infinite} with the equivariant cylindrical Weyl law \eqref{Eq: Weyl law} and the associated min-max theorem (Theorem \ref{Thm: confined min-max hypersurface}(i)).



Now we turn to the case $\alpha \neq [0]$. As we only assume finitely many minimal realizations of $\alpha$, it is crucial to keep track of the homology classes both in $M$ and in each domain $N_i$. Our new algorithm employs multi-stage ``maximal'' cuttings, with the homological information recorded at each stage. To be more specific, we first adapt the cutting procedure to the essential area minimizer $\Sigma$ of $\alpha$. \footnote{\,Here essential means that no proper union of components of $\Sigma$ lies in $[0]$.} Denote by $\Sigma_0$ the union of all its components with non-expanding neighborhoods. After cutting along $\Sigma_0$, we obtain a domain $N_0 \subset M$ whose boundary has a contracting neighborhood. We record $\alpha_0 = \alpha - [\Sigma_0]$ as the remaining homology class in $M$ and $\wti{\alpha}_0$ as the relative homology class in $N_0$. Note that $\wti{\alpha}_0$ admits finitely many closed minimal realizations in the interior of $N_0$: indeed any such $\Gamma$ can be paired with $\Sigma_0\triangle\Gamma' := (\Sigma_0 \setminus \Gamma') \cup (\Gamma' \setminus \Sigma_0)$ for a union of some components $\Gamma'$ of $\partial N_0$ to get a minimal realization of $\alpha$. If $\alpha_0 = [0]$, then we are back to the easier case and achieve a contradiction.         
 
Suppose $\alpha_0 \neq [0]$. We proceed to adapt the cutting procedure to all closed non-expanding minimal realizations of $\wti{\alpha}_0$ in $\interior(N_0)$. Specifically, for any such realization $\Gamma$, our cutting procedure provides a new intermediate domain $N_1'\subset N_0\subset M$ with a similar boundary property to $N_0$ so that any connected component of $\Gamma$ either lies in $\interior(N_1')$ with an expanding neighborhood or lies entirely in $N_0\setminus\interior(N_1')$. This allows us to record the portion of $\Gamma$ remaining within $\interior(N_1')$ as the updated homology class $\alpha_1'$ in $M$ and the relative class $\wti\alpha_1'$ in $N_1'$.
Again by our contradiction assumption and the fact that $H_n(M; \mathbb{Z}_2)$ is finite, the maximal cuttings must terminate in finite times: either we exhaust the homology classes of $H_n(M; \mathbb{Z}_2)$ and return to the easier case, or we exhaust minimal realizations with non-expanding neighborhoods. Assuming that the latter case occurs, this gives us a domain $N_1 \subset M$ whose boundary (may be empty) has a contracting neighborhood. 
We record $\alpha_1$ as the remaining homology class in $M$ and $\wti{\alpha}_1$ as the relative homology class in $N_1$. By the same reasoning as above, $\wti{\alpha}_1$ admits finitely many closed expanding minimal realizations in the interior of $N_1$.      

To turn $N_1$ into a core, we need to further cut along closed minimal hypersurfaces with non-expanding neighborhoods that are disjoint from an expanding minimal realization of $\wti{\alpha}_1$. Like before, the maximal cuttings must terminate in finite times. Assuming that we have exhausted minimal hypersurfaces with non-expanding neighborhoods, this ends up giving us a core $N_2$ whose boundary (may be empty) has a contracting neighborhood. We record $\alpha_2$ as the remaining homology class in $M$ and $\wti{\alpha}_2$ as the relative homology class in $N_2$. By construction, $\wti{\alpha}_2$ admits finitely many closed expanding minimal realizations in the interior of $N_2$. Moreover, any such minimal realization of $\wti \alpha_2$ is $1$-sided or $2$-sided separating in $N_2$, and satisfies a Frankel-type property: it must intersect any closed minimal hypersurfaces in the interior of $N_2$. If $N_2$ has no boundary i.e. $N_2 = M$, then $\wti \alpha_2 = \alpha \neq [0]$ and every min-max width $\omega_p(\wti{\alpha}_2)$ is achieved by the area of a connected closed minimal realization of $\wti\alpha_2$ in the interior of $N_2$ with integer multiplicity. Then we derive a contradiction by combining the proof of  \cite{marques2017existence}*{Theorem 6.1 and 1.1} with the non-linear growth rate of the equivariant volume spectrum (\cite{wang2025density}*{Theorem 1.5}). In the case that $N_2$ has a boundary, if a closed minimal realization of $\wti \alpha_2$ is $2$-sided separating in $N_2$, then we may argue by a pairing construction as before and get infinitely many minimal realizations of $\alpha$, which gives a contradiction. Hence, any closed minimal realization of $\wti \alpha_2$ in the interior of $N_2$ is $1$-sided, which implies that $\wti \alpha_2 \neq 0$. Then the contradiction follows from combining Song's arguments \cite{song2023infinite} with the equivariant cylindrical Weyl law \eqref{Eq: Weyl law} and the associated min-max theorem with sweepouts in a prescribed {non-trivial} homology class (Theorem \ref{Thm: confined min-max hypersurface}(ii)). 




\begin{acknowledgements}
	Part of this work was done when T.W. visited Professor Xin Zhou at Cornell University; he would like to thank them for their hospitality. 
    T.W. is supported by the National Natural Science Foundation of China 12501076, 
    and the Natural Science Foundation of Shanghai 25ZR1402252. 

    X.L. would like to thank his advisor Xin Zhou for his encouragement. X.L. is partially supported by Simons Dissertation Fellowship and NSF grant DMS-2243149.  
\end{acknowledgements}

\section{Preliminaries}

Let $(M^{n+1}, g_{_M})$ be a connected closed $(n+1)$-dimensional Riemannian manifold, and $G$ be a compact Lie group acting by isometries on $M$ so that for all $p\in M$,
\begin{align}\label{Eq: dimension assumption}
	3\leq \codim(G\cdot p)\leq 7.
\end{align} 
Let $l+1$ be the cohomogeneity of the $G$-action on $M$ defined by:
\begin{align}\label{Eq: cohomogeneity}
	l+1:=\min \{ \codim(G\cdot p) : p\in M \}. 
\end{align}
Without loss of generality, $M$ is assumed to be orientable (see \cite{bredon1972introduction}*{Chapter I, Corollary 9.4}) and the action of $G$ is assumed to be effective. 
Define $\mu$ to be the normalized bi-invariant Haar measure on $G$ with $\mu(G)=1$, and let $\pi: M\to M/G$ be the natural quotient map.

\subsection{Notations for manifolds and group actions}

In this subsection, we introduce some notations for the Lie group actions (c.f.                                              \cite{bredon1972introduction, wall2016differential}). 

Firstly, by \cite{moore1980equivariant}, there is an orthogonal representation $\rho:G\to O(L)$ and an isometric embedding $i:M \to \mb R^L$ so that $i(g\cdot x) = \rho(g)\cdot i(x) $. 
For simplicity, denote by $g\cdot p$ the acting of $g\in G$ on $p\in\mb R^L$. 
Given a subset (resp. submanifold, hypersurface,...) $A \subset M$, $A$ a said to be a $G$-set (resp. $G$-submanifold, $G$-hypersurface,...) if $G \cdot A = A$.  
We also use the following definitions.
\begin{definition}
	Let $U,V\subset M$ be $G$-invariant subsets. 
	Then $U$ is said to be {\em $G$-connected} if for any connected components $U_i,U_j$ of $U$, there exists $g_{i,j}\in G$ with $g_{i,j}\cdot U_j=U_i$. 
	Additionally, we say $U$ is a {\em $G$-component} of $V$, if $U$ is a $G$-connected union of components of $V$. 
\end{definition}

Given $p\in M\subset \mb R^L$, an open $G$-set $U\subset M$, and $r,s,t >0$, we use the following notations:
\begin{itemize}
    \item $B_r(p), \mb B^L_r(p)$: the geodesic $r$-ball in $M$ and the Euclidean $r$-ball in $\mb R^L$ respectively;
    \item $B_r(G\cdot p)$: the geodesic $r$-tube around $G\cdot p$ in $M$;
    \item $A_{s,t}(G\cdot p)$: the open annulus $B_t(G\cdot p)\setminus B_s(G\cdot p)$;
    \item $\closure(A)$: the closure of a set $A\subset \mb R^L$; 
    \item $T_q(G\cdot p)$: the tangent space of the orbit $G \cdot p$ at $q \in G \cdot p$;
    \item $N_q(G \cdot p)$: the normal vector space of the orbit $G \cdot p$ in $M$ at $q \in G \cdot p$;
    \item $N(G \cdot p)$: the normal vector bundle of the orbit $G \cdot p$ in $M$;
    \item $\exp_p,\exp^\perp_{G\cdot p}$: the exponential map in $M$ at $p$ and the normal exponential map at $G\cdot p$; 
    \item $\inj(p),\inj(G\cdot p)$: the injectivity radius of $\exp_p$ and $\exp^\perp_{G\cdot p}$ respectively;
    \item $\exp_\Sigma^\perp$: the normal exponential map at a hypersurface $\Sigma$;
    \item $\mk X(M), \mk X(U)$: the space smooth vector fields supported in $M$ or $U$;
    \item $\mk X^G(M):=\{X\in \mk X(M): dg(X)=X\mbox{ for all $g\in G$} \}$, and $\mk X^G(U):=\mk X(U)\cap \mk X^G(M)$. 
\end{itemize}
In certain contexts, we would also consider 
\begin{itemize}
    \item $G_\pm\subset G$: $G_+$ is an index $2$ Lie subgroup of $G$, and $G_-:=G\setminus G_+$; 
    \item $C^k_G(M):=\{f\in C^k(M): f(g\cdot p)=f(p)\mbox{ for all $g\in G$}\}$ the space of $G$-invariant functions;
    \item $C_{G_{ \pm}}^{k}(M):=\left\{f \in C^{k}(M): \begin{array}{ll}f(p)=f(g \cdot p), & \forall p \in M, g \in G_{+} \\ f(p)=-f(g \cdot p), & \forall p \in M, g \in G_{-}\end{array}\right\}$ the space of {\em $G_\pm$-signed-symmetric} functions,
\end{itemize}
where $k\in\mb N\cup\{\infty\}$. 

For $p\in M$, let $G_p := \{g\in G: g\cdot p = p\}$ be the {\em isotropy group} of $p$ in $G$. 
Then we say $G\cdot p$ has the {\em orbit type} $(G_p)$, where $(G_p)=\{g\cdot G_p\cdot g^{-1}:g\in G\}$ is the conjugate class of $G_p$ in $G$. 
Denote by $M_{(G_p)}:= \{q\in M: (G_q)=(G_p)\}$ the $(G_p)$ orbit type stratum. 
By \cite{bredon1972introduction}*{Chapter 6, Corollary 2.5}, $M_{(G_p)}$ is a disjoint union of smooth embedded submanifolds of $M$. 
Moreover, there is a (unique) minimal conjugate class of isotropy groups $(P)$, known as the {\em principal orbit type}, so that $M^{prin}:=M_{(P)}$ is an open dense submanifold of $M$.

\subsection{Notations for geometric measure theory}

We now collect some notations in geometric measure theory (c.f. \cite{simon1983lectures, pitts2014existence}*{\S 2.1}). 
Let $\mc H^k$ be the $k$-dimensional Hausdorff measure in $\mb R^L$. 
Then denote by $\mf I_k(M; \mb Z_2)$ the space of $k$-dimensional mod $2$ flat chains in $\mb R^L$ supported in $M$; $\mc C(M)$ (resp. $\mc C(U)$) the space of all Caccioppoli sets in $ M$ (resp. in $U\subset M$); $\mc V_k(M)$ the closure (in the weak topology) of the space of $k$-dimensional rectifiable varifolds in $\mb R^L$ supported in $M$; $\mc F$ and $\mf M$ the flat (semi-)norm and the mass norm on $\mf I_k(M;\mb Z_2)$ respectively; $\mf F$ the varifolds $\mf F$-metric on $\mc V_k(M)$ and the currents $\mf F$-metric on $\mf I_k(M;\mb Z_2)$ (\cite{pitts2014existence}*{2.1(19)(20)}).   
Denote by  
\[\mc Z_n(M; \mb Z_2):=\{T\in \mf I_n(M;\mb Z_2): \bd T=0\}. \]
Let $\llbracket \Sigma \rrbracket$ be the mod $2$ flat chain associated with a submanifold $\Sigma\subset M$, and $\bd\Omega$ be the element in $\mc Z_n(M; \mb Z_2)$ reduced by the boundary of $\Omega\in \mc C(M)$. 
Then, define
\[\mc B(M;\mb Z_2):=\{T\in\mc Z_n(M;\mb Z_2): T=\bd \Omega \mbox{ for some $\Omega\in \mc C(M)$}\}.\]

Given $T\in \mf I_k(M;\mb Z_2)$ and $V\in \mc V_k(M)$, let $|T|$ be the integer rectifiable varifold induced by $T$; let $\|T\|$ and $\|V\|$ be the Radon measure induced by $|T|$ and $V$ respectively. 
In addition, if $g_\# T=T$ (resp. $g_\#V=V$) for all $g\in G$, then we say $T$ (resp. $V$) is {\em $G$-invariant}. 
We then define the following subspaces of $G$-invariant objects with naturally induced metrics $\mc F,\mf F, \mf M$. 
\begin{itemize}
    \item $\mf I_k^G(M;\mb Z_2):=\{T\in \mf I_k(M;\mb Z_2): g_\#T=T,~\forall g\in G\}$;
    \item $\mc C^G(M):=\{\Omega\in\mc C(M): g\cdot \Omega=\Omega, ~\forall g\in G\}$, and $\mc C^G(U):=\mc C^G(M)\cap\mc C(U)$;
    \item $\mc Z_n^G(M;\mb Z_2):=\{T\in \mc Z_n(M;\mb Z_2): g_\#T=T,~\forall g\in G\}$;
    \item $\mc B^G(M;\mb Z_2):=\{T\in \mc B(M;\mb Z_2): T=\bd\Omega, {\rm ~for~some~}\Omega\in \mc C^G(M)\}$;
    \item $\mc V^G_n(M):=\{V\in\mc V_n(M): g_\#V=V,~\forall g\in G\}$.
\end{itemize}
We call the element in $\mc B^G(M;\mb Z_2)$ a {\em $G$-boundary}.
Moreover, we also consider the following space of {\em local $G$-boundaries}: 
\begin{align}\label{Eq: local boundary}
        \mc {LB}^G(M;\mb Z_2):=
        \left\{T\in \mc Z_n^G(M;\mb Z_2): 
        \begin{array}{ll}
        		\mbox{for all } p\in M, \mbox{ there exists } r\in (\inj(G\cdot p)/2,\inj(G\cdot p)) \\
        		\mbox{ and } \Omega\in \mc C^G(B_r(G\cdot p))
        		\mbox{ so that } T=\bd\Omega \mbox{ in } B_r(G\cdot p)
        \end{array}
        \right\},
\end{align}
Note that $\mc {LB}^{\{\Id\}}(M;\mb Z_2) = \mc Z_n(M;\mb Z_2)$ if $G=\{\Id\}$. 

Suppose $G_+<G$ is an index $2$ Lie subgroup of $G$ and $G_-=G\setminus G_+$. 
Then we say $\Omega\in\mc C(M)$ and $T=\bd\Omega \in \mc B(M;\mb Z_2)$ are {\em $G_\pm$-signed-symmetric} if $G_+\cdot \Omega=\Omega$ and $G_-\cdot\Omega=M\setminus \Omega$ in $\mc C(M)$. 
Denote by 
\begin{itemize}
	\item $\mc C^{G_\pm}(M):=\{\Omega\in \mc C(M): \mbox{$\Omega$ is $G_\pm$-signed-symmetric}\}$;
	\item $\mc B^{G_\pm}(M;\mb Z_2):=\{T\in\mc Z^G_n(M;\mb Z_2): T=\bd\Omega \mbox{ for some }\Omega\in\mc C^{G_\pm}(M)\}$.
\end{itemize}

\subsection{Equivariant hypersurfaces}

In a closed manifold $M^{n+1}$, it is well known that every (mod $2$) homology class $\alpha\in H_n(M;\mb Z_2)$ can be {\em realized by a hypersurface $\Sigma\subset M$}, i.e. $i_*[\Sigma]=\alpha$ for the embedding $i:\Sigma\to M$ (see \cite{Thom}*{Theorem II.26}). 
In addition, since $\pi_1(M)$ is finitely generated, we also know that $H_n(M;\mb Z_2)$ is a finite group by Poincar\'e  duality. 
Hence, it follows from Song's resolution of Yau's Conjecture \cite{song2023infinite} that there is at least one homology class $\alpha\in H_n(M;\mb Z_2)$, $3 \leq n + 1 \leq 7$, with infinitely many distinct realizations by embedded minimal hypersurfaces. 

In the equivariant case, the equivariant (co)homology was introduced by Borel and also by Bredon \cite{bredon1967equivariant}. 
However, it is not clear (to our best knowledge) whether the realization by $G$-submanifolds holds in these equivariant homology constructions. 
Nevertheless, we only need the following definitions to derive our results. 

\begin{definition}\label{Def: equivariant homology}
	Given $T\in \mc Z_n^G(M;\mb Z_2)$ we define the {\em $G$-homology class} of $T$ by   
	\[[T]^G:=\{S\in \mc Z_n^G(M;\mb Z_2) : S=  T +\bd\Omega, \mbox{ for some } \Omega \in \mc C^G(M)\}. \]
	Denote by $\wti {\mc H}^G(M;\mb Z_2)$ and $\mc H^G(M;\mb Z_2)$ the collections of all $G$-homology classes in $\mc Z_n^G(M;\mb Z_2)$ and $ \mc {LB}^G(M;\mb Z_2)$ respectively. 
\end{definition}

\begin{definition}\label{Def: minimal realization}
	Given $\alpha\in \wti{\mc H}^G(M;\mb Z_2)$, an embedded (resp. minimal) $G$-hypersurface $\Sigma\subset M$ is said to be a {\em realization} (resp. {\em minimal realization}) of $\alpha$ if $\llbracket \Sigma \rrbracket\in\alpha$. 
	
	Moreover, let $\{\Sigma_i\}_{i=1}^m$ be the set of $G$-components of $\Sigma$. Then the realization $\Sigma$ of $\alpha$ is said to be {\em essential} if $\llbracket \cup_{i\in \mc I}\Sigma_i \rrbracket \notin [0]^G$ for any non-empty proper subset $\mc I\subsetneq \{1,\dots,m\}$.  
\end{definition}

Since our variational constructions are mainly applied in $\mc {LB}^G(M;\mb Z_2)$, we would mainly consider $\mc H^G(M;\mb Z_2)$ in the following. 

Clearly, $\bd B_r(G\cdot p)$, with small $r>0$, is an essential realization of $[0]^G$. Indeed, the following lemma indicates that every $\alpha\in\mc H^G(M;\mb Z_2)$ has an essential realization, which can be further taken to be minimal and area minimizing if $\alpha\neq [0]^G$. 

\begin{lemma}\label{Lem: minimizer representative}
	For any $\alpha\neq [0]^G\in \mc H^G(M;\mb Z_2)$, there exists an mass minimizer $\Sigma$ in $\alpha$, i.e. $\mf M(\llbracket\Sigma\rrbracket)=\inf\{\mf M(T): T\in\alpha\}$, so that $\Sigma$ is an essential minimal realization of $\alpha$. 
\end{lemma}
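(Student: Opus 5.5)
We will minimize mass in $\alpha$ by the direct method and then use local $G$-equivariant regularity together with the dimension assumption \eqref{Eq: dimension assumption}. Afterwards we extract an essential sub-realization by discarding null-homologous $G$-components.

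\textbf{Step 1: existence.} Fix a realization $T_0\in\alpha$. Take a minimizing sequence $T_j=T_0+\bd\Omega_j$ with $\Omega_j\in\mc C^G(M)$ and $\mf M(T_j)\to m:=\inf_{T\in\alpha}\mf M(T)$. Since $M$ is compact, the Caccioppoli sets $\Omega_j$ have uniformly bounded perimeter, because $\mf M(\bd\Omega_j)\le \mf M(T_j)+\mf M(T_0)$. By BV compactness a subsequence converges in $L^1$ to some $\Omega\in\mc C(M)$. $G$-invariance passes to the limit, so $\Omega\in\mc C^G(M)$ and $T:=T_0+\bd\Omega\in\alpha$. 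Lower semicontinuity of mass under flat convergence gives $\mf M(T)=m$. Because $\alpha\neq[0]^G$, $T\neq 0$. Note that $m>0$ is not even needed here.

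\textbf{Step 2: regularity.} $T$ is a local $G$-boundary, so near each orbit $G\cdot p$ we have $T=\bd\Omega'$ in $B_r(G\cdot p)$ with $\Omega'\in\mc C^G$. Any $G$-invariant competitor $\Omega''$ agreeing with $\Omega'$ outside a compact subset of the tube produces $T+\bd(\Omega'\triangle\Omega'')\in\alpha$. Hence $\Omega'$ minimizes perimeter among $G$-invariant Caccioppoli sets in the tube.

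Next we remove the symmetry constraint. Averaging arguments show that a $G$-equivariant perimeter minimizer is stationary for all variations: the first variation along a vector field $X$ equals its first variation along the Haar-average of $X$, which is $G$-invariant. Thus $T$ is stationary and locally $G$-minimizing. The equivariant regularity theory referenced in the introduction (\cite{wang2022min}, and the regularity of $G$-invariant locally $G$-minimizing boundaries under $3\le\codim(G\cdot p)\le7$) then gives that $\spt T$ is a smooth closed embedded minimal $G$-hypersurface $\Sigma$ with multiplicity one, so $\llbracket\Sigma\rrbracket=T$.

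Multiplicity one follows from minimality mod $2$: a doubled sheet would cancel and lower the mass.

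\textbf{Step 3: essentiality.} Let $\Sigma_1,\dots,\Sigma_m$ be the $G$-components of $\Sigma$. Suppose some non-empty proper union $\Sigma_{\mc I}$ satisfies $\llbracket\Sigma_{\mc I}\rrbracket\in[0]^G$. Then $\llbracket\Sigma\setminus\Sigma_{\mc I}\rrbracket=\llbracket\Sigma\rrbracket-\llbracket\Sigma_{\mc I}\rrbracket\in\alpha$, since $[0]^G$ consists of $G$-boundaries $\bd\Omega$ with $\Omega\in\mc C^G(M)$. This new representative has strictly smaller mass, contradicting minimality. Hence $\Sigma$ is essential.

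The only further case is $\mc I$ equal to everything. It is excluded because $\alpha\ne[0]^G$, although it is not needed for the definition anyway.

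\textbf{Main obstacle.} We expect Step 2 to be the main obstacle: making precise that a minimizer among $G$-invariant competitors, with only local $G$-boundary structure, is regular. This relies on \eqref{Eq: dimension assumption} and on citing the equivariant regularity of $G$-locally minimizing boundaries, in the manner of \cite{wang2022min} and \cite{wang2026homology}. The near-orbit local picture must also be checked to be compatible with taking tubes of radius $r\in(\inj(G\cdot p)/2,\inj(G\cdot p))$.
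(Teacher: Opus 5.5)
Your proposal is correct and follows essentially the same route as the paper: the direct method on $T_0+\bd\Omega_j$ with $\Omega_j\in\mc C^G(M)$, lower semicontinuity of mass, the local equivariant regularity for $G$-minimizing local boundaries (the paper invokes \cite{wang2026homology}*{Theorem 2.11} with $h=0$), and essentiality deduced from mass-minimality. You also supply two details the paper leaves implicit, both correctly: the perimeter bound $\mf M(\bd\Omega_j)\le\mf M(T_j)+\mf M(T_0)$ needed for compactness, and the explicit mass reduction obtained by discarding a null-homologous proper sub-union.
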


\begin{proof}
    Consider a minimizing sequence $T_i \in \alpha$ with $\lim_{i \rightarrow \infty} \mf{M}(T_{i}) = \inf\{\mf M(T): T\in\alpha\}$. By the compactness theorem, $T_{i}$ converges (up to a subsequence) to $\Sigma \in \mc {LB}^G(M;\mb Z_2)$. To get $\Sigma \in \alpha$, note that $T_i = T_{0} + \partial \Omega_{i}$ holds for each $i$, where $T_0 \in \mc {LB}^G(M;\mb Z_2)$ represents $\alpha$ and $\Omega_i \in \mathcal{C}^{G}(M)$ for all $i$. 
    Since $\sup_{i} \mathbf{M}(\Omega_i) \leq \mf M (M)< \infty$, the compactness theorem implies that $\Omega_i$ converges (up to a subsequence) to $\Omega \in \mathcal{C}^{G}(M)$, and $\partial \Omega_i\to\bd\Omega$. 
    Since $\partial \Omega_i = T_i - T_0$ also converges to $\Sigma - T_0$, we conclude that $\Sigma = T_0 + \partial \Omega$. This proves that $\Sigma \in \alpha$.    

    By the lower semi-continuity of the mass norm and $\Sigma \in \alpha$, 
    $\Sigma$ is a mass minimizer in $\alpha$. By applying \cite{wang2026homology}*{Theorem 2.11} locally with $h = 0$, $\Sigma$ is an embedded minimal $G$-hypersurface. As $\Sigma$ is mass-minimizing in $\alpha \neq [0]^{G}$, it is an essential minimal realization of $\alpha$.       
\end{proof}

\begin{remark}\label{Rem: minimizing in mfd with boundary}
	If $M$ is a compact Riemannian $G$-manifold with smooth boundary, then one can similarly define $[T]^G\in\mc H^G(M;\mb Z_2)$ (resp. $[T]^G\in \wti{\mc H}^G(M;\mb Z_2)$) for $T\in\mc {LB}^G(M;\mb Z_2)$ (resp. $T\in\mc Z_n^G(M;\mb Z_2)$) as the $G$-homology class of mod $2$ $G$-cycles. 
	Note that $S\in [T]^G$ forces $\bd S=0$, and $[T]^G$ is not the relative $G$-homology class.  
	If we further assume that $\bd M$ is minimal, then one can also construct area minimizing realization $\Sigma$ of $[T]^G\in \mc {LB}^G(M;\mb Z_2)$ as above so that every $G$-component of $\Sigma$ is contained either in $\bd M$ or in $\interior(M)$ by the maximum principle. 
\end{remark}

Note that $\mc H^G(M;\mb Z_2)$ has a group structure given by $[T]^G+[S]^G:=[T+S]^G$ with identity $[0]^G$ and inverse $([T]^G)^{-1}=[T]^G$. 
Hence, the following lemma indicates that $\mc H^G(M;\mb Z_2)\cong \mb Z_2^r$ for some finite $r\in\mb N$. 
\begin{lemma}\label{Lem: finite G-homology class}
	$\mc H^G(M;\mb Z_2)$ is finite. 
\end{lemma}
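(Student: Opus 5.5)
Since $\mc H^G(M;\mb Z_2)$ is a $\mb Z_2$-vector space, it suffices to embed it, or some quotient tower of it, into a finitely generated object. The natural approach is to separate the ``global'' information from the ``local'' information. To this end, consider the subgroup
\[\mc B^G:=\{[\bd\Omega]^G\}\]
of classes that are boundaries in $M$ of arbitrary (not necessarily $G$-invariant) Caccioppoli sets, and the forgetful map
\[\Phi:\mc H^G(M;\mb Z_2)\to H_n(M;\mb Z_2),\qquad [T]^G\mapsto [T].\]
The map $\Phi$ is well defined because $G$-boundaries are boundaries. Its image lies in a finite group, since $H_n(M;\mb Z_2)\cong H^1(M;\mb Z_2)=\mathrm{Hom}(\pi_1(M),\mb Z_2)$ and $\pi_1(M)$ is finitely generated, as noted before Lemma \ref{Lem: minimizer representative}. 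It therefore remains to show that $\ker\Phi$ is finite.

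So let $T\in\mc{LB}^G(M;\mb Z_2)$ with $T=\bd\Omega$ for some $\Omega\in\mc C(M)$, not necessarily $G$-invariant. For each $g\in G$, the set $g\cdot\Omega$ has the same boundary $g_\#T=T$. Since $M$ is connected, the only Caccioppoli sets with zero boundary are $\emptyset$ and $M$, so $g\cdot\Omega\in\{\Omega,M\setminus\Omega\}$. This yields a homomorphism $\chi_T:G\to\mb Z_2$, where $\chi_T(g)=0$ if $g\cdot\Omega=\Omega$. The choice of $\Omega$ affects $\chi_T$ only up to complementation, and complementation leaves $\chi_T$ unchanged.

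I would check that $\chi_T$ is continuous, for example because $g\mapsto \mf M(g\cdot\Omega\,\triangle\,\Omega)$ is continuous. Consequently $\chi_T$ factors through the finite group $G/G^0$, where $G^0$ is the identity component. If $\chi_T\equiv 0$, then $\Omega\in\mc C^G(M)$ and $[T]^G=[0]^G$. Moreover, $T\mapsto\chi_T$ is additive: for $T=\bd\Omega$ and $S=\bd\Omega'$ we have $T+S=\bd(\Omega\triangle\Omega')$, and the corresponding signs add. Hence $\ker\Phi$ injects into $\mathrm{Hom}(G/G^0,\mb Z_2)$, which is finite. (A nonzero $\chi_T$ corresponds exactly to a $G_\pm$-signed-symmetric boundary, matching the definition of $\mc B^{G_\pm}$.)

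Combining the two steps, $|\mc H^G(M;\mb Z_2)|\le |H_n(M;\mb Z_2)|\cdot|\mathrm{Hom}(G/G^0,\mb Z_2)|<\infty$. The main obstacle is justifying that $\Phi$ is well defined and that it lands in honest homology: one must verify that an integral flat mod-$2$ cycle in the $G$-invariant setting defines a class in $H_n(M;\mb Z_2)$, and that the flat cycles that are trivial in homology are precisely the boundaries $\bd\Omega$ of Caccioppoli sets. I would settle this through the Federer–Fleming identification of flat homology with singular homology in top codimension one, together with the isoperimetric/deformation theorem showing that an $n$-cycle that is null-homologous in $M^{n+1}$ bounds an $(n+1)$-chain, i.e. a Caccioppoli set. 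Note that the local $G$-boundary condition is not actually needed for finiteness; it only ensures that we are working within the correct subgroup.
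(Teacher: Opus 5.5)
Your argument is correct, and it takes a genuinely different route from the paper's proof.

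The paper works on the orbit space. It takes an equivariant triangulation of $\pi(M)$ (Verona, Illman) with constant orbit type on open simplices. It then assigns to a class $\alpha$ with realization $\Sigma$ the relative class $[\pi(\Sigma)]\in H_l(\pi(M),\pi(M\setminus M^{prin});\mb Z_2)$, and finiteness follows from finiteness of simplicial homology. The injectivity of this assignment is asserted there rather than proved.

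You stay in $M$ and filter the group by two maps: the forgetful homomorphism $\Phi$ to $H_n(M;\mb Z_2)$, and the character $\chi_T\in\mathrm{Hom}(G/G^0,\mb Z_2)$ on $\ker\Phi$. Your only inputs are:
\begin{itemize}
\item the Federer--Fleming identification of mod $2$ flat homology with singular homology, plus the fact that top-dimensional mod $2$ chains are Caccioppoli sets;
\item the constancy theorem on the connected $M$;
\item $L^1$-continuity of the isometric action;
\item $|\pi_0(G)|<\infty$.
\end{itemize}
One step you leave implicit is that $[T]^G\mapsto\chi_T$ is well defined on classes. This follows from your additivity together with $\chi_{\bd\Omega_0}=0$ for $\Omega_0\in\mc C^G(M)$.

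Your route buys three things.
\begin{itemize}
\item It gives the explicit bound $|\mc H^G(M;\mb Z_2)|\le|H_n(M;\mb Z_2)|\cdot|\mathrm{Hom}(G/G^0,\mb Z_2)|$.
\item It never uses the local $G$-boundary condition, so it proves finiteness of the larger group $\wti{\mc H}^G(M;\mb Z_2)$. That would make the finiteness of $\mc B_1$ in the proof of Theorem \ref{Thm: infinite in trivial G-homology class} immediate, without a separate appeal to the finiteness of $(G,1)$-sided hypersurfaces at that step.
\item It gives a transparent reading of Definition \ref{Def: 3-type G-homology}. Type (III) means $\Phi([T]^G)\neq0$. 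Type (II) means $\Phi([T]^G)=0$ but $\chi_T\neq0$, and $\ker\chi_T$ plays the role of the index-two subgroup $G_+$ in Proposition \ref{Prop: lift group action}(1). Type (I) is the zero class.
\end{itemize}
In exchange, the paper's orbit-space formulation ties $\mc H^G(M;\mb Z_2)$ directly to the topology of $M/G$. That is the viewpoint behind the $S^4$ example in the introduction.
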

\begin{proof}
	By the equivariant triangulation \cite{verona1979triangulation, illman1983equivariant}, we know the orbit space $\pi(M)$ has a triangulation so that the orbit type is constant in each open simplex. 
	Hence, we have finite (simplicial) homologies $H_k(\pi(M),\mb Z_2)$, $H_k(\pi(M\setminus M^{prin});\mb Z_2)$, and $H_k(\pi(M), \pi(M\setminus M^{prin});\mb Z_2)$. 
 	Note that every $\alpha\in\mc H^G(M;\mb Z_2)$ with a realization $\Sigma$ can be uniquely associated to a relative homology class $[\pi(\Sigma)]\in H_l(\pi(M), \pi(M\setminus M^{prin});\mb Z_2)$ in the orbit space, where $l + 1$ is the dimension of $M/G$ (see \ref{Eq: cohomogeneity}). This shows the finiteness of $\mc H^G(M;\mb Z_2)$. 
\end{proof}

\begin{definition}\label{Def: 3-type G-homology}
	Given any $\alpha\in \mc H^G(M;\mb Z_2)$, we say 
	\begin{itemize}
		\item $\alpha$ is of type (I) if $T \in \mc B^G(M;\mb Z_2)$ for all $T\in\alpha$;
		\item $\alpha$ is of type (II) if $T \in  (\mc {LB}^G(M;\mb Z_2)\cap \mc B(M;\mb Z_2))\setminus \mc B^G(M;\mb Z_2)$ for all $T\in\alpha$;
		\item $\alpha$ is of type (III) if $T \in  \mc {LB}^G(M;\mb Z_2)\setminus \mc B(M;\mb Z_2)$ for all $T\in\alpha$. 
	\end{itemize}
\end{definition}

One easily verifies that every $\alpha\in \mc H^G(M;\mb Z_2)$ must be in one of the three types.
In \cite{wang2026homology}, it has been shown that the $G_\pm$-signed-symmetric constructions are required for $\alpha$ of type (II), and $\alpha$ of type (III) can be transformed into type (II) by considering the associated double cover with lifted group actions. 
To be exact, we have the following result. 
\begin{proposition}[\cite{wang2026homology}*{Proposition 2.4, 2.5}]\label{Prop: lift group action}
	Given $\alpha\in\mc H^G(M;\mb Z_2)$, 
    \begin{itemize}
        \item[(1)] if $\alpha$ is of type (II), then there is an index $2$ Lie subgroup $G_+ < G$ with $G_-:=G\setminus G_+$ so that for any $T\in\alpha$ and $p\in M$, we have $T\in \mc B^{G_\pm}(M;\mb Z_2)$, 
    		\begin{align}\label{Eq: free Gpm}
        			G\cdot p=G_+\cdot p \sqcup G_-\cdot p \qquad{\rm and}\qquad \mbox{$G_-$ permutes $\{G_+\cdot p,G_-\cdot p\}$},
    		\end{align}
        \item[(2)] if $\alpha$ is of type (III), then there exists a double cover $\tau:\wti M\to M$ and a trivial Lie group $2$-cover $p:\wti G\cong G\times\mb Z_2 \to G$ with $p^{-1}(e)\cong\mb Z_2$ so that 
        \begin{itemize}
            \item[(a)] $\wti G$ acts by isometries on $\wti M$ with $\tau\circ \tilde{g} = p(\tilde{g})\circ\tau: \wti M\to M$ for all $\tilde{g}\in\wti G$,
            \item[(b)] $p^{-1}(e)=\{\tilde{e},\tilde{g}_-^0\}$ is the deck transformation group of $\tau$, where $\tilde{e}\in \wti G$ is the identity, 
            \item[(c)] there is an index $2$ Lie subgroup $\wti G_+<\wti G$ with $\wti G_-:=\tilde{g}_-^0\cdot \wti G_+= \wti G\setminus \wti G_+$ so that $p|_{\wti G_+}:\wti G_+\to G$ is an isomorphism, and \eqref{Eq: free Gpm} holds in $\wti M$ with $\wti G_\pm$ in place of $G_\pm$; 
            \item[(d)] for every $T\in\alpha$, there exists $\wti T\in\mc B^{\wti G_\pm}(\wti M;\mb Z_2)$ so that $\tau_\#(|\wti T|) = 2|T|$.
        \end{itemize}
    \end{itemize}
\end{proposition}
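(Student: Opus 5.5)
The idea is to reduce everything to local orbit-type structure and the definitions of types (II) and (III); the statement is essentially \cite{wang2026homology}*{Proposition 2.4, 2.5}.

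\emph{Part (1).} Fix $T\in\alpha$ of type (II), so $T=\bd\Omega$ for some $\Omega\in\mc C(M)$ but no $G$-invariant choice exists. Since $M$ is connected and $T$ is $G$-invariant, we have $\bd(g\cdot\Omega)=\bd\Omega$ for every $g\in G$. Hence $g\cdot\Omega\in\{\Omega, M\setminus\Omega\}$: two Caccioppoli sets with the same mod $2$ boundary differ by a set with zero boundary, and in a connected $M$ that set is $\emptyset$ or $M$.

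This gives a homomorphism $\sigma:G\to\mb Z_2$. It is continuous, because $g\mapsto g\cdot\Omega$ is continuous in the flat topology and $\Omega$, $M\setminus\Omega$ are at positive flat distance. It is nontrivial, since otherwise $\Omega$ would be $G$-invariant. Set $G_+=\ker\sigma$ and $G_-=G\setminus G_+$. Then $G_+$ is a closed index-$2$ subgroup, and by construction $T\in\mc B^{G_\pm}(M;\mb Z_2)$.

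Independence of the choice of $T\in\alpha$: any other $S=T+\bd\Omega'$ with $\Omega'$ $G$-invariant gives $\Omega\triangle\Omega'$, which has the same sign behaviour.

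For \eqref{Eq: free Gpm}, I claim $G_-$ has no element fixing $G_+\cdot p$, i.e. $G_p\subset G_+$ for every $p$; then $G\cdot p=G_+\cdot p\sqcup G_-\cdot p$ with $G_-$ swapping the two pieces. Here I use that $T$ is a local $G$-boundary. Near $G\cdot p$ write $T=\bd\Omega_p$ with $\Omega_p$ $G$-invariant in $B_r(G\cdot p)$. Then $\Omega\cap B_r$ equals either $\Omega_p$ or its complement there. If some $g\in G_-\cap G_p$ existed, then $g$ would preserve $B_r(G\cdot p)$ and map $\Omega$ to its complement. So $\Omega_p$ would equal its own complement in $B_r$ up to measure zero, which is absurd. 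This local step, which ties the sign of $g$ to the isotropy, is the main point of (1).

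\emph{Part (2).} For $T$ of type (III), the local boundary structure defines a $\mb Z_2$-valued class: crossing $T$ flips sheets. Concretely, take the cohomology class $w\in H^1(M;\mb Z_2)$ Poincar\'e dual to $[T]\neq 0$. Let $\tau:\wti M\to M$ be the associated double cover, built by cutting $M$ along $T$ and gluing two copies crosswise, with pulled-back metric. Then $\tau^{-1}(T)$ bounds the lift $\wti\Omega$ given by "one side" in each sheet, and $\tau_\#|\wti T|=2|T|$ with $\wti T=\bd\wti\Omega$.

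Since $T$ is $G$-invariant, $g^*w=w$, so each $g$ lifts to $\wti M$ in exactly two ways. These lifts form a group $\wti G$ that is a $2$-extension of $G$ containing the deck group $\{\tilde e,\tilde g_-^0\}$.

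Next, apply the argument of (1) to $\wti\Omega$ under $\wti G$. This gives the sign homomorphism and $\wti G_\pm$, with $\tilde g_-^0\in\wti G_-$ because the deck transformation swaps $\wti\Omega$ and its complement. Consequently $p|_{\wti G_+}$ is injective, and it is surjective since each $g$ has one lift of each sign. This makes $p|_{\wti G_+}$ an isomorphism and splits $\wti G\cong G\times\mb Z_2$.

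Property \eqref{Eq: free Gpm} in $\wti M$ follows as before, using that $\tau^{-1}(T)$ is a local $\wti G$-boundary because $T$ is a local $G$-boundary. The main obstacle is the lifting step: making sure the lifts form a compact Lie group acting smoothly and isometrically. I would handle it by covering-space theory applied to the action map $G\times\wti M\to M$, restricted to the identity component and then extended componentwise.
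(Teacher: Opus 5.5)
The paper gives no proof of this proposition; it quotes it from the cited reference, so this review assesses your argument on its own. Most of your architecture is right. The constancy theorem forces $g\cdot\Omega\in\{\Omega,M\setminus\Omega\}$ and gives a continuous sign homomorphism with kernel $G_+$. In Part (2), the fact that the deck transformation has sign $-1$ is exactly what splits the extension as $G\times\mb Z_2$.

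\textbf{The key step of Part (1) is false as written.} This is the step you single out as the main point. The claim that $\Omega\cap B_r(G\cdot p)$ equals $\Omega_p$ or its complement cannot hold once $G_-\neq\emptyset$. Every $g\in G$, not only those in $G_p$, preserves the tube $B_r(G\cdot p)$ and the $G$-invariant set $\Omega_p$. So your argument would show that \emph{no} element of $G_-$ exists. The symptom is that you never actually use $g\cdot p=p$.

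The constancy theorem gives the dichotomy only on each connected component of $B_r(G\cdot p)$. This tube is disconnected whenever $G\cdot p$ is, which is precisely the situation \eqref{Eq: free Gpm} describes.

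The repair is to work on the component $B^0$ of $B_r(G\cdot p)$ containing $p$.
\begin{itemize}
\item On $B^0$, $\Omega\cap B^0$ is either $\Omega_p\cap B^0$ or $B^0\setminus\Omega_p$.
\item An element $g\in G_p$ maps $B^0$ onto itself, since it is an isometry fixing $p$, and it preserves $\Omega_p\cap B^0$.
\item If also $g\in G_-$, then $g$ sends $\Omega\cap B^0$ to $B^0\setminus\Omega$, forcing $\Omega_p\cap B^0=B^0\setminus\Omega_p$, which is absurd.
\end{itemize}
The same correction is needed when you rerun the argument in $\wti M$ for (c), using the $\wti G$-invariant local boundaries $\tau^{-1}(\Omega_p)$.

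\textbf{The gaps in Part (2) are of rigor rather than strategy.}

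First, cutting along $T$ and regluing presupposes that $T$ is a smooth hypersurface. For a general $T\in\mc{LB}^G(M;\mb Z_2)$, obtain $\wti\Omega$ homologically. Since $\tau^*w=0$, the preimage cycle $\wti T$ is null-homologous mod $2$ in $\wti M$, so $\wti T=\bd\wti\Omega$. The deck transformation must send $\wti\Omega$ to its complement. Otherwise $\wti\Omega=\tau^{-1}(\Omega)$ for some $\Omega$ with $\bd\Omega=T$, contradicting type (III).

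Second, your route to the Lie structure through the identity component is not automatic. The action of $G^0$ lifts to $\wti M$ only if $w$ vanishes on orbit loops $t\mapsto g_t\cdot x$. For example, rotations of a circle, lifted to its connected double cover, give a nontrivial covering group. Here $w$ does vanish on orbit loops: for $x\notin\spt T$, the whole orbit avoids the $G$-invariant set $\spt T$. But this needs to be said.

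A cleaner route avoids the issue entirely. Let $s(g)$ be the unique lift of $g$ with $s(g)\cdot\wti\Omega=\wti\Omega$.
\begin{itemize}
\item Over a simply connected neighborhood $U$ of $g_0$ in $G$, the lifting criterion gives a continuous family of lifts through $s(g_0)$.
\item The sign of this family is locally constant by $L^1$-continuity, so the family is $s|_U$.
\item Hence $s$ is a continuous injective homomorphism defining a smooth isometric action.
\item Setting $\wti G:=s(G)\times\{\tilde e,\tilde g^0_-\}$ with $\wti G_+:=s(G)$ yields (a)--(c) directly.
\end{itemize}

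Finally, record two facts needed for (d). The cover $\tau$ depends only on $[T]\in H_n(M;\mb Z_2)$. Replacing $T$ by $T+\bd\Omega'$ with $\Omega'\in\mc C^G(M)$ changes $\wti\Omega$ by the $\wti G$-invariant set $\tau^{-1}(\Omega')$. So the same $\wti G_\pm$ works for every $T\in\alpha$.
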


\medskip
For any embedded closed hypersurface $\Sigma$ in the interior of a connected subset $U\subset M$, recall that $\Sigma$ is said to be {\em separating} if $U\setminus \Sigma$ has two connected components. 
Then every connected closed hypersurface $\Sigma\subset M$ is either $2$-sided separating, $2$-sided non-separating, or $1$-sided (also non-separating). 
Similarly, we use the following definitions. 

\begin{definition}\label{Def: equivariant separating}
	Let $\Sigma$ be an embedded closed $G$-connected $G$-hypersurface, and $U\subset M$ be a $G$-connected subset so that $\Sigma\subset\interior(U)$. 
	Then $\Sigma$ is said to be {\em $G$-separating in $U$} if $U\setminus \Sigma$ has two $G$-components. 
	In particular, there are exact four cases of $\Sigma$:
	\begin{itemize}
		\item[(1)] $\Sigma$ is $2$-sided $G$-separating in $U$;
		\item[(2)] $\Sigma$ is $2$-sided non-$G$-separating in $U$ with a $G$-invariant unit normal $\nu$;
		\item[(3)] $\Sigma$ is $2$-sided non-$G$-separating in $U$ and does not admit a $G$-invariant unit normal;
		\item[(4)] $\Sigma$ is $1$-sided. 
	\end{itemize}
    In Case (1)(2) (resp. (3)(4)), $\Sigma$ admits a (resp. no) $G$-invariant unit normal, and is said to be {\em $(G,2)$-sided} (resp. {\em $(G,1)$-sided}). 
\end{definition}

\begin{remark}\label{Rem: locally G-boundary-type for G,2-sided}
    For an embedded closed $G$-hypersurface $\Sigma\subset M$, if $\Sigma$ is $(G,2)$-sided, then $\Sigma$ is $G$-separating in a $G$-invariant small $r$-neighborhood $B_r(\Sigma)$, and thus $\llbracket\Sigma\rrbracket\in\mc {LB}^G(M;\mb Z_2)$. 
    If $\Sigma$ is $(G,1)$-sided, then the metric completion $\wti M$ of $M\setminus \Sigma$ has a new boundary $G$-component $\wti\Sigma$ that is a $G$-connected double cover of $\Sigma$ (diffeomorphic to $\bd B_r(\Sigma)$) so that $\wti \Sigma$ is $(G,2)$-sided and $\llbracket\wti\Sigma\rrbracket\in\mc {LB}^G(\wti M;\mb Z_2)$. 
    We refer to \cite{wang2024Ricci}*{(4.3)-(4.6)} for the specific constructions of $\wti M$ and $\wti\Sigma$. 
\end{remark}

\begin{remark}
	In Definition \ref{Def: equivariant separating}, note that the metric completion of $U\setminus \Sigma$ is $G$-equivariantly diffeomorphic to $U\setminus B_r(\Sigma)$ for some $r>0$ small enough. Hence, the above cases are analogous to the non-equivariant setting in the following sense:
	\begin{itemize}
		\item For $\Sigma$ in Case (1), $U\setminus \Sigma$ has two $G$-components, each of which has a new boundary $G$-component that is isometric to $\Sigma$. 
			Thus, Case (1) corresponds to the $2$-sided separating case in the non-equivariant setting of Song \cite{song2023infinite}. 
			Additionally, $[\Sigma]^G$ must be of type (I). 
		\item In Case (2), the metric completion of $U\setminus\Sigma$ is still $G$-connected, but has two new boundary $G$-components (diffeomorphic to $\bd B_r(\Sigma)$), each of which is isometric to $\Sigma$. 
			Hence, Case (2) corresponds to the $2$-sided non-separating case in the non-equivariant setting. 
			In this case, $[\Sigma]^G$ can be of type (II) (e.g. $\Sigma=S^2\times \{\pm p\}$ is two $2$-spheres in $S^2\times S^1$ with $\mb Z_2$ acting on $S^1$ by the antipodal map) or be of type (III) (e.g. $\Sigma$ is a $2$-sphere in $S^2\times S^1$ with $\mb Z_2$ acting on $S^2$ by the antipodal map). 
		\item In Case (3) and (4), the metric completion of $U\setminus\Sigma$ is $G$-connected with a single new boundary $G$-component that is isometric to a double cover of $\Sigma$. Additionally, since this new boundary $G$-component is equivariantly diffeomorphic to $\bd B_r(\Sigma)$, it is a trivial $2$-cover of $\Sigma$ in Case (3) and non-trivial in Case (4) respectively. 
			Therefore, both Case (3) and (4) correspond to the $1$-sided case in the non-equivariant setting. 
            Suppose $\llbracket\Sigma\rrbracket\in\mc {LB}^G(M;\mb Z_2)$.
			Then, in Case(4), $[\Sigma]^G$ must be of type (III). 
			However, in Case (3), $[\Sigma]^G$ can be of type (II) (e.g. $\Sigma$ is a $2$-sphere in $S^3$ with $\mb Z_2$ acting on $S^3$ by the antipodal map) or be of type (III) (e.g. $\Sigma=S^2\times \{e^{i\cdot 0}\}$ is a $2$-sphere in $S^2\times S^1$ with $\mb Z_2$ acting by $(p, e^{i\theta})\mapsto (-p, e^{-i\theta})$).  
	\end{itemize}
\end{remark}

\begin{lemma}\label{Lem: intersect G1-sided}
    For any two closed embedded $G$-hypersurfaces $\Sigma$ and $\Gamma$ in $ M$,
    if $\Sigma$ is $(G,1)$-sided and $[\Sigma]^G=[\Gamma]^G$, then $\Sigma\cap\Gamma\neq\emptyset$. 
\end{lemma}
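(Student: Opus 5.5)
We argue by contradiction and assume $\Sigma\cap\Gamma=\emptyset$. By hypothesis $\llbracket\Gamma\rrbracket=\llbracket\Sigma\rrbracket+\bd\Omega$ for some $\Omega\in\mc C^G(M)$. Since $\Sigma$ and $\Gamma$ are disjoint closed embedded hypersurfaces, we may localize near $\Sigma$. We choose $r>0$ so small that the tubular neighborhood $B_r(\Sigma)$ is $G$-invariant and disjoint from $\Gamma$. Inside $B_r(\Sigma)$ we then have $\llbracket\Sigma\rrbracket=\bd\Omega$ as mod $2$ currents, because $\llbracket\Gamma\rrbracket$ vanishes there. The aim is to show that this forces $\Sigma$ to admit a $G$-invariant unit normal, contradicting $(G,1)$-sidedness.

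The first step is a constancy argument for $\Omega$. On $B_r(\Sigma)\setminus\Sigma$, the current $\bd\Omega$ vanishes. By the constancy theorem for Caccioppoli sets, $\Omega$ restricted to each connected component of $B_r(\Sigma)\setminus\Sigma$ is either full or empty, up to measure zero. Near any point of $\Sigma$, the multiplicity of $\bd\Omega$ along $\Sigma$ is $1$ mod $2$. Hence the two local sides of $\Sigma$ lie in different components, one contained in $\Omega$ and the other in its complement. In particular, $\Sigma$ has two local sides everywhere, so $\Sigma$ is $2$-sided and $B_r(\Sigma)\setminus\Sigma$ splits into exactly two pieces, $B_r(\Sigma)\cap\Omega$ and $B_r(\Sigma)\setminus\Omega$. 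This step handles a connected $\Sigma$; if $\Sigma$ is only $G$-connected, the argument runs componentwise.

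Next we build the normal. On $\Sigma$, let $\nu$ be the unit normal pointing into $\Omega$. This is well defined and continuous, since at each point exactly one side lies in $\Omega$. Because $\Omega$ is $G$-invariant and $G$ acts by isometries, $dg(\nu)$ points into $g\cdot\Omega=\Omega$ at $g\cdot x$, so $dg(\nu_x)=\nu_{g\cdot x}$. Thus $\nu$ is a $G$-invariant unit normal, contradicting the assumption that $\Sigma$ is $(G,1)$-sided. This also matches Definition \ref{Def: equivariant separating}: in cases (3)/(4), the metric completion of $B_r(\Sigma)\setminus\Sigma$ has a single boundary $G$-component, the double cover $\wti\Sigma$ of Remark \ref{Rem: locally G-boundary-type for G,2-sided}, and $G$ interchanges the sides.

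The main obstacle is the measure-theoretic justification in the first step. One must show that the local side of $\Omega$ is well defined everywhere and not merely almost everywhere. The plan for this is to use the constancy theorem on each small normal slab $\exp^\perp_\Sigma(U\times(0,r))$ and $\exp^\perp_\Sigma(U\times(-r,0))$, where $U\subset\Sigma$ is a small disc, and to read off the multiplicity $1$ of $\bd\Omega$ on $U$. This selects exactly one slab to lie in $\Omega$, and continuity follows by overlapping discs.
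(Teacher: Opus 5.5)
Your proof is correct and follows the same route as the paper. Disjointness turns $\llbracket\Gamma\rrbracket=\llbracket\Sigma\rrbracket+\bd\Omega$ into $\bd\Omega=\llbracket\Sigma\sqcup\Gamma\rrbracket$, and the side of $\Sigma$ occupied by the $G$-invariant set $\Omega$ then gives a $G$-invariant unit normal, contradicting $(G,1)$-sidedness; the paper simply asserts that $\Omega$ is an open $G$-set with smooth boundary $\Sigma\sqcup\Gamma$, and your constancy-theorem argument on normal slabs supplies exactly the justification for that step.
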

\begin{proof}
    Otherwise, there exists an open $G$-set $\Omega$ with smooth boundary $\bd\Omega=\Sigma\sqcup\Gamma$. This gives a $G$-invariant (outward) unit normal along $\Sigma$ as a contradiction. 
\end{proof}

For closed minimal $G$-hypersurfaces in Case (1) and (2), we can use the implicit function theorem to generalize \cite{bray2010rigidity}* {Proposition 3.2} and \cite{song2023infinite}*{Lemma 10}. For $\Sigma$ in Case (3) and (4), one can also apply it in the metric completion of $B_r(\Sigma)\setminus\Sigma$ so that $\Sigma$ is lifted to its double cover with a $G$-invariant unit normal. 

\begin{lemma}\label{Lem: local foliation}
	Let $\Gamma$ be an embedded closed $(G,2)$-sided minimal $G$-hypersurface in $(M^{n+1}, g_{_M})$ with a $G$-invariant unit normal $\nu$. 
	Then there exist a positive number $\delta_1>0$ and a $G$-invariant smooth function $w:\Gamma\times (-\delta_1,\delta_1)\to \mb R$ (i.e. $w(g\cdot x,\cdot)=w(x,\cdot)$ for all $g\in G, x\in\Gamma$) so that 
	\begin{itemize}
		\item[(i)] for each $x\in \Gamma$, $w(x,0)=0$, and $\phi:=\frac{\bd}{\bd t}|_{t=0} w(x,t)$ is the first eigenfunction of the Jacobi operator $L_\Gamma$ of $\Gamma$; 
		\item[(ii)] for each $t\in (-\delta_1,\delta_1 ) $, we have $\int_\Gamma (w(\cdot,t)-t\phi)\phi = 0$;
		\item[(iii)] for each $t\in (-\delta_1,\delta_1)$, the mean curvature of the $G$-hypersurface $\Gamma_t:=\exp_{\Gamma}^\perp(w(\cdot,t)\nu)$ is either positive or negative or identically zero. 
	\end{itemize}
\end{lemma}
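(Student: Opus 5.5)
We will follow the implicit function theorem argument of \cite{bray2010rigidity}*{Proposition 3.2} and \cite{song2023infinite}*{Lemma 10}, restricted throughout to $G$-invariant functions. First, since $G$ acts by isometries preserving $\Gamma$ and $\nu$, the Jacobi operator $L_\Gamma=\Delta_\Gamma+|A|^2+\Ric(\nu,\nu)$ commutes with the $G$-action on functions. Because $\Gamma$ may be disconnected but is a union of $G$-components, we work on each $G$-component (or note that $G$-connectedness forces the first eigenvalue to be shared by all components). The first eigenvalue $\lambda_1$ of $L_\Gamma$ on a connected component is simple with a positive eigenfunction. For any $g\in G$, $\phi\circ g$ is again a positive first eigenfunction on the image component, so normalizing $\phi>0$ with $\int_\Gamma\phi^2=1$ and averaging over the Haar measure $\mu$ shows that $\phi$ can be chosen $G$-invariant. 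Hence (i) is consistent with the $G$-invariance requirement.

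Next, we set up the map. Let $C^{2,\alpha}_G(\Gamma)$ be the $G$-invariant H\"older functions and $\phi^\perp:=\{u\in C^{2,\alpha}_G(\Gamma):\int_\Gamma u\phi=0\}$. Define
\[\Psi:\phi^\perp\times(-\delta,\delta)\to C^{0,\alpha}_G(\Gamma),\qquad \Psi(u,t):=H\big(\exp^\perp_\Gamma((t\phi+u)\nu)\big)-\frac{1}{\mathrm{Area}(\Gamma)}\int_\Gamma H\big(\exp^\perp_\Gamma((t\phi+u)\nu)\big),\]
where $H$ denotes the mean curvature pulled back to $\Gamma$. Since $t\phi+u$ and $\nu$ are $G$-invariant and $G$ acts isometrically, the perturbed hypersurface is $G$-invariant and $H$ is a $G$-invariant function, so $\Psi$ indeed lands in $G$-invariant functions with zero mean. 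The linearization in $u$ at $(0,0)$ is $u\mapsto L_\Gamma u$ minus its mean, which we regard as a map from $\phi^\perp$ to the zero-mean $G$-invariant $C^{0,\alpha}$ functions.

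The main obstacle is invertibility of this linearization on the $G$-invariant subspaces. Injectivity: if $L_\Gamma u=c$ is constant with $u\perp\phi$, then testing against $\phi$ gives $\lambda_1\int u\phi=c\int\phi$, so $c=0$ and then $u\in\ker L_\Gamma\cap\phi^\perp$. Here we follow Song's treatment of the degenerate case rather than assuming nondegeneracy. If $\lambda_1\neq0$ or $\ker L_\Gamma=0$ on invariant functions, surjectivity follows from Fredholm theory and the decomposition of $L^2$ into $G$-invariant pieces, since the $G$-averaging projection commutes with $L_\Gamma$. In the degenerate situation $\lambda_1=0$ the kernel is spanned by $\phi$, which is excluded by $\phi^\perp$, so the operator is still an isomorphism onto zero-mean functions modulo the constant direction. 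Accordingly, we will use the range condition "$L_\Gamma u\in\mathrm{span}\{1\}^\perp$" versus "$\phi^\perp$" exactly as in \cite{song2023infinite}. The implicit function theorem then gives a smooth family $u(t)\in\phi^\perp$ with $\Psi(u(t),t)=0$ and $u(0)=0$.

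Finally, set $w(\cdot,t):=t\phi+u(t)$. Then (ii) holds because $u(t)\perp\phi$. Differentiating the identity $\Psi(u(t),t)=0$ at $t=0$ gives $u'(0)\in\ker$ of the invertible linearization, so $u'(0)=0$ and $\partial_tw|_{t=0}=\phi$, which is (i). Also $w$ is $G$-invariant by construction. For (iii), $\Psi=0$ means the mean curvature of $\Gamma_t$ equals a constant $c(t)$, so it is positive, negative, or identically zero.
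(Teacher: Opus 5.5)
\paragraph{Where the proposal breaks.} Your $G$-averaging argument for $\phi$ is fine, and so is your treatment of the degenerate stable case $\lambda_1=0$. There $\ker L_\Gamma\cap C^\infty_G(\Gamma)=\mathrm{span}\{\phi\}$, so your linearization $u\mapsto L_\Gamma u-\overline{L_\Gamma u}$ (with $\overline{f}$ the mean of $f$ over $\Gamma$), from $\phi^\perp$ to mean-zero functions, is injective of index zero. Also $D_2\Psi_{(0,0)}(1)=L_\Gamma\phi-\overline{L_\Gamma\phi}=0$ in that case. Normalizing the range by zero mean works as well as the paper's division by $\phi$.

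The gap is that you apply the same implicit function argument when $\lambda_1\neq 0$, and there it fails in two places.

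First, injectivity. If $\lambda_1\neq0$, every $G$-invariant Jacobi field is automatically orthogonal to $\phi$. So for an unstable $\Gamma$ that is degenerate at a higher eigenvalue with a $G$-invariant Jacobi field, your linearization has a kernel on $\phi^\perp$, and the implicit function theorem does not apply. Your sentence ``if $\lambda_1\neq0$ or $\ker L_\Gamma=0$ \dots'' conflates two conditions; only the second gives invertibility.

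Second, and more seriously, even when the linearization is invertible, we have $L_\Gamma\phi=c\phi$ with a constant $c\neq0$. Hence $D_2\Psi_{(0,0)}(1)=c(\phi-\overline{\phi})$, which is nonzero unless $\phi$ is constant. Differentiating $\Psi(u(t),t)=0$ gives $D_1\Psi_{(0,0)}(u'(0))=-c(\phi-\overline{\phi})\neq 0$, so $u'(0)\neq0$. Then $\partial_t w|_{t=0}=\phi+u'(0)$ satisfies $L_\Gamma(\partial_t w|_{t=0})=\mathrm{const}$. It is the initial velocity of the constant mean curvature foliation, not a multiple of the first eigenfunction. So (i) fails for your $w$ whenever $\phi$ is non-constant. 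Your step ``$u'(0)\in\ker$'' silently uses $L_\Gamma\phi\equiv\mathrm{const}$, which holds only when $\lambda_1=0$.

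\paragraph{The fix.} What is missing is the case split in the paper's proof. When $\Gamma$ is unstable or strictly stable, no implicit function theorem is needed: take $w(x,t)=t\phi(x)$. Then (i) and (ii) are immediate. For (iii), the mean curvature of $\Gamma_t$ equals $t\,L_\Gamma\phi+O(t^2)=ct\,\phi+O(t^2)$ with $c\neq 0$ and $\min_\Gamma\phi>0$. So for $0<|t|<\delta_1$ small it has the strict sign of $ct$. The implicit function argument is only needed when $\lambda_1=0$, and there your version of it is correct.
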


\begin{proof}
	Let $\phi>0$ be the first eigenfunction of the Jacobi operator $L_\Gamma$. 
	For any $g\in G$, since $dg(\phi\cdot\nu)$ is also the first eigenvector field and $\nu$ is $G$-invariant, we know that $\phi\circ g>0$ is also the first eigenfunction.
	Hence, $\phi\circ g= \phi$, i.e. $\phi$ is $G$-invariant, because the first eigenspace has dimension one and $\phi>0$. 
	If $\Gamma$ is unstable or strictly stable, then we simply take $w(x,t):=t\cdot \phi(x)$, which directly shows (i) and (ii). 
	Additionally, (iii) follows from the fact that $\frac{\bd}{\bd t}\vert_{t=0} \langle H_t, \nu \rangle = L_\Gamma\phi $ has a sign, where $H_t$ is the mean curvature vector field of $\Gamma_t$. 
	
	Next, consider the case that $\Gamma$ is degenerate stable, i.e. $L_\Gamma\phi=0$. 
	Let $X^G_1:=\{f\in C^\infty_G(\Gamma): \int_\Gamma f\phi =0 \}$ be the space of $G$-invariant functions that are orthogonal to $\phi$, and 
	\[\Phi: X^G_1\times [-1,1] \to X_1^G, \quad \Phi(u,t):= \frac{1}{\phi}\cdot \left( H(E(u+t\phi)) - \frac{\int_\Gamma H(E(u+t\phi))}{\Area(\Gamma)} \right), \] 
	where $E(u+t\phi):= \exp_\Gamma^\perp((u+t\phi)\nu)$, and $H$ is the mean curvature with respect to $\nu$. 
	Then,
	\[D_1\Phi_{(0,0)}(u)= \frac{1}{\phi}\cdot \left( L_\Gamma u - \frac{\int_\Gamma L_\Gamma u}{\Area(\Gamma)} \right).\]
	If $D_1\Phi_{(0,0)}(u)=0$, then $L_\Gamma u$ is a constant $c$, and $\int_\Gamma c\phi = \int_\Gamma \phi L_\Gamma u = \int_\Gamma u L_\Gamma\phi =0$, which implies $c=0$ and $u\in \{0\}= X_1^G\cap \{s\phi:s\in\mb R\}$. 
	Hence, the implicit function theorem indicates that for each $t\in (-\delta_1,\delta_1)$ with $\delta_1>0$ small enough, there exists $u_t\in X^G_1$ so that $\Phi(u_t,t)=0$. 
	Then $w(x,t):=u_t(x)+t\phi(x)$ satisfies (ii) and (iii). 
	Additionally, since $L_\Gamma\phi=0$, we have $D_2\Phi_{(0,0)}(1)=\frac{\bd}{\bd t}\vert_{t=0}\Phi(0,t) = 0$. 
	Combined with $0=\frac{\bd}{\bd t}\vert_{t=0}\Phi(u_t,t)=D_1\Phi_{(0,0)}(\frac{\bd u_t}{\bd t})+ D_2\Phi_{(0,0)}(1)$, we conclude that $D_1\Phi_{(0,0)}(\frac{\bd u_t}{\bd t})=0$, which gives (i).   
\end{proof}

Using the above lemma, we can make the following definitions. 
\begin{definition}\label{Def: 3-types neighborhood}
	Let $\hat M\subset M$ be a compact $G$-invariant domain with smooth boundary, and $\Sigma\subset \hat M$ be an embedded closed minimal $G$-hypersurface with a $G$-invariant unit normal $\nu$. 
	Let $\mu>0$, $\mc N$ be a $G$-neighborhood of $\Sigma$ in $M$, and $F$ be a $G$-equivariant diffeomorphism 
	\[ F: \Sigma\times (-\mu,\mu)\to \mc N, \quad F(g\cdot x, t)= g\cdot F(x, t),\]
	so that $F(x,0)=x$ for any $x\in\Sigma$. 
	Then we say that 
	\begin{itemize}
		\item[(i)] $\Sigma$ has a {\em contracting $G$-neighborhood} if there exist such $(\mu,\mc N, F)$ so that $F(\Sigma\times\{t\})$ has its mean curvature vector pointing towards $\Sigma$ for all $t\in (-\mu,\mu)\setminus\{0\}$;
		\item[(ii)] $\Sigma$ has a {\em expanding $G$-neighborhood} if there exist such $(\mu,\mc N, F)$ so that $F(\Sigma\times\{t\})$ has its mean curvature vector pointing away from $\Sigma$ for all $t\in (-\mu,\mu)\setminus\{0\}$;
		\item[(iii)] $\Sigma$ has a {\em mixed $G$-neighborhood} if there exist such $(\mu,\mc N, F)$ so that $F(\Sigma\times\{t\})$ has its mean curvature vector pointing towards (resp. away from) $\Sigma$ for all $t\in (-\mu,0)$ (resp. $t\in (0, \mu)$). 
	\end{itemize}
	If $\Sigma\subset\bd \hat M$, then we can similarly define the {\em contracting/expanding $G$-neighborhood in $\hat M$} as (i) and (ii) by considering $t\in (0,\mu)$ with $F(\Sigma\times \{t\})\subset \hat M$. 
	Moreover, if $\Sigma$ does not admit a $G$-invariant unit normal, then by considering the metric completion $\wti M$ of $\hat{M}\setminus\Sigma$, we have a new boundary $G$-component $\wti \Sigma$ that is a $2$-cover of $\Sigma$ with a $G$-invariant unit normal, and the {\em contracting/expanding $G$-neighborhood of $\Sigma$} can be defined as before using $\wti\Sigma\subset \bd \wti M$. 
\end{definition}

\section{Confined equivariant min-max minimal hypersurfaces}\label{Sec: confined min-max}

In this section, let $(N,g_{_N})$ be a compact Riemannian manifold with smooth boundary $\bd N$, and $G$ be a compact Lie group acting isometrically on $N$ satisfying \eqref{Eq: dimension assumption}. 
By \cite{wang2023free}*{Lemma A.1}, $N$ can be regarded as a compact $G$-domain in a closed Riemannian $G$-manifold $M$. 
Suppose $\bd N$ is a minimal $G$-hypersurface with a contracting $G$-neighborhood in $N$ (see Definition \ref{Def: 3-types neighborhood}). 
Namely, there exists a $G$-neighborhood $\mc N\subset N$ of $\bd N$ and a $G$-equivariant diffeomorphism 
\begin{align}
	F: \bd N \times [0, \hat{t} ] \to \mc N \quad {\rm with}\quad F(g\cdot x, t)=g\cdot F(x,t)
\end{align}
so that $F(\bd N\times \{0\})=\bd N$, and $F(\bd N\times \{t\})$ has its mean curvature pointing towards $\bd N$ for all $t\in (0,\hat t)$. 
Then we can define a non-compact manifold with $G$-equivariant cylindrical ends:
\begin{align}
	Cyl(N) := N \cup_\varphi (\bd N \times [0,\infty)),
\end{align}
where $\varphi: \bd N\times \{0\}\to \bd N$ is the canonical identity map, and $G$ acts on $\bd N \times [0,\infty)$ by $g\cdot (p,t)=(g\cdot p, t)$ for any $g\in G$, $p\in \bd N$ and $t\in [0,\infty)$. 
Extend the metric $h$ on $Cyl(N)$ by 
\begin{align}
	h := g_{_N}\llcorner_{\bd N} \oplus dt^2 \quad \mbox{on $\bd N \times [0,\infty)$}, \qquad {\rm and}\qquad h=g_{_N} \quad \mbox{on $N$},
\end{align}
which is $G$-invariant and Lipschitz continuous.

\subsection{Non-compact equivariant manifold with cylindrical ends}\label{section: construction of approximations} 
~

Similarly as in \cite{song2023infinite}, we define for any small $\epsilon > 0$ a compact Riemannian manifold with boundary $(N_{\epsilon}, h_{\epsilon})$ that is diffeomorphic to $N$, and converges geometrically to $(Cyl(N), h)$ in the $C^{0}$-topology as $\epsilon \rightarrow 0$. 

For $0 < \epsilon < \hat{t}$, let us set 
\[
\widetilde{N}_{\epsilon}:= N \setminus F(\partial N, [0, \epsilon)) \quad \text{with} \quad \partial \widetilde{N}_{\epsilon} = F(\partial N \times \{\epsilon\}). 
\]
By construction, $\partial \tilde{N}_{\epsilon}$ is a $G$-hypersurface with a natural $G$-invariant inward unit normal $\nu$. For a small positive number $\delta_{\epsilon} > 0$, the following map  
\begin{align*}
\tilde{\gamma}_{\epsilon}: \partial \tilde{N}_{\epsilon} \times [-\delta_\epsilon, 0] \rightarrow \mc N\\
\tilde{\gamma}_{\epsilon}(x, t) := \exp_{g_{N}}(x, t\nu) 
\end{align*}
is a well-defined $G$-equivariant diffeomorphism 
and gives the Fermi coordinates on one side of $\partial \tilde{N}_{\epsilon}$. By choosing $\delta_{\epsilon}>0$ small enough, we can ensure that 
\begin{itemize}
    \item $\lim_{\epsilon \rightarrow 0} \delta_{\epsilon} = 0$, 
    \item $\tilde{\gamma}_{\epsilon}(\partial \widetilde{N}_{\epsilon} \times [-\delta_{\epsilon}, 0]) \subset F(\partial N \times [0, \epsilon])$, 
    \item for $t \in [-\delta_\epsilon, 0]$, the hypersurface $\tilde{\gamma}_{\epsilon}(\partial \widetilde{N}_{\epsilon} \times \{t\})$ has mean curvature pointing towards $\partial N$.    
\end{itemize}

Given the notations above, set $N_0 = N$ and define for $0 < \epsilon < \hat{t}$  
\[
N_{\epsilon} := \widetilde{N}_{\epsilon} \cup \tilde{\gamma}_{\epsilon}(\partial \widetilde{N}_{\epsilon} \times [-\delta_{\epsilon}, 0]).  
\]
By construction, $\partial N_{\epsilon}$ is a $G$-hypersurface with a natural $G$-invariant inward unit normal $\nu$. For a small positive number $\hat{d} > 0$ independent of $\epsilon$, we have the Fermi coordinates on a $\hat{d}$-neighborhood of $\partial N_{\epsilon}$ given by the $G$-equivariant diffeomorphism   
\begin{align*}
\gamma_{\epsilon}: \partial N_{\epsilon} \times [-\hat{d}, \hat{d}] \rightarrow M\\
\gamma_{\epsilon}(x, t) = \exp_{g_{N}}(x, t\nu).  
\end{align*}
Note that for all $s \in [0, \delta_{\epsilon}]$, $\gamma_{\epsilon}(\partial N_{\epsilon} \times \{s\}) = \widetilde{\gamma}_{\epsilon}(\partial \tilde{N}_{\epsilon} \times \{-\delta_{\epsilon} + s\})$. Moreover, in terms of the Fermi coordinates $\gamma_{\epsilon}$, the original metric $g_{N}$ can be expressed as $g_{t} \oplus dt^2$.   

Next, for each small $\epsilon$, pick $z_{\epsilon} \in (0, \delta_{\epsilon})$ and a smooth function $\vartheta_{\epsilon}: [0, \delta_\epsilon] \rightarrow \mathbb{R}$ satisfying that  
\begin{itemize}
    \item $\vartheta_{\epsilon} \geq 1$ and $\frac{d}{dt} \vartheta_{\epsilon} \geq 0$, 
    \item $\vartheta_{\epsilon} \equiv 1$ in a neighborhood of $\delta_{\epsilon}$, 
    \item $\vartheta_{\epsilon}$ is constant on $[0, z_{\epsilon}]$,
    \item $\lim_{\epsilon \rightarrow 0} \int_{[0, \delta_{\epsilon}]} \vartheta_{\epsilon} = \infty$,
    \item $\lim_{\epsilon \rightarrow 0} \int_{[z_\epsilon, \delta_{\epsilon}]} \vartheta_{\epsilon} = 0$. 
\end{itemize}
There is a naturally induced $G$-invariant function  $\vartheta_{\epsilon}$ on $N_{\epsilon}$ defined by 
\[
\vartheta_{\epsilon}(\gamma_{\epsilon}(x, t)) = \vartheta_{\epsilon}(t) \quad \text{for all } (x, t) \in \partial N_{\epsilon} \times [0, \delta_{\epsilon}],  
\]
and extended continuously by $1$. Now we endow $N_{\epsilon}$ with the $G$-invariant smooth metric $h_{\epsilon}$: 
\[
h_{\epsilon}(x) := 
\begin{cases}
g_t(q) \oplus (\vartheta(q) dt)^2 \quad \text{for } q \in \gamma_{\epsilon} (\partial N_{\epsilon} \times [0, \delta_{\epsilon}]), \\
g_{N}(q) \quad \text{for } q \in N_{\epsilon} \setminus \gamma_{\epsilon} (\partial N_{\epsilon} \times [0, \delta_{\epsilon}]).
\end{cases}
\]
This defines the desired compact manifold with boundary $(N_{\epsilon}, h_{\epsilon})$. For our later purposes, we remark that $F(\partial N \times \{t\})$ for $t \in [\epsilon, \hat{t}]$ together with $\gamma_{\epsilon}(\partial N_{\epsilon} \times \{t\})$ for $t \in [0, \delta_{\epsilon}]$ forms a continuous strictly mean-concave foliation of $\partial N_{\epsilon}$, i.e. each leaf is smooth with non-zero mean curvature vector pointing towards $\partial N_{\epsilon}$ w.r.t. the metric $h_{\epsilon}$.

By \cite{song2023infinite}*{Lemma 6}, the following approximation result holds: $(N_{\epsilon}, h_{\epsilon}, q)$ converges geometrically to $(Cyl(N), h, q)$ in the $C^0$-topology as $\epsilon \rightarrow 0$. Moreover, the geometric convergence is smooth outside of $\partial N \subset Cyl(N)$. 
\subsection{Equivariant min-max theory in manifolds with cylindrical ends}\label{section: equivariant min-max theory in manifolds with cylindrical ends} 
~

Let $(N, g_{_N})$ be a compact Riemannian $G$-manifold with boundary $\bd N$, which is isometrically embedded as a compact $G$-domain in a closed $G$-manifold $M$. 
Define $\mc Z_n^G(N,\bd N;\mb Z_2)$ to be the space of $T\in \mf I_n^G(N;\mb Z_2)$ with $\spt(\bd T)\subset  \bd N$. 
Two elements $T,S \in \mc Z_n^G(N,\bd N;\mb Z_2)$ are said to be equivalent if $T-S\in \mf I_n^G(\bd N;\mb Z_2)$. 
Then the space of {\em relative $G$-cycles} $\mc Z^G_{n,rel}(N,\bd N;\mb Z_2)$ is defined by the space of such equivalence classes $[T]_{rel}$. 
Note that $\mc Z^G_{n,rel}(N,\bd N;\mb Z_2)$ coincides with $\mc Z^G_n(N;\mb Z_2)$ when $\bd N=\emptyset$, and the mass norm $\mf M$ and the flat metric $\mc F$ naturally defined on $\mc Z^G_{n,rel}(N,\bd N;\mb Z_2)$ as in \cite{almgren1962homotopy}*{Definition 1.20}. 
Similarly, we also define the following subspaces 
\begin{itemize}
	\item $\mc B^G_{rel}(N,\bd N;\mb Z_2):=\{[\bd\Omega]_{rel}\in \mc Z^G_{n,rel}(N,\bd N;\mb Z_2): \Omega \in \mc C^G(N) \}$;
	\item $\mc B^{G_\pm}_{rel}(N,\bd N;\mb Z_2):=\{[\bd\Omega]_{rel}\in \mc Z^G_{n,rel}(N,\bd N;\mb Z_2): \Omega \in \mc C^{G_\pm}(N) \}$;
	\item $\mc {LB}^G_{rel}(N,\bd N;\mb Z_2)$ is the space of $\tau\in \mc Z^G_{n,rel}(N,\bd N;\mb Z_2)$ so that for any $p\in N$, there exist $r\in (\inj(G\cdot p)/2, \inj(G\cdot p))$ and $\Omega\in C^G(B_r(G\cdot p))$ with $[\bd\Omega]_{rel}=\tau$ in $B_r(G\cdot p)$. 
\end{itemize}
For any $\tau \in \mc {LB}^G_{rel}(N,\bd N;\mb Z_2)$, define the {\em relative $G$-homology class} of $\tau$ by 
\begin{align*}
	[\tau]^G &:= \tau + \mc B^G_{rel}(N,\bd N;\mb Z_2) 
	\\
	&= \{\sigma \in \mc {LB}^G_{rel}(N,\bd N;\mb Z_2): \sigma = \tau + [\bd\Omega]_{rel} \mbox{ for some } \Omega\in \mc C^G(N) \},  
\end{align*}
which has three types (I)-(III) similar to Definition \ref{Def: 3-type G-homology}. 
In particular, using an almost verbatim argument, Lemma \ref{Lem: minimizer representative} and Proposition \ref{Prop: lift group action} are also valid for $[\tau]^G$. 

Note that a map $\Phi$ into $[\tau]^G$ is continuous in the $\mf M$ (resp. $\mc F$) topology if and only if $\Phi':= \Phi - \tau$ is an $\mf M$-continuous (resp. $\mc F$-continuous) map into $\mc B^G_{rel}(N,\bd N;\mb Z_2)$. 
Hence, by the equivariant version of Almgren's isomorphism \cite{wang2023free}*{(4.7)}, $[\tau]^G$ is weakly homotopically equivalent to $\mb {RP}^\infty$. 
Denote by $\bar{\lambda}$ the generator of $H^1([\tau]^G;\mb Z_2)\cong \mb Z_2$.

Let $X$ be a finite dimensional simplicial complex and $p\in \mb N$. 
An $\mc F$-continuous map $\Phi: X\to [\tau]^G$ is said to be a {\em $(G,p)$-sweepout} in $[\tau]^G$ if 
\[ \Phi^*(\bar{\lambda}^p)\neq 0 \in H^p(X;\mb Z_2). \]
Denote by $\mc P_p([\tau]^G)$ the set of all $(G,p)$-sweepouts in $[\tau]^G$ that have {\em no concentration of mass on orbits}, i.e. $\lim_{r\to 0}\sup\{\mf M(\Phi(x)\llcorner B_r(G\cdot p) ) : x\in {\rm dmn}(\Phi), p\in N\} = 0$. 
Then the {\em $(G,p)$-width in $[\tau]^G$} is 
\begin{align}
	\omega_p([\tau]^G,  g_{_N}) := \inf_{\Phi\in \mc P_p([\tau]^G)}\sup_{x\in {\rm dmn}(\Phi)} \mf M(\Phi(x))
\end{align}
Note that $|\omega_p([\tau]^G,  g_{_N}) - \omega_p([0]^G,  g_{_N})|\leq \mf M_{g_{_N}}(\tau)$. Hence, by \cite{wang2025density}*{Theorem 1.5},
\begin{align}\label{Eq: Weyl law}
    \omega_p([\tau]^G,  g_{_N})\sim p^{\frac{1}{l+1}} \qquad \mbox{as $p\to\infty$},
\end{align}
where $l+1$ is the cohomogeneity defined in \ref{Eq: cohomogeneity}. 

\begin{remark}
	The above equivariant constructions can also be made using integer rectifiable currents as in \cite{wang2023free}, which is indeed equivalent (c.f. \cite{guang2021min}*{Proposition 3.2} and \cite{wang2025density}*{Lemma 2.1}). 
\end{remark}

In non-compact $G$-manifolds, we use the following definitions of compactly supported $G$-homology classes and $(G,p)$-widths. 
\begin{definition}\label{Def: G-width in non-compact mfd}
	Let $(\wti N^{n+1},g_{_{\wti N}})$ be a complete non-compact manifold. 
	Define $\mc {LB}^G_c(\wti N;\mb Z_2)$ to be the space of local $G$-boundaries (c.f. \eqref{Eq: local boundary}) with compact support in $\wti N$. 
	Given $T\in \mc {LB}^G_c(\wti N;\mb Z_2)$, define the {\em compactly supported $G$-homology class} of $T$ by
	\[ [T]^G_c :=\{S\in \mc {LB}^G_c(\wti N;\mb Z_2): S=  T +\bd\Omega \mbox{ for some compactly supported } \Omega \in \mc C^G(\wti N)\} .\] 
	Let $K_1\subset K_2\subset \dots \subset K_i \subset \cdots$ be an exhaustion of $\wti N$ by compact $G$-invariant domains with smooth boundary so that $\spt(T)\subset\subset K_1$. 
	Then the {\em $(G,p)$-width} of $\wti N$ in $[T]^G_c$ is defined by 
	\[ \omega_p([T]^G_c, g_{_{\wti N}}) := \lim_{i\to\infty} \omega_p([\tau_i]^G, g_{_{\wti N}} )\in [0,\infty], \]
	where $\tau_i:= [T]_{rel}\in \mc {LB}^G_{rel}(K_i,\bd K_i;\mb Z_2)$, and $[\tau_i]^G := \tau_i + \mc B^G_{rel}(K_i,\bd K_i; \mb Z_2) $. 
\end{definition}

For $G$-sets with smooth boundary, one can adapt \cite{liokumovich2018weyl}*{Lemma 2.15 (1)} to equivariant settings as in \cite{wang2025density}*{Claim 1}, which implies $\omega_p([\tau_i]^G,g_{_N})$ is non-decreasing. 
Hence, $\omega_p([T]^G_c, g_{_{\wti N}})$ is well-defined and does not depend on the choices of $\{K_i\}$ or the representative $T\in [T]^G_c$.

\medskip
Using the above notations, we can now consider the equivariant min-max theory in the non-compact $G$-manifold $(Cyl(N), h)$ with cylindrical ends $\bd N\times [0,\infty)$ so that $\bd N$ is a minimal $G$-hypersurface with a contracting $G$-neighborhood in $N$. 
Let $\{\Sigma_i\}_{i=1}^m$ be the $G$-connected components of $\bd N$ so that $\Sigma_1$ has the largest area, i.e. 
\[ \Area(\Sigma_1) \geq \Area(\Sigma_{i}) \quad \mbox{for all $2\leq i \leq m$}.  \]
Given any smooth embedded closed $G$-hypersurface $\Gamma\subset \subset N$ with $\llbracket\Gamma\rrbracket \in \mc {LB}^G_c(Cyl(N) ; \mb Z_2)$, let $[\Gamma]^G_c$ be the compactly supported $G$-homology class of $\llbracket\Gamma\rrbracket $. 

We now show the following estimates for $\omega_p([\Gamma]^G_c, h)$. 

\begin{theorem}\label{Thm: width estimates in cylindrical mfd}
	There exists a constant $C=C(N,G,[\Gamma]^G_c, h)$ so that for all $p\in \mb N$, 
	\begin{itemize}
		\item[(i)] $\omega_{p+1}([\Gamma]^G_c) \geq \omega_p([\Gamma]^G_c)+ \Area(\Sigma_1)$;
		\item[(ii)] $p\Area(\Sigma_1 ) \leq   \omega_{p}([\Gamma]^G_c) \leq p\Area(\Sigma_1 ) +  C p^{\frac{1}{l+1}}$. 
	\end{itemize}
\end{theorem}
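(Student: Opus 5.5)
We follow Song's strategy for the cylindrical Weyl-type estimates (\cite{song2023infinite}*{Theorem 8}), adapted to the equivariant setting and to the prescribed class $[\Gamma]^G_c$. Throughout, we work with the exhaustion $K_L := N\cup(\bd N\times[0,L])$ of $Cyl(N)$ and the relative widths $\omega_p([\tau_L]^G,h)$ in $K_L$, letting $L\to\infty$ at the end. Since $\spt(\Gamma)\subset\subset N$, all $\tau_L$ are represented by $\llbracket\Gamma\rrbracket$. The group $G$ acts trivially on the $[0,\infty)$ factor, so the slices $\bd N\times\{t\}$ and the $G$-components $\Sigma_i\times\{t\}$ are $G$-invariant. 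Consequently, every cut-and-paste operation below stays inside $G$-invariant Caccioppoli sets.

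\emph{Upper bound in (ii).} Take an optimal family for $\omega_p([\Gamma]^G)$ in a slightly enlarged compact piece: by \eqref{Eq: Weyl law} applied to $(N,g_{_N})$, a $(G,p)$-sweepout of $N$ in $[\Gamma]^G$ has mass at most $Cp^{\frac1{l+1}}$. Restricting a relative $(G,p)$-sweepout of $K_L$ to $N$ together with the cylinder slices gives the construction we need. Write $\Phi_L := \Phi_N + \Psi$, where $\Psi$ sweeps out the cylinder $\bd N\times[0,L]$ by the family $\{\Omega: \Omega=\bd N\times\bigcup_{j}[a_j,b_j]\}$. This is a $p$-parameter family obtained from the roots of polynomials in $t$, exactly as in the Euclidean-interval sweepouts, and it has mass at most $p\Area(\bd N)$.

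Gaining $\Sigma_1$ in place of all of $\bd N$ requires more care. Following Song, we sweep only the end $\Sigma_1\times[0,L]$ by $p$ slices and use a product of the $N$-sweepout with this family via the cup-product relation $\bar\lambda^{p}=\bar\lambda^{k}\smile\bar\lambda^{p-k}$. The slices in the remaining ends are then handled by the $\mc F$-continuity in the relative topology, because slices at $\bd K_L$ vanish in $\mc Z_{n,rel}$. Optimizing the split $k$, and using the sublinear Weyl law \eqref{Eq: Weyl law} on a compact piece $N\cup(\bd N\times[0,1])$ whose constant is independent of $L$, gives $\omega_p\le p\Area(\Sigma_1)+Cp^{\frac1{l+1}}$. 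Here $C$ depends on $[\Gamma]^G$ through $\mf M(\Gamma)$, by the inequality $|\omega_p([\tau]^G)-\omega_p([0]^G)|\le\mf M(\tau)$.

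\emph{Lower bound (i).} This is the main step. Since (i) applied inductively from $\omega_0\ge0$ gives the lower bound in (ii), it suffices to prove (i). Given a $(G,p+1)$-sweepout $\Phi$ of $K_L$ with $\sup\mf M(\Phi)\le\omega_{p+1}+\eta$, we argue as in Song using the monotone contracting foliation and the cylindrical structure. For $L$ large, consider the area-nonincreasing retraction of $K_L$ onto $N\cup(\bd N\times[0,L'])$. The mean-concave foliation near $\bd N$ shows that the projection of $\Sigma_1\times[0,L]$ to $\Sigma_1$ does not increase mass. Using the coarea/slicing argument in $t$, we find that either some $\Phi(x)$ has mass $\ge\Area(\Sigma_1)\cdot(\text{something large})$, or we can build a $(G,p)$-sweepout $\Phi'$ with $\sup\mf M(\Phi')\le\sup\mf M(\Phi)-\Area(\Sigma_1)+o(1)$.

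To build $\Phi'$, we take the $1$-sweepout direction in $\Phi$, detected by $\bar\lambda$, and make it correspond to moving a slice $\Sigma_1\times\{t\}$ across the end. A Lusternik--Schnirelmann style argument then decomposes $X=X_1\cup X_2$ with $\Phi|_{X_1}$ a $(G,p)$-sweepout, on which the currents contain, in the region $\Sigma_1\times[L/3,2L/3]$, a portion homologous to a full slice. By the relative minimizing property this portion has mass $\ge\Area(\Sigma_1)$, since the slices $\Sigma_1\times\{t\}$ are area-minimizing in their $G$-homology in the product end. Cutting out this portion and pushing forward under the retraction yields a $(G,p)$-sweepout of a smaller $K_{L'}$ with mass decreased by $\Area(\Sigma_1)$. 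Letting $L\to\infty$ then gives (i).

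The main obstacle is making this cutting step rigorous in the mod 2 relative, $G$-invariant, fixed-class setting. The slice mass estimate is where $\Area(\Sigma_1)$, as opposed to other components, appears, because the extra sweepout parameter can be concentrated on the largest end. We would first try to transcribe Song's proof of \cite{song2023infinite}*{Theorem 8}, checking that each interpolation and discretization uses only $G$-equivariant tools from \cite{wang2023free}.
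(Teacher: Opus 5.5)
Your outline has genuine gaps in both halves. In each case the paper relies on a simpler mechanism that you are missing.

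\textbf{Part (i).} The step you flag yourself is never constructed: cutting a slice-like portion out of $\Phi(x)$ for $x\in X_1$ and pushing forward under a retraction to get a $(G,p)$-sweepout of a shorter $K_{L'}$. Continuity in $x$ of such a cut is exactly what is unclear. The step is also unnecessary, because the paper never modifies the sweepout.
\begin{itemize}
\item It fixes a compact piece $B_{R_p}$ whose relative $(G,p)$-width is within $\epsilon$ of $\omega_p([\Gamma]^G_c)$. Inside a larger $B_R$ it places a region $E_1\cong\Sigma_1\times[0,R]$ of the $\Sigma_1$-end, disjoint from $B_{R_p}$.
\item It then applies the Liokumovich--Marques--Neves Lusternik--Schnirelmann inequality. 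Since $\Phi^*\bar\lambda^{p+1}=\Phi^*\bar\lambda^{p}\smile\Phi^*\bar\lambda$ and restriction to a subdomain pulls $\bar\lambda$ back to $\bar\lambda$, the sets $\{x:\mathbf M(\Phi(x)\llcorner B_{R_p})<\omega_p-\epsilon\}$ and $\{x:\mathbf M(\Phi(x)\llcorner E_1)<\omega^G_1(E_1)-\epsilon\}$ cannot cover the parameter space.
\item A single point outside both sets gives (i), by additivity of mass over disjoint sets.
\end{itemize}
The real input is the width identity $\omega_1^G(\Sigma_1\times[0,R])=\Area(\Sigma_1)$ for $R$ large, taken from Song's proof. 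Your replacement, that ``a portion homologous to a full slice'' has mass $\geq\Area(\Sigma_1)$, is not the right statement. Relative to the two ends of $\Sigma_1\times[a,b]$, every slice is a relative boundary, so this homological condition carries no information. What is needed is a min--max lower bound for $(G,1)$-sweepouts of a long cylinder, and that is what makes the LS sets above work.

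\textbf{Upper bound in (ii).} Your ``product via $\bar\lambda^p=\bar\lambda^k\smile\bar\lambda^{p-k}$'' with an optimized split $k$ does not get you from $p\Area(\partial N)$ to $p\Area(\Sigma_1)$, for two reasons.
\begin{itemize}
\item A sweepout of $N$ relative to $\partial N$ is not a family of relative cycles of $K_L$ relative to $\partial K_L$. So it cannot simply be added to an end sweepout inside one cycle space. The two pieces must be glued, via Song's adaptation of the LMN gluing, which works on the $G$-invariant Caccioppoli lifts. This costs an extra $\Area(\partial N)$, absorbed into $C$, which you also omit.
\item A product of a $k$-sweepout and a $(p-k)$-sweepout need not be a $p$-sweepout. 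On $\mathbb{RP}^k\times\mathbb{RP}^{p-k}$ the pulled-back class is $a+b$, and $(a+b)^p=\binom{p}{k}a^kb^{p-k}$.
\end{itemize}
The missing idea is elementary. On $\partial N\times[0,L]$, take the $G$-invariant function $f=t+c_i$ on $\Sigma_i\times[0,L]$, with constants $c_i$ chosen so the ranges are pairwise disjoint. Then
\[
[a_0:\dots:a_p]\mapsto\big[\partial\{a_0+a_1f+\dots+a_pf^p<0\}\big]_{rel}
\]
is a $(G,p)$-sweepout of the cylinder relative to its boundary. Each member consists of at most $p$ slices in total across all ends, so it has mass at most $p\Area(\Sigma_1)$. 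Gluing this with a Weyl-law $(G,p)$-sweepout of the compact part in $[\tau_A]^G$ gives $p\Area(\Sigma_1)+Cp^{\frac1{l+1}}$, with $C$ independent of $L$ and $p$.
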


\begin{proof} 
    We closely follow the proof of \cite{song2023infinite}*{Theorem 9}, with some alterations. Thus, we only describe the necessary modifications. 

    To prove (i), recall that for $R>0$ large enough, the proof of \cite{song2023infinite}*{Theorem 9} indicates
    \[
    \omega_1^G(\Sigma_1 \times [0, R]) = \Area(\Sigma_1), 
    \]
    where $\omega_1^G(\Sigma_1 \times [0, R]):=\omega_1([0]^G)$ with $[0]^G\in \mc H^G(\Sigma_1 \times [0, R];\mb Z_2)$ is the ($G$-boundary-type) Almgren-Pitts $(G,1)$-width (c.f. \cite{wang2025density}*{Definition 3.4}) in $\Sigma_1 \times [0, R]$.   
    Given $\epsilon > 0$, choose $x_{0} \in Cyl(N)$ and $R_{p} > 0$ large enough so that for all $R \geq R_{p}$, $B_{R}:=\closure(B_R(G\cdot x_0))$ is a compact $G$-invariant subset with $\spt(\Gamma) \subset \subset B_{R}$ and    
    \[
    \omega_{p}([\tau_{B_{R}}]^G) \geq \omega_{p}([\Gamma]^G_c) - \epsilon, 
    \]
    where $\tau_{B_{R}} \in \mc {LB}^G_{rel}(B_{R},\bd B_{R};\mb Z_2)$ is induced by $\Gamma$, and 
    \begin{align*}
    [\tau_{B_{R}}]^G &:= \tau_{B_{R}} + \mc B^G_{rel}(B_{R},\bd B_{R}; \mb Z_2).
    \end{align*}
    
    Choose $R > R_p$ so that $B_{R}$ contains the disjoint union of $B_{R_p}$ and a subset $E_1$ isometric to $\Sigma_1 \times [0, R]$. By inspecting the proof of  \cite{liokumovich2018weyl}*{Theorem 3.1} (also see the proof of \cite{wang2025density}*{Theorem 4.6}), an analogous version of the Lusternik-Schnirelmann inequality holds in our setting and yields that 
    \begin{align*}
        \omega_{p+1}([\Gamma]^G_c) ~\geq~ \omega_{p}([\tau_{B_{R}}]^{G}) + \omega_{1}^G(E_1) -2\epsilon ~\geq~ \omega_{p}([\Gamma]^G_c) + \Area(\Sigma_1) -  3\epsilon. 
    \end{align*}
    Letting $\epsilon \rightarrow 0$ completes the proof of (i). 
    \medskip 
    
    We proceed to prove (ii) using the gluing technique of Liokumovich-Marques-Neves \cite{liokumovich2018weyl}. By assumptions, we have $Cyl(N) \setminus A = \partial N \times [0, \infty)$ where $A$ is a compact $G$-invariant domain with smooth boundary containing $\Gamma$.  Define $B_{L} := \partial N \times [0, L]$ for $L > 0$, whose boundaries intersect the boundaries of $A$ along the closed $G$-hypersurface $\partial N \times \{0\}$. 
    Denote by $\tau_A$ and $\tau_{A\cup B_L}$ the relative $G$-cycles induced by $\Gamma$ in $A$ and $A\cup B_L$ respectively. 
    Fix $p \in \mathbb{N}$, and
    we know from the Weyl law for equivariant volume spectrum (c.f. \eqref{Eq: Weyl law} and \cite{wang2025density}*{Theorem 1.5}) that there exist a uniform constant $C>0$ and a $(G, p)$-sweepout $\Phi_{1}: \mb{RP}^{p} \rightarrow [\tau_A]^G $ in $A$ with $\Phi_1 \in \mc P_p([\tau_A]^G)$ satisfying that  
    \[
    \max_{x \in \mb{RP}^{p}} \mathbf{M}(\Phi_{1}(x)) \leq C p^{\frac{1}{l + 1}}, 
    \]
    where $l+1$ is the cohomogeneity \eqref{Eq: cohomogeneity}. 
    Additionally, by the proof of \cite{song2023infinite}*{Theorem 9} there exist a $G$-equivariant Morse function (in the sense of \cite{wasserman1969equivariant}) $f: B_{L} \rightarrow \mathbb{R}$ and a map $\Phi_{2}: \mb{RP}^{p} \rightarrow \mathcal{B}_{rel}^{G}(B_L, \partial B_L; \mathbb{Z}_2)$ given by
    \[
    \Phi_{2}([a_{0}, a_{1}, \ldots, a_{p}]) = [\partial\{x\in B_L: a_{0} + a_{1} f(x) + \cdots + a_{p} f(x)^{p} < 0\}]_{rel} 
    \]
    so that $\Phi_2$ is a ($G$-boundary-type) $(G,p)$-sweepout satisfying that 
    \[
    \max_{x \in \mathbb{RP}^{p}} \mathbf{M}(\Phi_{2}(x)) \leq p \Area(\Sigma_1).  
    \]
    To glue the $p$-sweepouts $\Phi_1 $ and $\Phi_2$, we follow the proof in \cite{song2023infinite}*{Theorem 9, Page 875} with $\mc C^G,\mc B^{G}_{rel}$ in place of $\mf I_{n+1},\mc Z_{n,rel}$ respectively, use $\Phi_1(x)-\tau_A - \bd Z$ in the definition of $SX_0$, and add $\tau_A$ to the $G$-boundary-type map $\varkappa_L$. 
    Then we similarly obtain a $(G,p)$-sweepout $\Phi: \mb{RP}^{p} \rightarrow [\tau_{A \cup B_{L}}]^G \in \mc{P}_p([\tau_{A \cup B_{L}}]^G)$ such that for some constant $C=C(N,G,[\Gamma]^G_c, h) > 0$,  
    \begin{align*}
        \max_{x \in \mb{RP}^{p}} \mathbf{M}(\Phi(x)) &\leq \max_{x \in \mb{RP}^{p}} \mathbf{M}(\Phi_{1}(x)) +  
        \max_{x \in \mb{RP}^{p}} \mathbf{M}(\Phi_{2}(x)) + \Area(\bd N)\\ 
        &\leq C p^{\frac{1}{l + 1}} +  p\Area(\Sigma_1) + C\\
        &\leq C p^{\frac{1}{l + 1}} +  p\Area(\Sigma_1).  
    \end{align*}
It follows that 
\[
\omega_{p}([\Gamma]^G_c) = \lim_{L \rightarrow \infty} \omega_{p}([\tau_{A \cup B_{L}}]^G) \leq p\Area(\Sigma_1 ) +  C p^{\frac{1}{l+1}},    
\]
which completes the whole proof. 
\end{proof}

Using the approximations $(N_\epsilon, h_\epsilon)$, we show that $\omega_p([\Gamma]^G_c)$ can be realized by the area of some closed minimal $G$-hypersurfaces in the interior of $N$. 

\begin{theorem}\label{Thm: confined min-max hypersurface}
	Let $(Cyl(N), h)$ and $\Gamma\subset\subset N$ be given as before. 
	For any $p\in\mb N$, there exist disjoint smooth closed $G$-connected embedded minimal $G$-hypersurfaces $\{\Gamma_i\}_{i=1}^{l_p}$ contained in $N\setminus \bd N$ and $\{m_i\}_{i=1}^{l_p}\subset \mb N$ so that 
	\[ \omega_p([\Gamma]^G_c,h) = \sum_{i=1}^{l_p} m_i \Area(\Gamma_i). \] 
	Moreover, let $\tau:=[\Gamma]_{rel}\in \mc {LB}^G_{n,rel}(N,\bd N;\mb Z_2)$ and $[\tau]^G:=\tau+\mc B^G_{rel}(N,\bd N;\mb Z_2)$. We also have
	\begin{itemize}
		\item[(i)] if $\Gamma$ is $G$-separating in $N$, then every $(G, 1)$-sided $\Gamma_i$ has even multiplicity $m_i$;
		\item[(ii)] if $\Gamma$ is not $G$-separating in $N$, then there is a subset $\mc I\subset\{1,\dots, l_p\}$ so that $\llbracket \cup_{i\in \mc I}\Gamma_i \rrbracket_{rel} \in [\tau]^G$. In particular, $\llbracket \Gamma_1 \rrbracket_{rel} \in [\tau]^G$ has odd multiplicity $m_1$ when $l_p = 1$.  
	\end{itemize}
\end{theorem}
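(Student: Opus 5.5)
We follow the strategy of \cite{song2023infinite}*{Theorem 10}, working with the compact approximations $(N_\epsilon,h_\epsilon)$ of Section \ref{section: construction of approximations} and then passing to the limit $\epsilon\to 0$.

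\textbf{Step 1: widths on the approximations.} Fix $p$. For each small $\epsilon$, let $\tau_\epsilon$ be the relative cycle induced by $\Gamma$ in $(N_\epsilon,\bd N_\epsilon)$; this makes sense since $\Gamma\subset\subset N$ lies in the region where $h_\epsilon=g_{_N}$. Consider the relative widths $\omega_p([\tau_\epsilon]^G,h_\epsilon)$. Since $(N_\epsilon,h_\epsilon)$ converges to $(Cyl(N),h)$ and the elongated collars exhaust the ends, the monotonicity of widths under inclusion and the area-comparison arguments of \cite{song2023infinite}*{Theorem 10} should give
\[\omega_p([\tau_\epsilon]^G,h_\epsilon)\to\omega_p([\Gamma]^G_c,h).\]
The lower bound comes from restricting to the compact exhaustion $K_i$; the upper bound comes from pulling sweepouts of $K_i\cong$ a domain in $N_\epsilon$ back and forth with bi-Lipschitz control on the thin parts $\int_{z_\epsilon}^{\delta_\epsilon}\vartheta_\epsilon\to0$.

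\textbf{Step 2: equivariant relative min-max in $N_\epsilon$, confined to the interior.} The boundary $\bd N_\epsilon$ has a strictly mean-concave $G$-foliation in its collar (the leaves of $F$ and $\gamma_\epsilon$ with mean curvature pointing toward $\bd N_\epsilon$). We run the equivariant Almgren--Pitts min-max for relative $G$-cycles in the prescribed class $[\tau_\epsilon]^G$, using \cite{wang2023free,wang2026homology} together with Proposition \ref{Prop: lift group action} for types (II)/(III). The maximum principle for stationary varifolds against a mean-concave foliation (White) confines the min-max varifold away from the foliated collar, hence into the region where $h_\epsilon=g_{_N}$. Under \eqref{Eq: dimension assumption} equivariant regularity then gives
\[\omega_p([\tau_\epsilon]^G,h_\epsilon)=\sum_i m_i^\epsilon\Area(\Gamma_i^\epsilon),\]
with the $\Gamma_i^\epsilon$ smooth closed $G$-connected minimal $G$-hypersurfaces in $\interior(N)$, at a definite distance from $\bd N$ in $N$.

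\textbf{Step 3: limit $\epsilon\to0$.} The masses are uniformly bounded and all $\Gamma^\epsilon_i$ lie in a fixed compact part of $N$ where the metric is fixed. By Sharp/Schoen--Simon compactness adapted equivariantly, after a subsequence the varifolds converge to $\sum m_i\Gamma_i$. One obstacle is that the limit might touch $\bd N$; here we use the contracting neighborhood of $\bd N$. A limit touching $\bd N$ would have to coincide with components of $\bd N$ by the maximum principle, but the uniform barrier from the foliation $F(\bd N\times\{t\})$, $t\in(0,\hat t)$, keeps the supports in $N\setminus F(\bd N\times[0,t_0))$ uniformly in $\epsilon$. This gives the area identity.

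\textbf{Parity and homology, (i) and (ii).} This is the step we expect to be the main obstacle. We would follow the argument of \cite{wang2026homology} for the equivariant multiplicity/homology control. The min-max cycle in $[\tau_\epsilon]^G$ is a limit of cycles $T_j=\tau_\epsilon+\bd\Omega_j$ converging as currents to some $T\in[\tau_\epsilon]^G$ with $|T|\le V$, and mod 2 we get $T=\sum (m_i \bmod 2)\llbracket\Gamma_i\rrbracket$ up to boundary pieces, which are excluded by confinement.

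In case (i), $\tau$ is a relative $G$-boundary, so $T$ is a relative $G$-boundary. A $(G,1)$-sided component cannot appear with odd multiplicity in a $G$-boundary: it would inherit a $G$-invariant normal from $\Omega$, as in Lemma \ref{Lem: intersect G1-sided}. Hence its $m_i$ is even.

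In case (ii), $[\tau]^G\neq[0]^G$, so $T\ne0$ and its support is a union $\cup_{i\in\mc I}\Gamma_i$ with odd $m_i$ lying in $[\tau]^G$. When $l_p=1$, we get $\mc I=\{1\}$ and $m_1$ is odd. The delicate point is justifying that the current limit is exactly the odd-multiplicity part, which requires controlling the flat convergence alongside the varifold convergence. For this we would use the $\mf F$-metric on $\mc V$ paired with currents, as in Pitts' almost minimizing pairs.
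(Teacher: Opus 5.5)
There is a genuine gap in Steps 2--3, and it is exactly where the substance of the theorem lies.

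The leaves $F(\bd N\times\{t\})$ and $\gamma_\epsilon(\bd N_\epsilon\times\{t\})$ have mean curvature pointing \emph{toward} $\bd N_\epsilon$. So the core is mean-concave, and the leaves act as barriers only from the collar side. The maximum principle therefore shows only that every component of $\Gamma_\epsilon$ meeting the collar must also reach $F(\bd N\times\{\hat t\})$. It does not keep $\Gamma_\epsilon$ in the region where $h_\epsilon=g_{_N}$, and it does not keep the limit at a definite distance from $\bd N$. As a sanity check: if this foliation alone gave such confinement, the same argument would confine min-max varifolds of $N$ itself away from its stable minimal boundary, and the cylindrical ends would be unnecessary.

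In the paper, $\Gamma_\epsilon$ is kept off $\bd N_\epsilon$ only by monotonicity combined with the diverging length $\int\vartheta_\epsilon\to\infty$ of the stretched collar. The limit $V_\infty$ may a priori charge $\bd N$, where $h$ is only Lipschitz. What your argument is missing is:
\begin{itemize}
\item[(a)] regularity of $V_\infty$ on $Cyl(N)\setminus\bd N$. The paper gets this by showing the good $G$-replacement property survives a convergence that is smooth only away from $\bd N$; a Sharp/Schoen--Simon compactness in a fixed metric is not available near $\bd N$.
\item[(b)] exclusion of $Cyl(N)\setminus N$. A component there would be a slice $\bd N\times\{\delta\}$, which misses $F(\bd N\times\{\hat t\})$.
\item[(c)] above all, the analysis of \cite{song2023infinite}*{Theorem 10, Steps 3, 4}. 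One shows $V_\infty\llcorner\{(x,S):x\in\bd N,\ S\neq T_x\bd N\}=0$, then that $V_\infty$ is $g_{_N}$-stationary and integral across $\bd N$. Only then does White's maximum principle place $\spt\|V_\infty\|$ in $N\setminus\bd N$.
\end{itemize}

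The parity step has the gap you half-identified. For a general min-max sequence $T_j$ with $|T_j|\to V$, the flat limit need not equal $\sum(m_i\bmod 2)\llbracket\Gamma_i\rrbracket$. That identification requires first-variation control so that \cite{White09} applies. Invoking Pitts' almost minimizing pairs points the right way, but you need the concrete mechanism. The paper localizes as follows:
\begin{itemize}
\item[(1)] At $p\in\Gamma_i$, $V_\infty$ has the good $G$-replacement property in $B_s(G\cdot p)$, and its replacement coincides with $V_\infty$.
\item[(2)] This replacement is a limit of replacements of $V_{\epsilon_k}$, which are induced by locally mass-minimizing $G$-boundaries $\bd\Omega_j$.
\item[(3)] Their flat limit near $\Gamma_i$ is $\bd\Omega_\infty=m_i'\llbracket\Gamma_i\rrbracket$. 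Hence $m_i'=0$ when $\Gamma_i$ is $(G,1)$-sided, while White's theorem gives $m_i\equiv m_i'\pmod 2$.
\end{itemize}
For (ii), your observation that the flat limit is nonzero because $0\notin[\tau]^G$ is an acceptable substitute for the paper's argument via $\Omega_\infty\in\mc C^{G_\pm}(U)$ and $\Vol(\Omega_\infty)=\Vol(U)/2$. You still need to handle the diagonal limit $\epsilon\to 0$ and the restriction to $U$. With that, the constancy theorem gives the set $\mc I$. However, the odd multiplicity of $\Gamma_1$ when $l_p=1$ again needs the replacement argument above.
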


\begin{proof}
Recall that in Section \ref{section: construction of approximations} we have constructed compact smooth approximations $(N_{\epsilon}, h_{\epsilon})$ of $(Cyl(N), h)$. Fix $p \in \mathbb{N}$. Applying the $G$-equivariant free boundary min-max theorem (cf. \cite{wang2023free}) gives a $G$-invariant varifold $V_{\epsilon}$ of total mass close to $\omega_{p}([\Gamma]_c^{G})$.  
Moreover, $\spt (\|V_{\epsilon}\|) = \Gamma_{\epsilon}$ is a compact, almost properly embedded free boundary minimal $G$-hypersurface. Note that the $G$-boundary-type assumption is only used locally for the regularity in \cite{wang2023free}, which can be easily adapted for local $G$-boundaries.
As $F(\partial N \times \{t\})$ for $t \in [\epsilon, \hat{t}]$ together with $\gamma_{\epsilon}(\partial N_{\epsilon} \times \{t\})$ for $t \in [0, \delta_{\epsilon}]$ forms a strictly mean-concave foliation of $\partial N_{\epsilon}$, any component of $\Gamma_{\epsilon}$ that intersects $\gamma_{\epsilon}(\partial N_{\epsilon} \times [0,\delta_\epsilon])$ must also intersect $F(\partial N \times \{\hat{t}\})$. 
By the monotonicity formula, there exist $q \in N \setminus \partial N$ and $R > 0$ such that $\Gamma_{\epsilon} \subset B_{R}(G \cdot q)$ w.r.t. the metric $h_{\epsilon}$ for all $\epsilon > 0$ small enough. In particular, $\Gamma_\epsilon$ is a closed embedded minimal $G$-hypersurface in $\interior(N_\epsilon)$.

As $\epsilon \rightarrow 0$, we have $\omega_{p}([\tau_{N_\epsilon}]^G,h_\epsilon) \rightarrow \omega_{p}([\Gamma]^{G}_c, h)$, where $\tau_{N_\epsilon}:=[\Gamma{\llcorner N_\epsilon}]_{rel}\in \mc {LB}^G_{rel}(N_\epsilon,\bd N_\epsilon;\mb Z_2)$. Then up to a subsequence $\epsilon_k\to 0$, 
\[\lim_{k\to\infty} V_{\epsilon_k}= V_\infty \in \mc V^G_n(Cyl(N)) \quad {\rm with}\quad \|V_\infty\|(Cyl(N))=\omega_{p}([\Gamma]^{G}_c,h).\]
Since each $V_{\epsilon_k}$ is produced by min-max, we know that $V_{\epsilon_k}$ is automatically a min-max $(c, G)$-hypersurface as in Definition 3.10   
of \cite{wang2025density} for some $c=c(p) \in \mathbb{N}$ independent of $\epsilon$.   
Hence for any $c$-admissible family of $G$-annuli $\mathcal{A}$ (i.e. $\mathcal{A}=\{A_{s_i,t_i}(G\cdot x)\}_{i=1}^c$ with $0<s_i<t_i<s_{i+1}/2<t_{i+1}/2$), each $V_{\epsilon_k}$ has good $G$-replacement property in some $A \in \mathcal{A}$. Recall that as $\epsilon \rightarrow 0$, $(N_{\epsilon}, h_{\epsilon}, q)$ converges geometrically to $(Cyl(N), h, q)$ in the smooth topology outside of $\partial N \subset Cyl(N)$. Following the proof of \cite{wang2025density}*{Theorem 3.13} (also see \cite{wang2023G-index}*{Remark 5.5}), we know that $V_\infty \llcorner (Cyl(N)\setminus \partial N)$ has good $G$-replacement property in some $A \in \mathcal{A}$ for any $c$-admissible family of $G$-annuli $\mc A$ in $Cyl(N)\setminus \partial N$, i.e. $V_\infty \llcorner (Cyl(N)\setminus \partial N)$ is a min-max $(c, G)$-hypersurface. 
Then the regularity result \cite{wang2023G-index}*{Theorem 4.18} implies that the restriction of $\spt (\|V_\infty\|) = \Sigma$ to $Cyl(N)\setminus \partial N$ is a minimal $G$-hypersurface. The maximum principle says that if $\Sigma \cap (Cyl(N) \setminus N) \neq \emptyset$, $\Sigma$ would be a connected component of some slice $\partial N \times \{\delta\}$, which is a contradiction since $\Sigma$ now has to intersect $F(\bd N\times \{\hat{t}\})$. 
As a consequence, $\Sigma$ is contained in the compact set $(N, g_N)$.  

Next, we proceed as in \cite{song2023infinite} and show that $V_\infty$ is $g_N$-stationary. Denote by $\divergence^0$ the divergence computed in the metric $g_N$. It suffices to check that 
\[
\delta V_{\infty} = \int \divergence^0_S X(x) dV_{\infty}(x, S) = 0,  
\]
where $X$ is a vector field supported near and smooth up to $\partial N$. By choosing the same variational vector field $Y^{\epsilon}$ as in \cite{song2023infinite}*{P.19} and using the $h_{\epsilon}$-stationarity of $V_{\epsilon}$ (equivalent to the $G$-stationarity of $V_{\epsilon}$ w.r.t. the metric $h_\epsilon$ \cite{liu2021existence}*{Lemma 2.2}), we obtain that 
\[
V_{\infty} \llcorner \{(x, S): x \in \partial N, S \neq T_x \partial N\} = 0. 
\]
Now split $X$ as the orthogonal part $X_{\perp}$ and the parallel part $X_{||}$ w.r.t. $\frac{\partial}{\partial t}$. Based on the $h_{\epsilon}$-stationarity of $V_{\epsilon}$ and careful estimates using the fact above, we conclude that 
\[
\delta V_{\infty} = \int \divergence^0 X^{0}_{\perp} dV_{\infty} + \int \divergence^0 X^{0}_{||} dV_{\infty} = 0.  
\]
We refer to \cite{song2023infinite}*{Theorem 10, Step 3,4} for the details. 
From \cite{song2023infinite}*{Proposition 3} we further obtain $V_{\infty}$ as a $g_N$-stationary integral varifold. Then the maximum principle by White \cite{white2010maximum} implies that $\spt(\|V_\infty\|)=\Sigma=\cup_{i=1}^{l_p}\Gamma_i$ is a closed minimal $G$-hypersurface confined in the core $N \setminus \partial N$. In particular, it follows from the convergence $V_{\epsilon_k}\to V$ in $\interior(N)$ and the monotonicity formula that $\Gamma_{\epsilon_k}=\spt(||V_{\epsilon_k}||) $ is contained in a $G$-domain $U_{\epsilon_k}\subset \interior(N_{\epsilon_k})$ that is isometric to some $ U_k\subset \interior(N)$ for $k$ large enough.

\medskip
Suppose that $\Gamma$ is $G$-separating in $N$, and $\Gamma_i$ is a $(G,1)$-sided component of $\spt(\|V_\infty\|)$.  
We will follow the general scheme of \cite{zhou2015minmaxRicci}*{Section 6} (also see \cite{wang2024Ricci}*{Theorem 3.8}) to show that $\Gamma_i$ has even multiplicity $m_i$. Since $V_{\infty}$ is a min-max $(c, G)$-hypersurface supported in $\interior(N)$, we know that for any $c$-admissible family of $G$-annuli $\mathcal{A}$ in $N$, $V_{\infty}$ has good $G$-replacement property in some $A \in \mathcal{A}$. By \cite{wang2023G-index}*{Lemma 4.17}, for any $p \in N$, there exists $r(G \cdot p) > 0$ so that $V_\infty$ has good $G$-replacement property in $A_{s, t}(G \cdot p)$ for all $0 < s < t < r(G \cdot p)$. Hence we can take a point $p \in \Gamma_i$ and $r > 0$ small enough so that $V_{\infty}$ has good $G$-replacement property in $B_{2r}(G \cdot p)\subset \interior(N)$, $\bd B_t(G\cdot p)$ is strictly mean convex for $t\in (0,2r)$, and 
\[
    \spt(||V_\infty||) \cap B_{2r}(G \cdot p) =: \Sigma \cap B_{2r}(G \cdot p) = \spt(||\Gamma_i||) \cap B_{2r}(G \cdot p).
\]
Let $s \in (\frac{r}{2}, r)$ so that $\bd B_s(G\cdot p)$ is transversal to $\spt(\|V_\infty\|)$, and $V^{*}$ be a $G$-replacement of $V_{\infty}$ in $\Clos(B_s(G \cdot p))$. 
Then by \cite{wang2023G-index}*{Proposition 4.19}, $V^{*} = V_{\infty}$. Recall that the min-max $(c,G)$-hypersurface $V_\infty$ is given by the compactness theorem \cite{wang2025density}*{Theorem 3.13} (see also \cite{wang2023G-index}*{Theorem 5.3}) applied to $V_{\epsilon_k}$. 
Hence, it follows from the proof of the compactness theorem \cite{wang2025density}*{Theorem 3.13} that $V^*=\lim_{k\to \infty} V_{\epsilon_k}^*$ up to a subsequence, where $V_{\epsilon_k}^*$ is the $G$-replacement of $V_{\epsilon_k}$ in $\closure(B_s(G\cdot p))$. 
Combined with the constructions of the $G$-replacements $V_{\epsilon_k}^*$ (c.f. \cite{wang2023free}*{Proposition 5.3, 5.4, Lemma 5.5}) and the fact that $\Gamma=\bd\Omega\llcorner \interior(N)$ for some open $G$-set $\Omega\subset N$,
there exist sequences $\{\epsilon_j\}\subset \{\epsilon_k\}$ and $\{\tau_j\}_{j\in\mb N}\subset \mc Z^G_{n,rel}(N_{\epsilon_j},\bd N_{\epsilon_j};\mb Z_2) $ satisfying that $\lim_{j\to\infty}\epsilon_j = 0$,
\begin{enumerate}
    \item $\tau_j = [\partial \Omega_j]_{rel}$ is locally mass minimizing in $B_{s}(G \cdot p)$ with $\Omega_j \in \mathcal{C}^{G}(N_{\epsilon_j})$;     
    \item $|T_j| \rightarrow V^*= V_{\infty}$ in the sense of varifolds, where $T_j$ is the rectifiable mod $2$ $G$-invariant $n$-current induced by $\bd \Omega_j\llcorner \interior(N_{\epsilon_j})$.  
\end{enumerate}

Using the slicing theory \cite{simon1983lectures}*{28.5}, one can further choose $s\in (\frac{r}{2},r)$ and a small $G$-neighborhood $U\subset\subset \interior(N)$ of $\Gamma_i$ so that
\begin{itemize}
    \item $\spt(\|V_\infty\|)\cap U = \Gamma_i$ and $B_s(G\cdot p)\subset\subset U$;
    \item $\Omega_j\llcorner U \in\mc C^G(U)$ and $T_j \llcorner U=\bd ( \Omega_j\llcorner U)$ for all $j\in\mb N$;
    \item $T_j \llcorner B_{s}(G \cdot p)=\bd \Omega_j\llcorner B_s(G\cdot p)$ converges (up to a subsequence) to some $T_\infty\llcorner B_s(G\cdot p)=\bd \Omega_\infty\llcorner B_s(G\cdot p)\in \mf I^G_n(B_s(G\cdot p),\mb Z_2)$,
\end{itemize}
where $\Omega_\infty=\lim_{j\to\infty}\Omega_k\llcorner U\in \mc C^G(U)$ and $T_\infty=\bd \Omega_\infty\in \mf I^G_n(U;\mb Z_2)$. 
By the constancy theorem and the first bullet, we have $T_\infty=m_i'\llbracket \Gamma_i\rrbracket$ for some $m_i'\in\mb Z_2$. 
Since $T_\infty=\bd\Omega_\infty$, we know $\spt(\|T_\infty\|)$ admits a $G$-invariant unit normal, which implies $m_i'=0$ as $\Gamma_i$ is $(G,1)$-sided. 
By comparing $T_{\infty} \llcorner B_{s}(G \cdot p)$ with $V_{\infty} \llcorner B_s(G \cdot p)$ and adapting \cite{White09}*{Theorem 1.1}, we conclude that $m_i' \equiv m_i$ mod $2$, and thus $m_i$ is even whenever $\Gamma_i$ is $(G, 1)$-sided.  

\medskip
Now suppose that $\Gamma$ is not $G$-separating in $N$. 
Then, we only consider the case that   
$\Gamma=\bd \Omega\llcorner \interior (N)$ for some $\Omega\in\mc C(N)\setminus\mc C^G(N)$, i.e. $[\Gamma]_{rel}\in \mc B_{rel}(N,\bd N;\mb Z_2)\setminus\mc B^G_{rel}(N,\bd N;\mb Z_2)$ \footnote{\,The case that 
$[\Gamma]_{rel}\notin \mc B_{rel}(N,\bd N;\mb Z_2)$ is similar by Proposition \ref{Prop: lift group action}.}. By the proof of Proposition \ref{Prop: lift group action}(i), we have $\Omega\in \mc C^{G_\pm}(N)$ for an index $2$ Lie subgroup $G_+<G$ and $G_-=G\setminus G_+$.
We will argue similarly as in \cite{lwy24}*{Proposition 5.4} to get (ii). 
As 
$[\Gamma\llcorner N_{\epsilon}]_{rel}\in \mc B^{G_\pm}_{rel}(N_\epsilon,\bd N_\epsilon;\mb Z_2)$, for each small $\epsilon$ there exists a sequence 
$\tau^{\epsilon}_j = [\Gamma \llcorner N_{\epsilon}]_{rel} + [\partial R^{\epsilon}_j]_{rel} = [\partial \Omega^{\epsilon}_j]_{rel}\in \mc B^{G_\pm}_{rel}(N_{\epsilon},\bd N_{\epsilon};\mb Z_2)$, $j\in\mb N$, satisfying that 
\begin{enumerate}
    \item $R^\epsilon_j \in \mathcal{C}^{G}(N_{\epsilon})$ for all $j\in\mb N$; 
    \item $\Omega^{\epsilon}_j \in \mathcal{C}^{G_{\pm}}(N_{\epsilon})$ for all $j\in\mb N$;  
    \item $|T_j^{\epsilon}| \rightarrow V_{\epsilon}$ in the sense of varifolds, where $T_j^\epsilon$ is the rectifiable mod $2$ $G$-invariant $n$-current induced by $\bd \Omega^{\epsilon}_j\llcorner \interior(N_{\epsilon})$. 
\end{enumerate} 
Using the slicing theory \cite{simon1983lectures}*{28.5}, one can slightly shrink $N$ to a smooth $G$-domain $U\subset\subset \interior(N)$ containing $\spt(\|V_\infty\|)\cup \Gamma$ so that $R_j^{\epsilon_k}\llcorner U \in \mc C^G(U)$, $\Omega_j^{\epsilon_k}\llcorner U \in \mc C^{G_\pm}(U)$, and $T_j^{\epsilon_k}\llcorner U = \bd (\Omega_j^{\epsilon_k}\llcorner U)\in \mf I^G_n(U;\mb Z_2)$ for all $j,k\in\mb N$. By the compactness theorem, we can take a subsequence $\{\epsilon_{j}\}\subset \{\epsilon_k\}$ with $\lim_{j\to\infty}\epsilon_j=0$ so that $R_j^{\epsilon_j}\llcorner U\to R_\infty\in \mc C^G(U)$, $\Omega_j^{\epsilon_j}\llcorner U\to \Omega_\infty \in\mc C^{G_\pm}(U)$, and $T_j^{\epsilon_j}\llcorner U \to T_\infty=\bd\Omega_\infty=\llbracket \Gamma \rrbracket + \bd R_\infty\in \mc B^{G_\pm}(U;\mb Z_2)$ as $j\to\infty$. 
Combining the varifold convergence $\lim_{j\to\infty}|T_j^{\epsilon_j}\llcorner U|= V_\infty\llcorner U = V_\infty$ with the constancy theorem, one can conclude that  $T_\infty=\sum_{i=1}^{l_p}m_i'\llbracket \Gamma_i \rrbracket$ for some $\{m_i'\}\subset\mb Z_2$. 
Since $\Vol(\Omega_{\infty}) = \Vol(U)/2$, there exists a non-empty collection $ \cup_{i\in \mc I}\Gamma_i $ so that $0\neq \llbracket \cup_{i\in \mc I}\Gamma_i \rrbracket = \bd\Omega_\infty\in \mc B^{G_\pm}(U;\mb Z_2)$, and thus $[ \cup_{i\in \mc I}\Gamma_i ]_{rel} = [\partial\Omega_{\infty}]_{rel} = [\Gamma\llcorner U]_{rel} + [\partial R_{\infty}]_{rel}\in \mc B^{G_\pm}_{rel}(U,\bd U;\mb Z_2)$. 
Let $R_\infty'$ be the union of $R_\infty$ and the $G$-components of $N\setminus U$ that intersect $\closure(R_\infty)$. 
Hence, $[ \cup_{i\in \mc I}\Gamma_i ]_{rel}= [\Gamma \llcorner N]_{rel} + [\partial R_{\infty}']_{rel}\in \mc B^{G_\pm}_{rel}(N,\bd N;\mb Z_2)$, which implies $[ \cup_{i\in \mc I}\Gamma_i ]_{rel} \in [\tau]^G$. 
In particular, when $l_p = 1$, $\llbracket \cup_{i\in \mc I}\Gamma_i \rrbracket = \llbracket \Gamma_1 \rrbracket$ and $m_1'=1$. 
One can derive as before to show that $m_1\equiv m_1'$ mod $2$, and thus $m_1$ is odd.
\end{proof}

\section{Proof of Main Theorem}

Unless otherwise stated, all the $G$-hypersurfaces in this section are assumed to be closed and embedded. 

\subsection{Cutting constructions}

We first introduce the equivariant cutting constructions.  

\begin{lemma}\label{Lem: G-cutting}
	Given a compact Riemannian $G$-manifold $(N,g_{_N})$ with (possibly empty) boundary $\bd N$ satisfying \eqref{Eq: dimension assumption}, suppose $\bd N$ is minimal with a contracting $G$-neighborhood in $N$. 
	Let $\Gamma\subset \interior(N)$ be a $G$-connected closed embedded minimal $G$-hypersurface with a contracting or mixed $G$-neighborhood. 
	Then we can cut $N$ along some minimal $G$-hypersurfaces and get a different $G$-manifold $(N_1,g_{_{N_1}})$ so that 
	\begin{itemize}
		\item[(i)] $\bd N_1$ has a contracting $G$-neighborhood in $N_1$;
		\item[(ii)] $\interior(N_1)$ is $G$-equivariantly isometric to a domain in $N$;
		\item[(iii)] $\bd N_1$ has at least one boundary $G$-component that is isometric to $\Gamma$ (if $\Gamma$ is $(G,2)$-sided) or the $(G,2)$-sided double cover of $\Gamma$ (if $\Gamma$ is $(G,1)$-sided).
	\end{itemize}
	Moreover, suppose $\Gamma$ is not $G$-separating but has a mixed $G$-neighborhood, and $\Sigma\subset \interior(N)$ is a minimal $G$-hypersurface disjoint from $\Gamma$ with an expanding $G$-neighborhood. 
	Then after the cutting procedure and identifying $\interior(N_1)$ as a domain in $\interior(N)$, we can make (i)-(iii) hold and every $G$-connected component of $\Sigma$ lies either in $\interior(N_1)$ or in $\interior(N)\setminus\closure(\interior(N_1))$. 
\end{lemma}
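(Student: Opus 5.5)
We follow the strategy of \cite{song2023infinite}*{Lemma 11}, made $G$-equivariant with Lemma \ref{Lem: local foliation} and equivariant minimization.

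\textbf{Step 1: cut along $\Gamma$.} Let $\hat N$ be the metric completion of $N\setminus\Gamma$. By Remark \ref{Rem: locally G-boundary-type for G,2-sided}, $\hat N$ is a compact $G$-manifold whose boundary is $\bd N$ together with one or two new $G$-components. These are copies of $\Gamma$ if $\Gamma$ is $(G,2)$-sided, or the $(G,2)$-sided double cover $\wti\Gamma$ if $\Gamma$ is $(G,1)$-sided. Apply Lemma \ref{Lem: local foliation} to each new boundary $G$-component; this uses the $G$-invariant unit normal and the $G$-invariant first eigenfunction. We obtain a $G$-invariant foliation $\{\Gamma_t\}$ on the $\hat N$-side whose leaves have mean curvature of a fixed sign.

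\textbf{Step 2: sort the new boundary components.}
\begin{itemize}
\item If $\Gamma$ has a contracting $G$-neighborhood, every new boundary component is contracting in $\hat N$. Set $N_1:=\hat N$; then (i)--(iii) are immediate.
\item If $\Gamma$ is mixed, one side is contracting and the other expanding (or degenerate, with leaves minimal). On the expanding side, choose a leaf $\Gamma_{t_0}$, $t_0>0$ small. It is a strictly mean convex $G$-barrier pointing away from $\Gamma$. Take the $G$-component $W$ of $\hat N$ lying beyond $\Gamma_{t_0}$. Minimize area among $G$-invariant hypersurfaces in the equivariant isotopy/homology class of $\Gamma_{t_0}$ in $\closure(W)$, as in Lemma \ref{Lem: minimizer representative} and Remark \ref{Rem: minimizing in mfd with boundary} (equivariant regularity via \cite{wang2026homology}*{Theorem 2.11}). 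The barriers are $\Gamma_{t_0}$ and $\bd N$, the latter contracting and hence also a barrier. By the maximum principle the minimizer $\Sigma'$ is a smooth minimal $G$-hypersurface in $\interior(W)$, possibly with several $G$-components, or has components coinciding with components of $\bd N$.
\end{itemize}
Then cut $\hat N$ along $\Sigma'$ and discard the part between $\Gamma$'s expanding side and $\Sigma'$. Let $N_1$ be the remaining piece that contains the contracting copy of $\Gamma$.

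\textbf{Step 3: verify (i)--(iii).} By minimality, $\Sigma'$ is stable from the side facing away from the discarded region. In the degenerate case, perturbation via Lemma \ref{Lem: local foliation} shows that either $\Sigma'$ has a contracting neighborhood in $N_1$, or it is foliated by minimal leaves; in the latter case we push to a nearby leaf or further minimizer. This gives (i). Property (ii) holds because $N_1$ is obtained from $N$ by removing regions along smooth minimal $G$-hypersurfaces. Property (iii) holds because the contracting copy of $\Gamma$ (or $\wti\Gamma$) survives in $\bd N_1$. If $\Gamma$ is non-$G$-separating, both copies lie in the same $G$-component of $\hat N$, and care is needed so that the discarded region does not swallow the contracting copy. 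Keeping the discarded region in the expanding collar side of the minimization handles this.

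\textbf{Step 4: the ``moreover'' clause.} Let $\Sigma$ be disjoint from $\Gamma$ with an expanding $G$-neighborhood. Choose $t_0$ so small that $\Gamma_{t_0}\cap\Sigma=\emptyset$. A minimal $G$-component of $\Sigma$ with expanding neighborhood cannot touch the area minimizer $\Sigma'$ tangentially from one side without coinciding with it, by the maximum principle. It also cannot coincide with it, since $\Sigma'$ is one-sided minimizing while $\Sigma$ has an expanding neighborhood, so area decreases under pushing. Therefore $\Sigma'$ and $\Sigma$ would intersect transversally if at all. To exclude this, minimize instead in the complement of $\Sigma$: add the components of $\Sigma$ met by the minimization region as additional barriers, using the expanding foliation leaves near $\Sigma$, which are mean convex toward $\Sigma$ from outside. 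The minimizer then avoids $\Sigma$, so each $G$-component of $\Sigma$ falls entirely in $\interior(N_1)$ or entirely in the discarded region.

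The main obstacle is this last barrier argument together with the degenerate-stable case of (i). The difficulty is ensuring that the equivariant minimizer exists, stays smooth, and has genuinely contracting neighborhoods while avoiding $\Sigma$. I would first try iterating the minimization finitely many times, using the finiteness of areas and the monotonicity of the barrier region.
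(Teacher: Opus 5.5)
Your overall route is the paper's. You cut along $\Gamma$ and keep the cut manifold in the contracting case. In the mixed non-$G$-separating case you minimize in the $G$-homology class of the expanding side inside a region avoiding $\Sigma$, cut along the minimizer, and keep the piece containing the contracting copy. The paper minimizes in $[\Gamma'']^G$ directly, in the $G$-component of the metric completion of $\hat N\setminus\Sigma$ that contains the expanding copy $\Gamma''$; your use of a leaf $\Gamma_{t_0}$ and barrier leaves is a cosmetic difference.

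\textbf{The gap: property (iii).} Write $\Gamma'$ for the contracting copy. Since $\Gamma$ is non-$G$-separating, $W$ is $G$-connected with $\bd W=\Gamma_{t_0}\sqcup\Gamma'\sqcup\bd N$. So $\Gamma'\sqcup\bd N$ is itself a competitor in the class of $\Gamma_{t_0}$. The maximum principle only says each $G$-component of the minimizer $\Sigma'$ is either interior or a whole boundary $G$-component. Nothing you wrote excludes $\Gamma'\subset\Sigma'$, in which case $\Gamma'$ is cut off with the discarded region and (iii) fails. ``Keeping the discarded region in the expanding collar side'' is not a mechanism, since you do not control where the minimizer lands.

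The missing idea is the strict area comparison $\Area(\Sigma')\le\Area(\Gamma_{t_0})<\Area(\Gamma)=\Area(\Gamma')$. It is strict because the leaves on the expanding side have mean curvature vector pointing in the direction of motion. Since $\Gamma'$ is $G$-connected, $\Sigma'$ cannot contain it. Hence the region $\Omega$ with $\bd\Omega=\Sigma'\sqcup\Gamma_{t_0}$ has closure disjoint from $\Gamma'$. This is exactly the paper's step $\Area(\Gamma')=\Area(\Gamma'')>\Area(\Gamma''')$.

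\textbf{Step 4.} The claim that $\Sigma'$ and $\Sigma$ ``would intersect transversally if at all'' does not follow. Distinct minimal hypersurfaces can be tangent at points where they cross, and there the one-sided maximum principle gives nothing. You do not need this claim, and your fallback is not really an obstacle: remove $\Sigma$ from the minimization domain. The copies of $\Sigma$ are then boundary components with expanding neighborhoods. Pushing them inward strictly lowers area, so they cannot be components of a minimizer; equivalently, your barrier leaves around $\Sigma$ are strictly mean convex toward the domain. A single minimization then gives $\Clos(\Omega)\cap(\Gamma'\cup\Sigma)=\emptyset$, in particular $\Sigma'\cap\Sigma=\emptyset$. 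This yields the required dichotomy for the $G$-components of $\Sigma$, with no iteration needed.

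\textbf{Step 3.} Your treatment of a degenerate minimizer (``push to a nearby leaf or further minimizer'') is vague. The paper does not treat this case separately; it simply asserts that the minimizer has a contracting $G$-neighborhood. That assertion implicitly uses the finiteness hypotheses in force where the lemma is applied. Under them, every minimal $G$-hypersurface has one of the three neighborhood types, and a minimizer cannot be expanding toward $N_1$.
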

\begin{proof}
	If $\Gamma$ is $(G,2)$-sided $G$-separating with a contracting or mixed $G$-neighborhood, then we can take the metric completion $(N_1,g_{_{N_1}})$ of a component of $N\setminus\Gamma$ so that $\bd N_1 = (\bd N\cap N_1) \sqcup \Gamma$ has a contracting $G$-neighborhood in $N_1$. The results in (i)-(iii) follow easily. 
	
	If $\Gamma$ is $(G,1)$-sided with a contracting $G$-neighborhood. We can simply take $N_1$ as the metric completion of $N_1\setminus\Gamma$. 
	Next, suppose $\Gamma$ is $(G,2)$-sided but not $G$-separating. 
    
	Let $(N',g_{_{N'}})$ be the metric completion of $N\setminus\Gamma$ so that $\bd N' = \bd N\sqcup \Gamma'\sqcup \Gamma''$, where $\Gamma'$ and $\Gamma''$ are both isometric to $\Gamma$. 
	If $\Gamma$ has a contracting $G$-neighborhood, then we can take $N_1:=N'$. 
	If $\Gamma$ has a mixed $G$-neighborhood, we may assume $\Gamma'$ and $\Gamma''$ have contracting and expanding $G$-neighborhoods in $N'$ respectively. 
    Suppose further that $\Sigma\subset \interior(N)$ is either an empty set or a minimal $G$-hypersurface disjoint from $\Gamma$ with an expanding $G$-neighborhood. 
    Thus, $\Sigma\subset \interior(N')$. 
    We are going to find a $G$-manifold satisfying (i)-(iii) and the last statement in the lemma. 

    Specifically, let $N''$ be the metric completion of $N'\setminus\Sigma$, which is a $G$-manifold with minimal boundary. 
    Since $N''$ could be $G$-disconnected, we consider the $G$-component $\wti N''$ of $N''$ that contains $\Gamma''$. 
    (Note that if $\Sigma=\emptyset$, then $\wti N''=N''=N'$.)
    Then by geometric measure theory and the maximum principle, we can minimize the area in the equivariant homology class $[\Gamma'']^G\in \mc H^G(\wti N'';\mb Z_2)$ (Remark \ref{Rem: minimizing in mfd with boundary}, \ref{Rem: locally G-boundary-type for G,2-sided}) to obtain a $(G,2)$-sided minimal $G$-hypersurface $\Gamma'''\subset \wti N''$ with a contracting $G$-neighborhood in $\wti N''$ so that 
    \begin{itemize}
        \item $\Gamma'''\sqcup\Gamma'' = \bd\Omega$ for some smooth open $G$-set $\Omega\subset \interior(\wti N'')$;
        \item every $G$-component of $\Gamma'''$ is contained either in $\bd \wti N''$ or in $\interior(\wti N'')$. 
    \end{itemize} 
    Note that every $G$-component of $\bd \wti N''$ comes from either $\Gamma'$, $\Gamma''$, a $G$-component of $\Sigma$, or a $G$-component of $\bd N$.  
	Therefore, since $\Area(\Gamma')=\Area(\Gamma'')>\Area(\Gamma''')$ and $\Sigma$ has an expanding $G$-neighborhood, we know that every $G$-component of $\Gamma'''\cap \bd \wti N''$ (if it exists) is isometric to a $G$-component of $\bd N$. 
    Additionally, one easily verifies that $\closure(\Omega)\cap \bd \wti N'' = \Gamma''\cup (\Gamma'''\cap \bd \wti N'')$. 
    Hence, after identifying $\Omega $ as a $G$-invariant domain in $\interior(N')$, we conclude that $\Gamma''\subset \bd \Omega$ and $(\Gamma'\cup\Sigma)\cap \Clos(\Omega)=\emptyset$. 
    Thus, by identifying $\Gamma''' $ as a closed minimal $G$-hypersurface in $N'$, we see the metric completion of $N'\setminus (\Gamma'''\cap\interior(N') )$ has $m\geq 2$ $G$-components $\{N_i'\}_{i=1}^m$ so that 
    \begin{itemize}
        \item $\Gamma''$ is contained in $N_m'\cong \closure(\Omega)$.
        \item $\Gamma'$ is contained in $N_1'$, and every $G$-component of $\Sigma$ is contained in some $N_i'$, $1\leq i\leq m-1$. 
    \end{itemize}
	The proof is finished by taking $N_1$ as the $G$-component $N_1'$ of $N'\setminus\Omega$ containing $\Gamma'$. 
\end{proof}

\begin{remark}\label{Rem: G-cutting procedure}
	In the above lemma, the results in (i) and (ii) are also valid without the $G$-connectedness assumption on $\Gamma\subset N$ if all the $G$-components of $\Gamma$ have the same type of $G$-neighborhood. 
	Specifically, if every $G$-component of $\Gamma$ has a mixed $G$-neighborhood and thus is $(G,2)$-sided, then by the above proof, we can consider the metric completion $N'$ of $N\setminus\Gamma$ (even if it may be not be $G$-connected), minimize the area in $[\Gamma'']^G\in H_n(N';\mb Z_2)$, and take $N_1$ to be the metric completion of any non-empty $G$-connected component of $N'\setminus \Omega$. 
	For the other cases, one can simply repeat using the above lemma on each $G$-component of $\Gamma$. 
\end{remark}

\subsection{Main theorems}

\begin{theorem}\label{Thm: infinite in trivial G-homology class}
	Let $(M^{n+1},g_{_M})$ be a closed Riemannian $G$-manifold satisfying \eqref{Eq: dimension assumption}, and $(N^{n+1},g_{_N})$ be a compact Riemannian $G$-manifold with (possibly empty) minimal boundary $\bd N$ so that $\bd N$ has a contracting $G$-neighborhood in $N$, and $\interior(N)$ is $G$-equivariantly isometric to a domain $M'\subset M$. 
	Suppose there are only finitely many $(G,1)$-sided minimal $G$-hypersurfaces in $M'\cong\interior(N)$. 
	Then there are infinitely many distinct minimal realizations $\Gamma$ of $[0]^G\in \mc H^G(M;\mb Z_2)$ supported in $\Clos(M')\subset M$. 
\end{theorem}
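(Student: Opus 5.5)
We argue by contradiction: assume only finitely many minimal realizations of $[0]^G$ in $M$ are supported in $\Clos(M')$. The aim is a core of $N$ satisfying a Frankel-type property, followed by the Marques--Neves/Song argument with the equivariant Weyl laws.

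\textbf{Step 1: maximal cutting.} Repeatedly apply Lemma \ref{Lem: G-cutting} (and Remark \ref{Rem: G-cutting procedure}) to minimal $G$-hypersurfaces in the interior of the current domain that have a contracting or mixed $G$-neighborhood. Each stage produces $N_{i+1}$ with $\bd N_{i+1}$ minimal and contracting, and $\interior(N_{i+1})$ isometric to a subdomain of $M'$.

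To see that the procedure terminates, use the finiteness hypotheses. Every $(G,1)$-sided cut comes from the finitely many $(G,1)$-sided minimal $G$-hypersurfaces in $M'$. Every $(G,2)$-sided $G$-component $\Gamma$ that we cut along is $(G,2)$-sided, so $\llbracket\Gamma\rrbracket\in\mc{LB}^G$ by Remark \ref{Rem: locally G-boundary-type for G,2-sided}. Since $\mc H^G(M;\mb Z_2)$ is finite by Lemma \ref{Lem: finite G-homology class}, we can group such components. Among any $|\mc H^G(M;\mb Z_2)|+1$ pairwise disjoint ones, some nonempty subcollection has union in $[0]^G$, by pigeonhole on partial sums. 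Consequently, infinitely many cuts would give infinitely many distinct minimal realizations of $[0]^G$ inside $\Clos(M')$, contradicting our assumption.

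We may therefore stop at a core $\mc C:=N_m$. Every minimal $G$-hypersurface in $\interior(\mc C)$ has an expanding $G$-neighborhood. A standard argument (minimizing area between two disjoint such hypersurfaces, as in \cite{song2023infinite}) then shows that any two closed minimal $G$-hypersurfaces in $\interior(\mc C)$ intersect.

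\textbf{Step 2: min-max in the core.} There are two cases.

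If $\bd\mc C=\emptyset$, then $\mc C=M$. Each width $\omega_p([0]^G)$ is realized, with integer multiplicities, by minimal $G$-hypersurfaces. By the Frankel property, each is realized by a single $G$-connected minimal $\Gamma_p$ with multiplicity $m_p$.
\begin{itemize}
\item If $\Gamma_p$ is $(G,2)$-sided and not $G$-separating, cutting along it contradicts the expanding-neighborhood property. So either $\Gamma_p$ is $G$-separating, hence a realization of $[0]^G$, or it is one of the finitely many $(G,1)$-sided ones.
\item In the $(G,1)$-sided case, the multiplicity is even by the analogue of Theorem \ref{Thm: confined min-max hypersurface}(i) with $\Gamma=\emptyset$.
\end{itemize}
Hence only finitely many areas appear, and the widths take values in a finitely generated additive semigroup. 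Combining the proof of \cite{marques2017existence}*{Theorem 6.1} (strict increase $\omega_p<\omega_{p+1}$ under finiteness) with the sublinear growth \eqref{Eq: Weyl law} gives a contradiction.

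If $\bd\mc C\neq\emptyset$, form $Cyl(\mc C)$ and take $\Gamma=\emptyset$, i.e.\ the class $[0]^G_c$. By Theorem \ref{Thm: width estimates in cylindrical mfd}, $\omega_{p+1}-\omega_p\ge \Area(\Sigma_1)$ and $\omega_p\le p\Area(\Sigma_1)+Cp^{1/(l+1)}$. By Theorem \ref{Thm: confined min-max hypersurface}(i) and the Frankel property, each $\omega_p=m_p\Area(\Gamma_p)$ with $\Gamma_p\subset\interior(\mc C)$. Such a $\Gamma_p$ is either a $G$-separating realization of $[0]^G$, hence one of finitely many, or $(G,1)$-sided with even $m_p$, again one of finitely many. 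Song's argument \cite{song2023infinite}*{Theorem 11} then applies: since the widths lie in a finite-generated discrete set and increase by at least $\Area(\Sigma_1)$ per step, some $\Gamma_p$ must satisfy $\Area(\Gamma_p)\le \Area(\Sigma_1)$ up to multiplicity, and the upper bound forces a contradiction with $\Gamma_p$ being unstable or expanding compared with $\Sigma_1$.

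\textbf{Main obstacle.} The hardest step is the termination and bookkeeping in Step 1. One must check that the boundaries produced in Lemma \ref{Lem: G-cutting}, including the area minimizers $\Gamma'''$, are genuine realizations of $[0]^G$ in $M$. One must also check that the $(G,2)$-sided non-separating cuts cannot accumulate without generating new null-homologous realizations. The pigeonhole grouping argument is what I would try first.
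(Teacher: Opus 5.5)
Your outline follows the paper's proof: maximal cutting to a core, the Frankel and separation properties there, finiteness of minimal $G$-hypersurfaces in the core, then Marques--Neves in the closed case and Song's cylindrical argument otherwise. The genuine difference is how you prove the cutting terminates.

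\textbf{Termination.} The paper tracks the set $\mc B_i\subset\wti{\mc H}^G(M;\mb Z_2)$ of classes of admissible cut hypersurfaces and eliminates them one class at a time. A $(G,1)$-sided cut kills its class immediately by Lemma \ref{Lem: intersect G1-sided}. A $(G,2)$-sided class can recur only finitely often, because any later $\Gamma$ in that class pairs with the earlier cut $\Gamma_i$ to give a realization $\Gamma\cup\Gamma_i$ of $[0]^G$.

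Your argument is different and valid. The chosen cut hypersurfaces are pairwise disjoint in $M$, since each later one lies in $\interior(N_j)$, which misses the earlier ones. Hence each of the finitely many $(G,1)$-sided ones is cut at most once. Among any $\#\mc H^G(M;\mb Z_2)$ disjoint $(G,2)$-sided ones, some nonempty sub-union lies in $[0]^G$ by pigeonhole on partial sums, and disjoint blocks give distinct realizations.

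This is shorter and bounds the number of cuts explicitly. It needs neither the intersection lemma nor $\wti{\mc H}^G$, and your worry about the pieces $\Gamma'''$ does not arise. The paper's class-by-class bookkeeping, on the other hand, is the template reused in Step 4 of Theorem \ref{Thm: infinite in G-homology class}, where a fixed $\Gamma_1$ must also stay disjoint from all cuts.

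\textbf{Two places to tighten.}

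In the closed case, knowing only that the widths lie in a finitely generated additive semigroup is not enough: its counting function grows like $T^k$ and does not contradict $p^{\frac{1}{l+1}}$. The Marques--Neves count needs $\omega_p\in\bigcup_j A_j\mb N$ for finitely many $A_j$, which is what the single-component (Frankel) property gives. Together with strict monotonicity this yields linear growth.

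In the cylindrical case, the contradiction with \cite{song2023infinite}*{Lemma 14} comes from the adapted area comparison of \cite{song2023infinite}*{Lemma 13}, which you should state explicitly:
\begin{itemize}
\item every $(G,2)$-sided minimal $G$-hypersurface in $\interior(N_m)$ has area $>\Area(\Sigma_1)$;
\item every $(G,1)$-sided one has $2\Area>\Area(\Sigma_1)$.
\end{itemize}
The even multiplicity from Theorem \ref{Thm: confined min-max hypersurface}(i) is exactly what matches the factor $2$. This lemma should replace your phrase about $\Gamma_p$ ``being unstable or expanding compared with $\Sigma_1$''.
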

\begin{proof}
	Suppose for contradiction that there are only finitely many minimal realizations of $[0]^G\in \mc H^G(M;\mb Z_2)$ supported in $\Clos(M')$. 
	Then for any $G$-connected minimal $G$-hypersurface $\Gamma_1\subset \interior(N)$ with a non-expanding $G$-neighborhood, we can apply the cutting procedure in Lemma \ref{Lem: G-cutting} to have a new $G$-manifold $N_1$, which satisfies the same properties as $N$. 
    We next proceed with an inductive argument. 
    Suppose we have constructed $N_i$ for some $i\geq 1$ so that for each $1\leq j\leq i$, $\bd N_j$ is minimal with a contracting $G$-neighborhood in $N_j$, and $\interior(N_j)$ is isometric to a $G$-domain $M_j'\subset M_{j-1}'\subset M'$ in $M$, where $N_0:= N$ and $M_0':=M'$.  
    Denote by
	\begin{itemize}
		\item[(a)] $\mc N_i$ the set of all $G$-connected minimal $G$-hypersurfaces $\Gamma \subset \interior(N_i)\cong M_i'$ with a contracting or mixed $G$-neighborhood; 
		\item[(b)] $\mc B_i:=\{\beta=[\Gamma]^G\in \wti{\mc H}^G(M;\mb Z_2):  \Gamma \in \mc N_i \}$ (Definition \ref{Def: equivariant homology}). 
	\end{itemize}
    If $\mc N_i\neq \emptyset$ (or equivalently $\mc B_i\neq \emptyset$), 
    we can repeat the cutting procedure (Lemma \ref{Lem: G-cutting}) for any $\Gamma_i\in \mc N_i$ to have a new $G$-manifold $N_{i+1}$ so that $\bd N_{i+1}$ is minimal with a contracting $G$-neighborhood in $N_{i+1}$. 
	Moreover, $\interior(N_{i+1})$ is isometric to a $G$-domain $M_{i+1}'\subset M_i'$ in $ M$.  
	
    We claim that the above equivariant cutting procedure will stop in finite steps (say $m\in\mb N$ steps) so that $\mc N_m=\emptyset$ and $\mc B_m=\emptyset$. 
    Specifically, for any $j>i$, we know $\mc B_j\subset \mc B_i$ since $\interior( N_{j})$ is $G$-equivariantly isometric to a domain in $\interior(N_i)$. 
	Note that for any $(G,1)$-sided $G$-hypersurface $\Sigma\subset M$, any other realization $\Sigma'$ of $[\Sigma]^G\in \wti{\mc H}^G(M;\mb Z_2)$ must have a non-empty intersection with $\Sigma$ (Lemma \ref{Lem: intersect G1-sided}). 
	Hence, if $\Gamma_i\in \mc N_i$ is $(G,1)$-sided, then $[\Gamma_i]^G\notin \mc B_{j}$ for any $j>i$, which implies $\#\mc B_j < \#\mc B_i$ for $j>i$. 
	If $\Gamma_i\in \mc N_i$ is $(G,2)$-sided and $[\Gamma_i]^G\neq [0]^G\in \wti{\mc H}^G(M;\mb Z_2)$. 
    Then, since every minimal $G$-hypersurface $\Gamma\subset \interior (N_j)\cong M_j'$ for $j>i$ is disjoint from $\Gamma_i\subset M_i'\setminus M_j'$, we know that $\Gamma\cup\Gamma_i\subset M'$ is a minimal realization of $[0]^G$ in $M$ whenever $[\Gamma]^G=[\Gamma_i]^G\in \wti{\mc H}^G(M;\mb Z_2)$.
	Hence, by the contradiction assumption, there are only finitely many elements in $\mc N_j$ ($j>i$) that are $G$-homologous to $\Gamma_i$ in $M$. 
	Thus, after repeating for finite times, we have $[\Gamma_i]^G\notin \mc B_j$ and $\#\mc B_j < \#\mc B_i$ for some $j>i$. 
	If $\Gamma_i$ is $(G,2)$-sided and $[\Gamma_i]^G= [0]^G\in \wti{\mc H}^G(M;\mb Z_2)$, then our contradiction assumption indicates that $[0]^G$ can be eliminates from $\mc B_i$ in finite steps. 
	
	In summary, for any $\Gamma_i\in\mc N_i$, $[\Gamma_i]^G\in \wti {\mc H}^G(M;\mb Z_2)$ can be eliminated from $\mc B_i$ in finite steps. 
    By Lemma \ref{Lem: finite G-homology class}, Remark \ref{Rem: locally G-boundary-type for G,2-sided}, and the finiteness assumption on $(G,1)$-sided minimal $G$-hypersurfaces, we know $\mc B_1$ is finite. 
	Combined with $\mc B_j\subset \mc B_i$ for all $j>i$, we conclude that there exists $m\in\mb N$ so that $\mc N_m=\emptyset$. 
	Thus, every minimal $G$-hypersurface $\Gamma\subset \interior(N_m)\cong M_m'\subset M$ has an expanding $G$-neighborhood.  
	\begin{claim}\label{Claim: properties of core for [0]^G} 
		The `core' $G$-manifold $N_m$ also satisfies the following properties:
		\begin{itemize}
			\item[(1)] any two minimal $G$-hypersurfaces in $\interior(N_m)$ have a non-empty intersection; namely, $\interior(N_m)\cong M_m'$ satisfies the Frankel property for minimal $G$-hypersurfaces;
			\item[(2)] every $(G,2)$-sided minimal $G$-hypersurface in $\interior(N_m)$ is  $G$-separating in $N_m$; 
			\item[(3)] there are only finitely many minimal $G$-hypersurfaces in $\interior(N_m)$. 
		\end{itemize}
	\end{claim}
	\begin{proof}[Proof of Claim \ref{Claim: properties of core for [0]^G}]
        
        For the statement in (1), suppose $\Gamma,\Gamma'\subset \interior(N_m)\cong M_m'$ are two disjoint minimal $G$-hypersurfaces, which have expanding $G$-neighborhoods by the construction of $N_m$. 
        Then there exists a $G$-component $N_m'$ of $N_m\setminus (\Gamma\cup\Gamma')$ so that $N_m'$ has at least two new boundary $G$-components $S,S'$ coming from $\Gamma,\Gamma'$ respectively. 
        By minimizing the area in $[ S]^G \in \mc H^G(N_m';\mb Z_2)$ (Remark \ref{Rem: locally G-boundary-type for G,2-sided}), one can obtain a $(G,2)$-sided minimal $G$-hypersurface $S''$ in $\interior(N_m')$ with a contracting $G$-neighborhood by the maximum principle (cf. the proof of Lemma \ref{Lem: minimizer representative}, Remark \ref{Rem: minimizing in mfd with boundary}). 
        Since $\interior(N_m)\cong M_m'$, we get a contradiction to $\mc N_m=\emptyset$. 
        
		Combining $\mc N_m=\emptyset$ with the proof of Lemma \ref{Lem: G-cutting}, one easily obtains the statement in (2). 
        
		Since it has been assumed that $(G,1)$-sided minimal $G$-hypersurfaces in $M$ are finite, we only need to show that the set of $(G,2)$-sided minimal $G$-hypersurfaces in $\interior(N_m)$ is finite for (3). 
		If $\bd N_m=\emptyset$, i.e. $M=N=N_m$, then the finiteness result follows from (2) and our contradiction assumption. 
		If $\bd N_m\neq\emptyset$, then for any $(G,2)$-sided minimal $G$-hypersurface $\Sigma\subset \interior(N_m)$, there exists an open set $\Omega\subset \interior(N_m)\cong M_m'$ so that $\Sigma=\bd_{rel}\Omega:=\bd\Omega\cap \interior(N_m)$ by (2). 
		By identifying $\Omega$ as an open $G$-set in $M_m'\subset M$, we have a $G$-cobordant $\Clos(\Omega)$ between $\Sigma\subset M_m'= \interior(M_m')$ and a minimal $G$-hypersurface $\Gamma\subset \bd \Clos(M_m')$ that is isometric to part of $\bd N_m$. 
		Hence, any such $\Sigma$ is contained in a minimal realization $\Sigma\sqcup\Gamma$ of $[0]^G\in \mathcal{H}^G(M; \mathbb{Z}_2)$ supported in $\Clos(M')$, which implies (3) by the contradiction assumption. 
	\end{proof}
	
	If $\bd N_m=\emptyset$, i.e. $M=N=N_m$. 
	Then by the non-linear growth rate of the equivariant volume spectrum $\omega_p([0]^G )\sim p^{\frac{1}{l+1}}$ and the Frankel property (Claim \ref{Claim: properties of core for [0]^G}(1)), the proof of \cite{marques2017existence}*{Theorem 6.1 and 1.1} can be taken almost verbatim to show the existence of infinitely many $G$-connected minimal $G$-hypersurfaces in $\interior(N_m)=M$. 
	This contradicts Claim \ref{Claim: properties of core for [0]^G}(3). 
	
	If $\bd N_m\neq\emptyset$, let $\{\widehat\Sigma_i\}_{i=1}^q$ be the $G$-connected components of $\bd N_m$. 
    One can easily adapt the proof of \cite{song2023infinite}*{Lemma 13} (using Lemma \ref{Lem: minimizer representative} and Remark \ref{Rem: minimizing in mfd with boundary}) to show that any $G$-connected minimal $G$-hypersurface $\Sigma\subset \interior(N_m)\cong M_m'$ satisfies that  
	\begin{itemize}
		\item $\Area(\Sigma) > \sup_{i\in\{1,\dots,q\}} \Area(\widehat\Sigma_i)$ if $\Sigma$ is $(G,2)$-sided;
		\item $2\Area(\Sigma) > \sup_{i\in\{1,\dots,q\}} \Area(\widehat\Sigma_i)$ if $\Sigma$ is $(G,1)$-sided. 
	\end{itemize}
	Consider the non-compact $G$-manifold $Cyl(N_m)$ with cylindrical ends associated to $N_m$. 
	It follows from Claim \ref{Claim: properties of core for [0]^G}(1) and Theorem \ref{Thm: confined min-max hypersurface}(i) w.r.t. $[0]_c^G$ in $Cyl(N_m)$ that $\omega_p([0]_c^G)$ is realized by the area of a $G$-connected minimal $G$-hypersurface $\wti\Sigma_p\subset M_m'\cong \interior(N_m)$ with integer multiplicity 
	\[ \omega_p([0]_c^G)=m_p \Area(\wti\Sigma_p), \]
	where $p,m_p\in\mb N$, and $m_p$ is even whenever $\wti \Sigma_p$ is $(G,1)$-sided. 
	Together with Claim \ref{Claim: properties of core for [0]^G}(3) and Theorem \ref{Thm: width estimates in cylindrical mfd}, we have a contradiction to the arithmetic result \cite{song2023infinite}*{Lemma 14}.
\end{proof}

We are now ready to show our main theorem. 

\begin{theorem}\label{Thm: infinite in G-homology class}
	Let $(M^{n+1},g_{_M})$ be a closed Riemannian manifold with a compact Lie group $G$ acting by isometries so that \eqref{Eq: dimension assumption} is satisfied. 
	Then 
	either
	\begin{itemize}
		\item[(i)] there are infinitely many embedded, $G$-connected, $(G,1)$-sided minimal hypersurfaces; or
		\item[(ii)] for any $\alpha\in \mc H^G(M;\mb Z_2)$, there are infinitely many minimal realizations of $\alpha$. 
	\end{itemize}
    In particular, there always exist infinitely many $G$-invariant minimal hypersurfaces in $M$.
\end{theorem}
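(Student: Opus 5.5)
We argue by contradiction: assume (i) fails, so there are only finitely many $G$-connected $(G,1)$-sided minimal $G$-hypersurfaces in $M$, and fix $\alpha\in\mc H^G(M;\mb Z_2)$ with only finitely many minimal realizations. If $\alpha=[0]^G$, Theorem \ref{Thm: infinite in trivial G-homology class} applied with $N=M$ gives a contradiction at once. The final assertion then follows because either (i) or (ii) with $\alpha=[0]^G$ yields infinitely many minimal $G$-hypersurfaces. So assume $\alpha\neq[0]^G$. The key observation we will use repeatedly is a \emph{pairing principle}. Suppose $N\subset M$ is a cut-out $G$-domain whose boundary components come from minimal pieces $\Sigma'$, with $\alpha-[\Sigma']^G$ recorded. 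Then a minimal realization $\Gamma$ of the remaining class inside $\interior(N)$, together with a suitable union of boundary pieces, gives a minimal realization of $\alpha$ in $M$. Since $\Gamma$ is disjoint from $\bd N$, distinct $\Gamma$ give distinct realizations. Hence each stage inherits finiteness of minimal realizations of the recorded class.

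\emph{Stage 0.} By Lemma \ref{Lem: minimizer representative}, take an essential area-minimizing minimal realization $\Sigma$ of $\alpha$. Let $\Sigma_0$ be the union of its $G$-components with non-expanding (contracting or mixed) neighborhoods. Cut along $\Sigma_0$ via Lemma \ref{Lem: G-cutting} and Remark \ref{Rem: G-cutting procedure} to get $N_0$ with contracting boundary, and record $\alpha_0=\alpha-[\Sigma_0]^G$ and the induced relative class $\wti\alpha_0$ in $N_0$. If $\alpha_0=[0]^G$, apply Theorem \ref{Thm: infinite in trivial G-homology class} to $N_0$: its infinitely many realizations of $[0]^G$ in $\Clos(M_0')$, paired with $\Sigma_0$, give infinitely many realizations of $\alpha$, a contradiction.

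\emph{Stage 1.} Otherwise, iterate the cutting over non-expanding minimal realizations $\Gamma$ of $\wti\alpha_k$ in the interior. At each cut, use the last part of Lemma \ref{Lem: G-cutting} so that every $G$-component of the realization either stays inside with an expanding neighborhood or is cut away, and update $\alpha_{k+1}$ accordingly. Termination follows as in the proof of Theorem \ref{Thm: infinite in trivial G-homology class}. The sets of relevant classes in the finite group $\wti{\mc H}^G$ (Lemma \ref{Lem: finite G-homology class}) are monotone. A $(G,1)$-sided cut removes its class by Lemma \ref{Lem: intersect G1-sided}. A $(G,2)$-sided class can only recur finitely often by the pairing principle. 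Hence the process ends either in $\alpha_k=[0]^G$, which returns us to Stage 0's contradiction, or in a domain $N_1$ where all minimal realizations of $\wti\alpha_1$ are expanding.

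\emph{Stage 2.} Next cut along every non-expanding minimal $G$-hypersurface disjoint from some expanding realization of $\wti\alpha_1$; the same counting argument terminates this, producing a core $N_2$. We then establish the analogue of Claim \ref{Claim: properties of core for [0]^G}: a Frankel property between realizations of $\wti\alpha_2$ and minimal hypersurfaces in $\interior(N_2)$, and finiteness of minimal $G$-hypersurfaces there, using pairing together with the finiteness of $(G,1)$-sided ones. If $\bd N_2=\emptyset$, then $\wti\alpha_2=\alpha$, and the widths $\omega_p(\alpha)$ are realized by connected realizations with multiplicity. Combining the growth rate \eqref{Eq: Weyl law} with the Marques--Neves argument gives infinitely many realizations. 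If $\bd N_2\neq\emptyset$, a $2$-sided $G$-separating realization in $N_2$ would pair with boundary components into infinitely many realizations of $\alpha$. Hence all realizations are non-$G$-separating, and in particular $\wti\alpha_2$ is nontrivial relative to $N_2$. We then apply Theorem \ref{Thm: confined min-max hypersurface}(ii) in $Cyl(N_2)$, the estimates of Theorem \ref{Thm: width estimates in cylindrical mfd}, the area comparison adapted from Song's Lemma 13, and his arithmetic Lemma 14 to reach a contradiction.

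The main obstacle is the bookkeeping in Stage 1/2. Specifically, we must check that the relative class $\wti\alpha_k$ in the cut domain, plus boundary pieces, really reproduces $\alpha$ in $M$, so that pairing is injective. We must also check that the multi-stage cuts genuinely decrease a finite invariant. I would handle this by tracking, at every cut, an explicit $G$-cobordism $\Clos(\Omega)$ in $M$ as in the proof of Claim \ref{Claim: properties of core for [0]^G}(3).
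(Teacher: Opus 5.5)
Your outline follows the paper's algorithm: your Stages 0, 1, 2 are its Steps 1, 2, 4, and your two endgames are Step 5. However, one step you list cannot be carried out: finiteness of \emph{all} minimal $G$-hypersurfaces in $\interior(N_2)$.

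Pairing gives finiteness only for hypersurfaces that can be completed disjointly to a realization of $\alpha$. That is how you get finiteness of the set $\mc E_2$ of minimal realizations of $\wti\alpha_2$ in $\interior(N_2)$. Now take a $(G,2)$-sided minimal $G$-hypersurface $S\subset\interior(N_2)$ outside $\mc E_2$. By the very Frankel property you establish, $S$ meets every element of $\mc E_2$, in particular $\Gamma_2^e$, so it cannot be paired with any realization of $\alpha$ you have. If $S$ is $G$-separating, it only pairs with boundary pieces into a realization of $[0]^G$, and those are infinite by Theorem~\ref{Thm: infinite in trivial G-homology class}. (Had you this finiteness, the argument of Theorem~\ref{Thm: infinite in trivial G-homology class} run inside $N_2$ would already end the proof, and Theorem~\ref{Thm: confined min-max hypersurface}(ii) would be superfluous.)

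What the paper uses instead is finiteness of $\mc E_2$ together with Claim~\ref{Claim: G-Frankel}: every element of $\mc E_2$ is $G$-connected and meets every other minimal $G$-hypersurface in $\interior(N_2)$. The subfamily $\cup_{i\in\mc I}\Gamma_i$ in Theorem~\ref{Thm: confined min-max hypersurface}(ii) lies in $\mc E_2$. These facts therefore force $l_p=1$ and $\omega_p=m_p\Area(\wti\Sigma_p)$ with $\wti\Sigma_p\in\mc E_2$ and $m_p$ odd, which is exactly the finiteness Song's Lemma~14 needs. Note also that ruling out a $G$-separating $\Sigma\in\mc E_2$ is not pairing alone. $\Sigma$ and boundary pieces give a single realization of $\alpha$ supported in $M\setminus M_2$; the infinitude comes from Theorem~\ref{Thm: infinite in trivial G-homology class} applied to $N_2$.

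The bookkeeping you flag is also where your termination sketch needs correcting.

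\emph{Stage 1.} The class count from Theorem~\ref{Thm: infinite in trivial G-homology class} does not transfer. With $\alpha\neq[0]^G$, two disjoint $G$-homologous hypersurfaces only form a realization of $[0]^G$, which gives no bound. The paper instead keeps a list $\mc W$ of realizations of $\alpha$ all of whose $G$-components in the current domain are expanding. The realization of $\alpha$ paired from a non-expanding realization of the recorded class has a non-expanding component there, so it is new. Hence the number of Stage-1 cuts is bounded by the number of realizations of $\alpha$.

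\emph{Stage 2.} The class count does terminate this stage, but only after uniting $\Gamma\cup\widehat\Gamma\in[0]^G$ with the disjoint current realization $\Gamma^e\cup\Sigma$ of $\alpha$, producing a new realization of $\alpha$.

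\emph{Preserving the Stage-1 output.} Each Stage-2 cut must preserve what Stage 1 produced. The paper checks that every realization of the new recorded class in the smaller domain, after adding back discarded pieces, is an element of $\mc E_1$. The alternative, containing a non-expanding cut piece, is excluded because elements of $\mc E_1$ are expanding. So all realizations stay expanding and their number does not grow. Without this, Stage 2 could reintroduce non-expanding realizations and send you back to Stage 1 with no termination.
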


\begin{proof}
	Suppose for a contradiction that there are only finitely many $(G,1)$-sided minimal $G$-hypersurfaces and finitely many minimal realizations of certain $\alpha\in \mc H^G(M;\mb Z_2)$. 
	By Theorem \ref{Thm: infinite in trivial G-homology class}, $\alpha\neq [0]^G$. 
	
	\medskip
	\noindent {\bf Step 1.} {\it Cut along the area minimizer of $\alpha$.}
	
	Let $\Gamma_{0}$ be the area minimizing essential realization of $\alpha$ with $G$-connected components $\{\Gamma_{0}^{(i)}\}_{i\in\mc I}$ for some finite set $\mc I \subset \mb N$ (Lemma \ref{Lem: minimizer representative}). 
	Since $\Gamma_0$ is an essential realization of $\alpha\neq 0$, each $\Gamma_0^{(i)}$ is not $G$-separating and cannot be $(G,2)$-sided with a mixed or expanding $G$-neighborhood. 
	Hence, we can divide $\mc I = \mc I_0 \sqcup \mc I_1 \sqcup \mc I_2$ so that 
	\begin{align*}
		\Gamma_0^{(i)} \mbox{ is } 
		\left\{
		\begin{array}{ll}
			\mbox{$(G,1)$-sided with an expanding $G$-neighborhood} & \mbox{for $i\in \mc I_0$, }\\
			\mbox{$(G,1)$-sided with a contracting $G$-neighborhood} & \mbox{for $i\in \mc I_1$, }\\
			\mbox{$(G,2)$-sided non-$G$-separating with a contracting $G$-neighborhood} & \mbox{for $i\in \mc I_2$. }
		\end{array}
		\right.
	\end{align*}
    
	Then by Lemma \ref{Lem: G-cutting}, we can successively cut $M$ along each $\Gamma_0^{(i)}$ for $i\in \mc I_1\cup\mc I_2$ to obtain a new $G$-manifold $N_0$ so that $\bd N_0$ is minimal with a contracting $G$-neighborhood. 
	By identifying $\interior(N_0)$ with a $G$-domain $M_0$ in $M$, we denote by
	\begin{itemize}
		\item $\Gamma_0^e:= \cup_{i\in\mc I_0}\Gamma_0^{(i)} \subset \interior(N_0)$ a minimal $G$-hypersurface with an expanding $G$-neighborhood; 
		\item $\alpha_0:= [\cup_{i\in\mc I_0}\Gamma_0^{(i)}]^G \in \mc H^G(M;\mb Z_2)$, and 
        $\wti\alpha_0:=[\cup_{i\in\mc I_0}\Gamma_0^{(i)}]_{rel} + \mc B^G_{rel}(N_0,\bd N_0;\mb Z_2)$ the relative $G$-homology class of $\Gamma^e_0$ in $\mc {LB}^G_{rel}(N_0,\bd N_0;\mb Z_2)$;
		\item $\Sigma_0:= \cup_{i\in \mc I_1\cup\mc I_2} \Gamma_0^{(i)}$ an embedded minimal $G$-hypersurface in $M$ with $\alpha = \alpha_0 + [\Sigma_0]^G$ and $\Sigma_0\cap M_0=\emptyset$;
		\item $\mc W_0:=\{\Gamma_0\}$ so that every $\Gamma\in\mc W_0$ is a minimal realization of $\alpha$ and every $G$-component of $\Gamma$ contained in $M_0\cong\interior(N_0)$ has an expanding $G$-neighborhood. 
	\end{itemize}
    Let $\mc E_0$ be the collection of closed minimal realizations of $\wti\alpha_0$ that are compactly supported in $\interior(N_0)$. 
    Then we make the following claim:
    \begin{claim}\label{Claim: finite E}
        For any $\Gamma\in \mc E_0$, there exists an embedded minimal $G$-hypersurface $\Gamma'\subset \bd M_0$ so that $\Gamma'$ is isometric to the union of some $G$-components of $\bd N_0$, and $\Gamma\sqcup (\Sigma_0\triangle\Gamma')$ is a minimal realization of $\alpha$, where $\Sigma_0\triangle\Gamma':=(\Sigma_0\setminus \Gamma') \cup (\Gamma'\setminus\Sigma_0)$. 
        In particular, $\mc E_0$ is finite by assumptions. 
    \end{claim}
    \begin{proof}[Proof of Claim \ref{Claim: finite E}]
        Let $\Gamma\subset \interior(N_0)\cong M_0$ be a closed minimal realization of $\wti\alpha_0$. 
        Namely, there exists an open $G$-set $\Omega\subset \interior (N_0)$ so that $\llbracket \Gamma^e_0 \rrbracket + \llbracket\Gamma\rrbracket = (\bd \Omega) \llcorner \interior(N_0) $ in $N_0$.
        Since $\Gamma,\Gamma^e_0\subset \interior(N_0)$ are closed $G$-hypersurfaces,  $\bd\Omega\llcorner \bd N_0$ is induced by the union of some $G$-components of $\bd N_0$. 
        After identifying $\Omega$ as an open $G$-set in $M_0$ and taking the closure $\closure(\Omega)$ in $M$, we have $\bd \closure(\Omega)= \llbracket \Gamma^e_0 \rrbracket + \llbracket \Gamma\sqcup \Gamma' \rrbracket$ in $M$, where $\Gamma'= \bd M_0\cap \closure(\Omega)$ is a (possibly empty) minimal $G$-hypersurface isometric to the union of some $G$-components of $\bd N_0$. 
        Hence, $\Gamma\sqcup \Gamma'\subset M$ is also a minimal realization of $\alpha_0$. Note that $\Gamma\subset \interior (M_0)$, $\Sigma_0\subset M\setminus M_0$, and $\Gamma'\subset \bd M_0$ is the union of some minimal $G$-hypersurfaces we have cut along.
        Thus, $\Sigma_0\triangle \Gamma'$ is an embedded minimal $G$-hypersurface that is disjoint to $\Gamma$. 
        In particular, $\Sigma_0\triangle (\Gamma \sqcup \Gamma')= \Gamma \sqcup (\Sigma_0\triangle \Gamma')$ gives a minimal realization of $[\Sigma_0]^G+\alpha_0=\alpha$ in $M$ which coincides with $\Gamma$ when restricted in $M_0$. 
        This implies that $\# \mc E_0$ is bounded from above by the number of minimal realizations of $\alpha$. 
    \end{proof}

    Note that every $\Gamma\in \mc E_0$ is a local $G$-boundary. 
	Let $\mc E_0^e$ be the set of $\Gamma\in \mc E_0$ with an expanding $G$-neighborhood.  
	In particular, $\Gamma_0^e\in \mc E_0^e$ if they are non-empty. 
	
	To proceed, if $\alpha_0=[0]^G$, then by Theorem \ref{Thm: infinite in trivial G-homology class}, there are infinitely many minimal realizations $\Gamma$ of $[0]^G\in \mc H^G(M;\mb Z_2)$ supported in $\Clos(M_0)$. 
	Hence, $\alpha=[\Sigma_0]^G+[0]^G=[\Sigma_0]^G+[\Gamma]^G=[\Sigma_0\triangle \Gamma]^G$. 
	This yields infinitely many minimal realizations $\Sigma_0\triangle \Gamma$ of $\alpha$, which is a  contradiction. 
	
	Suppose next that $\alpha_0\neq [0]^G$, so $\Gamma_0^e\neq \emptyset$ and $\emptyset\neq \mc E_0^e\subset \mc E_0$. 
	If $\mc E_0=\mc E_0^e$ then we change all the subscripts from $_0$ to $_1$ and move on to {\bf Step 3}. Otherwise, continue with {\bf Step 2}. 
	
	\medskip
	\noindent{\bf Step 2.} {\it Cut along non-expanding realizations.}
	
	Given $\Gamma_{0,1}\in \mc E_0\setminus \mc E_0^e$ with $G$-components $\cup_{i\in\mc I}\Gamma_{0,1}^{(i)}$, let $\Gamma_{0,1}'\subset \bd M_0$ be the minimal $G$-hypersurface associated to $\Gamma_{0,1}$ from Claim \ref{Claim: finite E}.
    Since $\Gamma_{0,1}$ does not have an expanding $G$-neighborhood, we have 
	\[\Gamma_{0,1} \sqcup (\Sigma_{0} \triangle \Gamma_{0,1}' )\in \alpha \quad {\rm and}\quad  \Gamma_{0,1} \sqcup (\Sigma_{0} \triangle \Gamma_{0,1}' ) \notin \mc W_0. \]
	Write $\mc I=\mc I_0\sqcup \mc I_1\sqcup \mc I_2 \sqcup \mc I_3$ so that 
	\begin{align*}
		\Gamma_{0,1}^{(i)} 
		\left\{
		\begin{array}{ll}
			\mbox{has an expanding $G$-neighborhood} & \mbox{for $i\in \mc I_0$, }\\
			\mbox{is non-$G$-separating in $N_0$ with a contracting $G$-neighborhood} & \mbox{for $i\in \mc I_1$,}\\
			\mbox{is $G$-separating in $N_0$ with a contracting or mixed $G$-neighborhood} & \mbox{for $i\in \mc I_2$,}\\
			\mbox{is $(G,2)$-sided non-$G$-separating in $N_0$ with a mixed $G$-neighborhood} & \mbox{for $i\in \mc I_3$.}\\
		\end{array}
		\right.
	\end{align*}
	The cutting procedure (Lemma \ref{Lem: G-cutting}) along $\Gamma_{0,1}^{(i)}$ for $i \in \mc I_1$ will only add boundary components, and will chop off $G$-domains bounded by  $\Gamma_{0,1}^{(i)}$ for $i \in \mc I_2$. 
	Hence, by Lemma \ref{Lem: G-cutting}, we can successively cut $N_0$ along each $\Gamma_{0,1}^{(i)}$ for $i\in \mc I_1\cup\mc I_2$ to obtain a new $G$-manifold $N_{0}'$ so that $\bd N_0'$ is minimal with a contracting $G$-neighborhood. 
	After identifying $\interior(N_0')$ as a domain $M_0'$ in $M_0\subset M$, take $\mc J'\subset \mc I_0\cup\mc I_3$ as the set of indices $j$ so that $\Gamma_{0,1}^{(j)} \subset M_0'\cong\interior (N_0')$.  
	
	Next, by applying Remark \ref{Rem: G-cutting procedure} and the last statement in Lemma \ref{Lem: G-cutting} to $ \cup_{j\in\mc J'\cap \mc I_3} \Gamma_{0,1}^{(j)}$ and $\cup_{j\in\mc J'\cap \mc I_0} \Gamma_{0,1}^{(j)}$ in $N_0'$ (in place of $\Gamma,\Sigma$ respectively), we obtain a new $G$-manifold $N_{0,1}$ so that $\bd N_{0,1}$ is minimal with a contracting $G$-neighborhood, $\interior(N_{0,1})$ can be identified with a $G$-domain $M_{0,1}$ in $M_0'\subset M$, and every $\Gamma_{0,1}^{(j)}$, $j\in \mc J'\cap \mc I_0$, is either contained in $M_{0,1}$ or contained in $ M_0'\setminus \closure(M_{0,1}) $.  
	Let $\mc J\subset \mc I_0$ be the set of indices $j$ with $\Gamma_{0,1}^{(j)} \subset M_{0,1}\cong\interior (N_{0,1})$.  Define 
	\begin{itemize}
		\item $\Gamma_{0,1}^e:= \cup_{j\in \mc J} \Gamma_{0,1}^{(j)}\subset \interior(N_{0,1})$ as a minimal $G$-hypersurface with an expanding $G$-neighborhood; 
		\item $\alpha_{0,1}:=[ \cup_{j\in \mc J} \Gamma_{0,1}^{(j)}]^G\in \mc H^G(M;\mb Z_2)$, and 
        $\wti\alpha_{0,1}:=[ \cup_{j\in \mc J} \Gamma_{0,1}^{(j)}]_{rel} + \mc B^G_{rel}(N_{0,1},\bd N_{0,1};\mb Z_2)$ as the relative $G$-homology class of $\Gamma_{0,1}^e$ in $\mc {LB}^G_{rel}(N_{0,1}, \bd N_{0,1};\mb Z_2)$;
		\item $\Sigma_{0,1}:= \Sigma_0 \triangle \Gamma_{0,1}' \cup (\cup_{i\in \mc I \setminus\mc J } \Gamma_{0,1}^{(i)} )$ as an embedded minimal $G$-hypersurface in $M$ with $\alpha = \alpha_{0,1} + [\Sigma_{0,1}]^G$ and $\Sigma_{0,1}\cap M_{0,1}=\emptyset$;
		\item $\mc W_{0,1}:= \mc W_0 \cup\{\Gamma_{0,1}\cup (\Sigma_0\triangle \Gamma_{0,1}')= \Gamma_{0,1}^e\cup \Sigma_{0,1}\}$ so that every $\Gamma\in\mc W_{0,1}$ is a minimal realization of $\alpha$ and every $G$-component of $\Gamma$ contained in $M_{0,1}\cong\interior(N_{0,1})$ has an expanding $G$-neighborhood. 
	\end{itemize}
    By the same arguments in Claim \ref{Claim: finite E}, we can define
	\begin{itemize}
		\item $\mc E_{0,1}$ as the {\em finite} set of closed minimal realizations of $\wti\alpha_{0,1}$ supported in $\interior(N_0)$;
		\item $\mc E_{0,1}^e$ as the set of $\Gamma\in\mc E_{0,1}$ with an expanding $G$-neighborhood;
        \item $\Gamma'\subset \bd M_{0,1}$ as the minimal $G$-hypersurface associated to $\Gamma\in \mc E_{0,1}$ so that $\Gamma'$ is isometric to the union of some $G$-components of $\bd N_{0,1}$, and $\Gamma \sqcup (\Sigma_{0,1}\triangle \Gamma')$ is a minimal realization of $\alpha$. 
	\end{itemize}
	In particular, $\Gamma_{0,1}^e\in \mc E_{0,1}^e$ if they are not empty.

	If $\alpha_{0,1}\neq [0]^G$ (so $\emptyset\neq \Gamma_{0,1}^e\in\mc E_{0,1}^e$), and $\mc E_{0,1}^e\subsetneq\mc E_{0,1}$, 
	then we repeat the above constructions for any $\Gamma_{0,2}\in \mc E_{0,1}\setminus\mc E_{0,1}^e$ with all the subscripts $_{0}$ and $_{0,1}$ replaced by $_{0,1}$ and $_{0,2}$ respectively. 
	In particular, $\Gamma_{0,2}\sqcup (\Sigma_{0,1} \triangle \Gamma_{0,2}')$ is a minimal realization of $\alpha$, but it does not belong to $\mc W_{0,1}$ since $\Gamma_{0,2}$ does not admit an expanding $G$-neighborhood in $\interior(N_{1,0})$. 
	Hence, $\mc W_{0,2}$ will have one more element $\Gamma_{0,2}\sqcup (\Sigma_{0,1} \triangle \Gamma_{0,2}')$ than $\mc W_{0,1}$. 
	Together with our finiteness assumption, this procedure must stop in a finite number (say $m_0$) of times with either $\mc E_{0,m_0}^e=\mc E_{0,m_0}\neq \emptyset$ or $\alpha_{0,m_0}=[0]^G$. 
	In the latter case, Theorem \ref{Thm: infinite in trivial G-homology class} gives a contradiction as in {\bf Step 1}.  
	Suppose now $\alpha_{0,m_0}\neq [0]^G$ and  $\mc E_{0,m_0}=\mc E_{0,m_0}^e\neq \emptyset$, then we change all the subscripts $_{0,m_0}$ to $_1$ and move on to {\bf Step 3}.

	\medskip
	\noindent{\bf Step 3.} 
	Now, we have the following objects:
	\begin{itemize}
		\item[(1)] a compact $G$-manifold $N_1$ so that $\bd N_1$ is minimal with a contracting $G$-neighborhood, and $\interior(N_1)$ is $G$-equivariantly isometric to a $G$-domain $M_1$ in $M_0\subset M$; 
		\item[(2)] $\Gamma_{1}^e\subset \interior(N_{1})$ is a (non-trivial) minimal $G$-hypersurface of locally $G$-boundary-type with an expanding $G$-neighborhood; 
		\item[(3)] $\alpha_{1}=[ \Gamma_{1}^e]^G\neq [0]^G\in \mc H^G(M;\mb Z_2)$, and 
        $\wti\alpha_{1}=[ \Gamma_{1}^e]_{rel} + \mc B^G_{rel}(N_1, \bd N_1;\mb Z_2)$ is the relative $G$-homology class of $\Gamma_1^e$ in $\mc {LB}^G_{rel}(N_1,\bd N_1;\mb Z_2)$;
		\item[(4)] $\Sigma_{1}$ is an embedded minimal $G$-hypersurface in $M$ with $\alpha = \alpha_{1} + [\Sigma_{1}]^G$ and $\Sigma_{1}\cap M_1=\emptyset$;
		\item[(5)] $\mc E_{1}\neq \emptyset$ is the finite set of closed minimal realizations of $\wti\alpha_{1}$ supported in $\interior(N_1)$ so that every element of $\mc E_{1}$ has an expanding $G$-neighborhood; 
        \item[(6)] $\Gamma'\subset \bd M_1$ is a minimal $G$-hypersurface associated to $\Gamma\in \mc E_{1}$ so that $\Gamma'$ is isometric to the union of some $G$-components of $\bd N_{1}$, and $\Gamma \sqcup (\Sigma_{1}\triangle \Gamma')$ is a minimal realization of $\alpha$. 
	\end{itemize}

	\medskip
	\noindent{\bf Step 4. }{\it Cut along minimal $G$-hypersurfaces with a non-expanding $G$-neighborhood that are disjoint from some $\Gamma\in\mc E_1$.}
	
	Fix any $\Gamma_1 \in \mc E_1$ with $G$-components $\{\Gamma_1^{(i)}\}_{i\in \mc I}$. Let $\Gamma_1'\subset \bd M_1$ be the associated minimal $G$-hypersurface given by (6) in {\bf Step 3}.

    \medskip
    \noindent{\bf Step 4.a.}
    We apply the following cutting procedure with subscripts $_{1,i}$ along minimal $G$-hypersurfaces that are disjoint from $\Gamma_1$ with non-expanding $G$-neighborhoods. 
    Specifically,
	let 
	\begin{itemize}
		\item[(a)] $\mc N_1$ be the set of all $G$-connected minimal $G$-hypersurfaces $\Gamma \subset \interior(N_1)$ with a non-expanding $G$-neighborhood so that $\Gamma\cap \Gamma_1=\emptyset$; 
		\item[(b)] $\mc B_1:=\{\beta=[\Gamma]^G\in \wti{\mc H}^G(M;\mb Z_2):  \Gamma \in \mc N_1 \}$ (c.f. Definition \ref{Def: equivariant homology}). 
	\end{itemize}
	Suppose $\mc N_1\neq\emptyset$ (equivalently $\mc B_1\neq\emptyset$). Then, take any $\beta\in\mc B_1$ and $\Gamma\in\mc N_1$ with $\beta=[\Gamma]^G$. 
	
	{\bf Case 1.} $\Gamma \in \mc N_{1}$ is $(G,1)$-sided. 
	Then $\Gamma$ is disjoint from $\Gamma_1$ and has a contracting $G$-neighborhood.  
	Therefore, we can consider the metric completion $N_{1,1}$ of $N_1\setminus \Gamma$ as in Lemma \ref{Lem: G-cutting}, and define $M_{1,1}:=M_1\setminus\Gamma$, $\Gamma_{1,1}^e:=\Gamma_1\subset\interior(N_{1,1})$, $\alpha_{1,1}:=[\Gamma_{1,1}^e]^G\in \mc H^G(M;\mb Z_2)$, $\wti\alpha_{1,1}:=[\Gamma_{1,1}^e]_{rel}+ \mc B^G_{rel}(N_{1,1}, \bd N_{1,1};\mb Z_2)$, 
    $\Sigma_{1,1}:=\Sigma_1\triangle \Gamma_1'$, 
    so that (1)-(4) in {\bf Step 3} remain valid for the objects with subscripts $_{1,1}$. 
	Let $\mc E_{1,1}$ be defined as in {\bf Step 3} (5) with $\wti\alpha_{1,1},N_{1,1}$ in place of $\wti\alpha_1,N_1$ respectively. 
    Then the proof of Claim \ref{Claim: finite E} would carry over to show that $\mc E_{1,1}$ is finite, and every element of $\mc E_{1,1}$ has an associated minimal $G$-hypersurface satisfying the statement in {\bf Step 3} (6).
    Moreover, since $\Gamma$ is $(G,1)$-sided, one can identify $\interior(N_{1,1})$ as a $G$-domain in $N_1$ and verify that 
    \begin{align}\label{Eq: E_11 subset E_1}
        \mc E_{1,1}\subset \mc E_1,
    \end{align} 
    which implies that every element of $\mc E_{1,1}$ has an expanding $G$-neighborhood. 
    In particular, the statement in {\bf Step 3} (5) remains valid.
	We can also update $\mc N_{1,1}$ and $\mc B_{1,1}$ in the new $G$-manifold $N_{1,1}$ as in (a)(b) with $\Gamma_{1,1}^e$ in place of $\Gamma_1$. 
	Note that every realization $\widehat \Gamma\subset M$ of $\beta$ must intersect with the $(G,1)$-sided realization $\Gamma$ (Lemma \ref{Lem: intersect G1-sided}). 
	Hence, there is no minimal realization of $\beta$ in $\mc N_{1,1}$, which implies 
    \begin{align}\label{Eq: decrease B_1}
        \beta\notin\mc B_{1,1}\subsetneq \mc B_1. 
    \end{align}
	
	{\bf Case 2.} $\Gamma\in \mc N_{1}$ is $(G,2)$-sided non-$G$-separating in $N_1$ with a contracting $G$-neighborhood. 
	 Then, let $N_{1,1}$ be the metric completion of $N_{1}\setminus \Gamma$ as in Lemma \ref{Lem: G-cutting}, and take $M_{1,1}:=M_1\setminus\Gamma$, $\Gamma_{1,1}^e:=\Gamma_1\subset\interior(N_{1,1})$, $\alpha_{1,1}:=[\Gamma_{1,1}^e]^G\in \mc H^G(M;\mb Z_2)$, $\wti\alpha_{1,1}:=[\Gamma_{1,1}^e]_{rel}+ \mc B^G_{rel}(N_{1,1}, \bd N_{1,1};\mb Z_2)$,
     $\Sigma_{1,1}:=\Sigma_1\triangle \Gamma_1'$, 
     satisfying (1)-(4) in {\bf Step 3}. 
	 Also, 
	 we update $\mc E_{1,1}$, $\mc N_{1,1}$ and $\mc B_{1,1}$ similarly in the new $G$-manifold $N_{1,1}$ with $\wti\alpha_{1,1},\Gamma_{1,1}^e$ in place of $\wti\alpha_{1},\Gamma_{1}$ respectively. 
     By the proof of Claim \ref{Claim: finite E}, $\mc E_{1,1}$ is finite and the statement in {\bf Step 3} (6) remains valid for $\mc E_{1,1}$. 
     
     Additionally, if $\wti \Gamma \in \mc E_{1,1}$, then there exists an open $G$-set $\Omega\subset \interior (N_{1,1})$ so that $\llbracket \wti\Gamma \rrbracket + \llbracket \Gamma_{1,1}^e \rrbracket = \bd \Omega \llcorner \interior(N_{1,1}) $. 
     By identifying $\Omega$ as an open $G$-set in $\interior(N_1)$ and taking the closure $\closure(\Omega)$ in $N_1$, we see from the constructions that $\bd \closure(\Omega) \llcorner \interior (N_1)$ is either $\llbracket \Gamma_1\rrbracket + \llbracket \wti \Gamma \rrbracket$ or $\llbracket \Gamma_1\rrbracket + \llbracket \wti \Gamma \sqcup \Gamma \rrbracket$. 
     However, the latter case cannot occur since otherwise, $\wti \Gamma \sqcup \Gamma \in \mc E_1$ has a non-expanding $G$-neighborhood, which contradicts {\bf Step 3} (5).
     Therefore, by identifying $\interior(N_{1,1})$ as a $G$-domain in $N_1$, we see $\wti\Gamma\in \mc E_1$. 
     Thus, \eqref{Eq: E_11 subset E_1} remains valid, and every $\wti\Gamma\in\mc E_{1,1}$ is has an expanding $G$-neighborhood. 
     This verified {\bf Step 3} (5) for $\mc E_{1,1}$. 
     
     Moreover, one can also easily verify by the constructions that
     \begin{align}
        \mc B_{1,1}\subset \mc B_1,\qquad{\rm and}\qquad
         \beta\cap\mc N_{1,1} \subsetneq \beta\cap \mc N_1 \quad{\rm since~}\Gamma\notin\mc N_{1,1} .
     \end{align}
     Furthermore, if there exists $\widehat\Gamma\in \beta\cap\mc N_{1,1}$, 
	 then $\Gamma\cup \widehat\Gamma$ is a minimal realization of $[0]^G\in \mc H^G(M;\mb Z_2)$ and is disjoint from $\Gamma_{1,1}^e\cup\Sigma_{1,1}\in \alpha$. 
	 Thus, $(\Gamma\cup \widehat\Gamma)\cup (\Gamma_{1,1}^e\cup\Sigma_{1,1})$ is a new minimal realization of $\alpha$. 
	 By the assumption, there are only finitely many minimal realizations of $\beta$ in $\mc N_{1,1}$, i.e. $\#\beta\cap\mc N_{1,1}<\infty$. 

	{\bf Case 3.} $\Gamma \in \mc N_{1}$ is $(G,2)$-sided non-$G$-separating in $N_1$ with a mixed $G$-neighborhood. 
	Then by the last statement of Lemma \ref{Lem: G-cutting} with $\Gamma, \Gamma_1$ in place of $\Gamma,\Sigma$ respectively, we can cut $N_1$ along $\Gamma$ and some minimal $G$-hypersurfaces in $\interior(N_1)\setminus(\Gamma\cup\Gamma_1)$ to obtain a new $G$-manifold $N_{1,1}$ so that $\interior(N_{1,1})$ can be identified as a $G$-domain $M_{1,1}$ in $M_1\subset M$, and $\bd N_{1,1}$ is minimal with a contracting $G$-neighborhood. 
	Additionally, $\bd N_{1,1}$ has a $G$-component isometric to $\Gamma$, and each $\Gamma_1^{(i)}$ is contained either in $\interior(N_{1,1})$ or in $\interior( N_1)\setminus \closure( \interior(N_{1,1}))$. 
	Hence, let $\mc J\subset \mc I$ be the set of indices $j$ with $\Gamma_{1}^{(j)} \subset \interior (N_{1,1})$.  
    Denote by
	\begin{itemize}
		\item $\Gamma_{1,1}^e:= \cup_{j\in \mc J} \Gamma_{1}^{(j)}\subset \interior(N_{1,1})$ a minimal $G$-hypersurface of locally $G$-boundary-type with an expanding $G$-neighborhood; 
		\item $\alpha_{1,1}:=[ \cup_{j\in \mc J} \Gamma_{1,1}^{(j)}]^G\in \mc H^G(M;\mb Z_2)$, and $\wti\alpha_{1,1}:=[ \cup_{j\in \mc J} \Gamma_{1,1}^{(j)}]_{rel} + \mc B^G_{rel}(N_{1,1}, \bd N_{1,1};\mb Z_2)$;
		\item $\Sigma_{1,1}:=  (\Sigma_{1}\triangle \Gamma_1') \cup (\cup_{i\in \mc I \setminus\mc J } \Gamma_{1}^{(i)} )$ an embedded minimal $G$-hypersurface in $M$ with $\alpha = \alpha_{1,1} + [\Sigma_{1,1}]^G$ and $\Sigma_{1,1}\cap M_{1,1}=\emptyset$;
		\item $\mc E_{1,1}$ is the set of closed minimal realizations of $\wti\alpha_{1,1}$ supported in $\interior(N_{1,1})$.
	\end{itemize}
	If $\alpha_{1,1}\neq [0]^G$, then $\emptyset\neq\Gamma_{1,1}^e\subset \Gamma_1$ by the constructions. 
    Additionally, similar to {\bf Case 2}, every $\wti\Gamma \in \mc E_{1,1}$ can be unionized with $\Gamma_1\setminus\Gamma_{1,1}^e$ and some (possibly empty) minimal $G$-hypersurfaces (that we cut along in this step) to give an element in $\mc E_1$.
    Recall that we are cutting along minimal $G$-hypersurfaces with non-expanding $G$-neighborhoods, and elements in $\mc E_1$ all have expanding $G$-neighborhoods. 
    Hence, $\wti\Gamma\sqcup (\Gamma_1\setminus\Gamma_{1,1}^e) \in \mc E_{1,1}$. 
	In particular, $\mc E_{1,1}$ not only satisfies (5) and (6) in {\bf Step 3} with $\wti\alpha_{1,1},N_{1,1}$ in place of $\wti\alpha_1,N_1$ respectively, but also satisfies
    \[\#\mc E_{1,1}\leq \#\mc E_1. \]
	Define $\mc N_{1,1}$ and $\mc B_{1,1}$ as in (a)(b) with $\Gamma_{1,1}^e,N_{1,1}$ in place of $\Gamma_1,N_1$ respectively. 
	Similar to the above {\bf Case 2}, we also have $\mc B_{1,1}\subset\mc B_1$, $\beta\cap\mc N_{1,1}\subsetneq \beta\cap \mc N_1$, and $\#\beta\cap\mc N_{1,1}<\infty$. 

	{\bf Case 4.} $\Gamma\in \mc N_{1}$ is $G$-separating in $N_{1}$ with a contracting or mixed $G$-neighborhood. 
	By Lemma \ref{Lem: G-cutting}, let $N_{1,1}$ be a $G$-component of $N_{1}\setminus\Gamma$ so that $\bd N_{1,1}$ has a contracting $G$-neighborhood. 
	Then, we can similarly define $\Gamma_{1,1}^e$, $\alpha_{1,1}$, $\wti\alpha_{1,1}$, $\Sigma_{1,1}$, $\mc E_{1,1}$, $\mc N_{1,1}$, and $\mc B_{1,1}$ as in {\bf Case 3}. 
	Additionally, we know that either $ \alpha_{1,1}=[0]^G$, or every element in $\mc E_{1,1}\neq \emptyset$ has an expanding $G$-neighborhood and the statements in {\bf Step 3} (1)-(6) are satisfied with subscripts $_{1,1}$ in place of $_1$. 
	In the latter case, one can similarly verify that $\#\mc E_{1,1}\leq \#\mc E_1$, $\mc B_{1,1}\subset\mc B_1$, $\beta\cap\mc N_{1,1}\subsetneq \beta\cap \mc N_1$, and $\#\beta\cap\mc N_{1,1}<\infty$. 
	
	\medskip
    In any case, we can construct the objects in (1)-(6) and (a)-(b) with subscripts modified by $_{1,1}$. 
    We also mention that $\Gamma_{1,1}^e$ is a part of $\Gamma_1$. 
    
    Suppose we have constructed the objects with subscript $_{1,i}$ for some $i\geq 1$. 
	As long as $\alpha_{1,i}\neq [0]^G$ and $\mc N_{1,i}\neq \emptyset$, we can repeat the above constructions for an element $\Gamma$ in $\mc N_{1,i}$ by using $\Gamma_{1,i}^e$ in place of $\Gamma_1$, which gives the objects in (1)-(6) and (a)-(b) with subscripts $_{1,i+1}$. 
	Moreover, in each step, if $\Gamma\in \mc N_{1,i}$ is $(G,1)$-sided, then $\#\mc B_{1,i+1}$ will strictly decrease by at least $1$ due to \eqref{Eq: decrease B_1}. 
	If $\Gamma\in \mc N_{1,i}$ is $(G,2)$-sided, then for $\beta=[\Gamma]^G\in \mc B_{1,i}$, we see from the constructions in {\bf Case 2-4} that $\beta\cap\mc N_{1,i+1}\subsetneq \beta\cap \mc N_{1,i}$ and $\#\beta\cap\mc N_{1,i+1}<\infty$, 
    which implies that $\beta$ will be eliminated from $\mc B_{1,i}$ in finite steps. 
    Note that $\mc H^G(M;\mb Z_2)$ is finite (Lemma \ref{Lem: finite G-homology class}) and there are only finitely many $(G,1)$-sided minimal $G$-hypersurfaces in $M$ by the assumption. 
    Hence, combined with $\mc B_{1,j}\subset\mc B_{1,i}$ for $j>i$, this procedure must stop in finitely many (say $m_1$) steps so that either $\alpha_{1,m_1}= [0]^G$ or $\mc N_{1,m_1}=\emptyset$. 

    \medskip
    \noindent{\bf Step 4.b.}
	If $\alpha_{1,m_1}\neq [0]^G$ and $\mc N_{1,m_1}=\emptyset$. 
    Then by the constructions, $\emptyset\neq\Gamma_{1,m_1}^e\subset \Gamma_1$, every element in $\mc E_{1,m_1}\neq\emptyset$ has an expanding $G$-neighborhood, and $\#\mc E_{1,m_1} \leq\#\mc E_1$. 
	Hence, we can pick a new element $\Gamma_{2}\neq \Gamma_{1,m_1}^e\in \mc E_{1,m_1}$ (if it exists) and apply the constructions in {\bf Step 4.a} to $\Gamma_2$ in $N_{1,m_1}$. 
	By the same procedure, we obtain new objects with subscript $_{2,m_2}$ corresponding to those in (1)-(6) and (a)-(b) 
	so that either $\alpha_{2,m_2}=[0]^G$ or $\mc N_{2,m_2}=\emptyset$. 
	We can repeat for a finite number ($\leq\#\mc E_{1}$) of times until we obtain objects with subscripts $_{k,m_k}$ so that either
	\begin{itemize}
		\item $\alpha_{k,m_k}=[0]^G$, or
		\item for any $\Sigma\in\mc E_{k,m_k}\neq \emptyset$, $\Sigma$ has an expanding $G$-neighborhood, and there is no minimal $G$-hypersurface $\Gamma\subset \interior(N_{k,m_k})$ with a non-expanding $G$-neighborhood satisfying $\Sigma\cap\Gamma=\emptyset$. 
	\end{itemize}
	For the case that $\alpha_{k,m_k}=[0]^G$, Theorem \ref{Thm: infinite in trivial G-homology class} gives a contradiction as in {\bf Step 1}.  
	Suppose next that $\alpha_{k,m_k}\neq [0]^G$, and we can make the following claim. 
	
	\begin{claim}\label{Claim: G-Frankel}
		If $\alpha_{k,m_k}\neq[0]^G$, then for any $\Sigma\in\mc E_{k,m_k}$ and minimal $G$-hypersurface $\Gamma\subset \interior(N_{k,m_k})$, 
		\begin{itemize}
			\item[(i)] $\Sigma\cap\Gamma \neq \emptyset$;
			\item[(ii)] $\Sigma$ is $G$-connected;
			\item[(iii)] $\Sigma$ is $G$-separating in $N_{k,m_k}$ provided that it is $(G,2)$-sided. 
		\end{itemize}
	\end{claim}
	\begin{proof}
		Suppose that there exists a $G$-connected minimal $G$-hypersurface $\Gamma\subset \interior(N_{k,m_k})$ which dose not intersect with $\Sigma\in\mc E_{k,m_k}$. 
		Then, by the construction of $N_{k,m_k}$, $\Gamma\cup\Sigma$ has an expanding neighborhood, and we can cut $N_{k,m_k}$ along $\Gamma\cup\Sigma$ to have a new $G$-manifold $N'$ so that $\bd N'$ has two new $G$-components $\Gamma',\Sigma'$ coming from $\Gamma$ and a $G$-component of $\Sigma$ respectively. 
		 By minimizing the area in the $G$-homology class of $\Sigma'$ in $N'$, we obtain a minimal $G$-hypersurface $S\subset N'$ with a contracting $G$-neighborhood.
		 In particular, since $\Gamma'$ and $\Sigma'$ have expanding $G$-neighborhoods, $S$ has $G$-components in $\interior(N')\subset \interior(N_{k,m_k})$ that are disjoint from $\Sigma$ but have a contracting $G$-neighborhood. 
		 This contradicts the assumption on $N_{k,m_k}$. 
		 A similar argument also shows the desired results (ii) and (iii) of $\Sigma$. 
		 %
	\end{proof}
	
	After modifying the sub-scripts $_{k,m_k}$ to $_2$, we move on to the final {\bf Step 5}.
	
	\medskip
	\noindent{\bf Step 5.}
	In summary, we have the following objects:
	\begin{itemize}
		\item[(1')] a compact $G$-manifold $N_2$ so that $\bd N_2$ is minimal with a contracting $G$-neighborhood, and $\interior(N_2)$ is $G$-equivariantly isometric to a $G$-domain $M_2$ in $M$; 
		\item[(2')] $\Gamma_{2}^e\subset \interior(N_{1})$ is a non-trivial minimal $G$-hypersurface of locally $G$-boundary-type with an expanding $G$-neighborhood; 
		\item[(3')] $\alpha_{2}:=[ \Gamma_{2}^e]^G\neq [0]^G \in \mc H^G(M;\mb Z_2)$, and 
        $\wti\alpha_{2}:=[ \Gamma_{2}^e]_{rel}+\mc B^G_{rel}(N_2,\bd N_2;\mb Z_2)$;
		\item[(4')] $\Sigma_{2}$ is an embedded minimal $G$-hypersurface in $M$ with $\alpha = \alpha_{2} + [\Sigma_{2}]^G$ and $\Sigma_{2}\cap M_2=\emptyset$;
		\item[(5')] $\mc E_{2}$ is the finite set of closed minimal realizations of $\wti\alpha_{2}$ supported in $\interior(N_2)$ so that every element in $\mc E_2$ is $G$-connected with an expanding $G$-neighborhood and intersects with any other minimal $G$-hypersurface in $\interior(N_2)$ (by Claim \ref{Claim: G-Frankel}). 
        \item[(6')] $\Sigma'\subset \bd M_2$ is a closed minimal $G$-hypersurface associated to $\Sigma\in \mc E_{2}$ so that $\Sigma'$ is isometric to a part of $\bd N_{2}$, and $\Sigma \sqcup (\Sigma_{2}\triangle \Sigma')$ is a minimal realization of $\alpha$.
	\end{itemize}
	
	If $\bd N_2=\emptyset$, i.e. $N_2=M$ and $\wti\alpha_2=\alpha\neq [0]^G$. 
    Then every min-max $G$-width $\omega_p(\alpha)$ is realized by the area of a disjoint union of $G$-connected minimal $G$-hypersurfaces $\wti\Sigma_p=\cup_{i=1}^{l_p} \wti\Sigma_p^{(i)}$ with integer multiplicity $m_p^{(i)}\in\mb N$ on each $\wti\Sigma_p^{(i)}$. 
    By Proposition \ref{Prop: lift group action}, one can apply the proof of \cite{lwy24}*{Proposition 5.4} to show that $\wti\Sigma_p$ contains a minimal realization of $\alpha$ (see also Theorem \ref{Thm: confined min-max hypersurface}(ii)).
    By the Frankel-type property in $\interior (N_2)=M$ (Claim \ref{Claim: G-Frankel}), we conclude that $l_p=1$, and $\wti\Sigma_p$ is a $G$-connected minimal realization of $\alpha$ so that $\omega_p(\alpha)=m_p\Area(\wti\Sigma_p)$.  
	Combined with the Weyl Law for equivariant volume spectrum (\cite{wang2025density}*{Theorem 1.5} or \eqref{Eq: Weyl law}), the proof of \cite{marques2017existence}*{Theorem 6.1 and 1.1} can be taken almost verbatim with $\alpha$ in place of $\mc Z_n(M;\mb Z_2)$ to show that there are infinitely many $G$-connected minimal realizations of $\alpha$. 
	This gives a contradiction. 
	
	Next, suppose that $\bd N_2\neq\emptyset$. If there exists $\Sigma\in \mc E_2$ that is $(G,2)$-sided, then by Claim \ref{Claim: G-Frankel}, $\Sigma$ is $G$-connected and $G$-separating in $N_2$. 
    Thus, there exists an open $G$-set $\Omega\subset \interior(N_2)$ so that $\bd\Omega\cap \interior(N_2)=\Sigma$. 
    After identifying $\Omega$ as a $G$-domain in $M_2\subset M$ and taking the closure $\closure(\Omega)$ in $M$, we have a minimal $G$-hypersurface $\Sigma''\subset \bd \closure(M_2)$ with $\Sigma\sqcup\Sigma''= \bd\closure(\Omega)$ so that $\Sigma''$ is isometric to the union of some $G$-components of $\bd N_2$. 
    In particular, we have $[\Sigma]^G=[\Sigma'']^G\in \mc H^G(M;\mb Z_2)$. 
    Combined with $\alpha=[\Sigma]^G+[\Sigma_2\triangle \Sigma']^G$ in (6'), we know $\alpha=[\Sigma'']^G+[\Sigma_2\triangle\Sigma']^G$, and $\Sigma_2':=\Sigma''\triangle(\Sigma_2\triangle\Sigma')$ is a minimal realization of $\alpha$ contained in $ M\setminus M_2$. 
    However, by Theorem \ref{Thm: infinite in trivial G-homology class}, there are infinitely many distinct minimal realizations $\Gamma$ of $[0]^G\in \mc H^G(M;\mb Z_2)$ that are supported in $\closure(M_2)$, which gives infinitely many distinct minimal realizations $\Sigma_2'\triangle\Gamma$ of $\alpha$ as a contradiction. 
    
    Now we know that every $\Sigma\in \mc E_2$ is $(G,1)$-sided.
    Let $\{\widehat\Sigma_{i}\}_{i=1}^m$ be the $G$-components of $\bd N_2$. 
	Then by the proof of \cite{song2023infinite}*{Lemma 13}, every minimal $G$-hypersurface $\Sigma\in\mc E_2$ satisfies that 
	\begin{itemize}
		\item $2\Area(\Sigma) > \sup_{i\in\{1,\dots,m\}} \Area(\widehat\Sigma_i)$ (as $\Sigma$ is $(G,1)$-sided). 
	\end{itemize}
	Consider the non-compact $G$-manifold $Cyl(N_2)$ with cylindrical ends associated to $N_2$, and the $G$-homology class $[\Gamma_2^e]^G_c$ with compact support. By the above reasoning, $\Gamma^e_2$ is not $G$-separating in $N_2$.
	It follows from the Frankel-type property in $\interior(N_2)$ (Claim \ref{Claim: G-Frankel}) and Theorem \ref{Thm: confined min-max hypersurface}(ii) that $\omega_p([\Gamma_2^e]^G_c)$ is realized by the area of a $G$-connected minimal realization $\wti\Sigma_p\in\mc E_2$ of $\wti\alpha_2$ compactly supported in $\interior(N_2)$ with odd integer multiplicity $m_p=2k_p+1\in\mb N$, i.e. 
	\[ \omega_p([\Gamma_2^e]^G_c) = m_p \Area(\wti\Sigma_p), \]
	for any $p\in\mb N$. 
	Together with the finiteness of $\mc E_2$ and Theorem \ref{Thm: width estimates in cylindrical mfd}, we have a contradiction to the arithmetic result \cite{song2023infinite}*{Lemma 14}. 
\end{proof}

\bibliographystyle{abbrv}

\bibliography{reference.bib}

\end{document}